  \pgfplotsset{compat=newest}
\newlength\figureheight
  \newlength\figurewidth
\pgfplotsset{%
    tick label style={font=\scriptsize},
    label style={font=\footnotesize},
    legend style={font=\footnotesize},
         every axis plot/.append style={very thick}
}
\newcommand{\vb}{\vspace{2.2mm}}
\renewcommand{\hat}{\widehat}
\newcommand{\Wopt}{W^{\text{\sc opt}}}
\newcommand{\Iopt}{I^{\text{\sc opt}}}
\newcommand{\Wsvf}{W^{\text{\svf}}}
\newcommand{\Isvf}{I^{\text{\svf}}}
\newcommand{\bern}{J}
\newcommand{\stp}[1]{X_{#1}}
\newcommand{\stpn}[1]{X_{n,#1}}
\renewcommand{\leq}{\leqslant}
\renewcommand{\geq}{\geqslant}
\renewcommand{\vec}[1]{\boldsymbol{#1}}
\newcommand{\sti}[2]{B_{#2,#1}}
\newcommand{\stv}[1]{\vec{B_{#1}}}
\newcommand{\sdev}[2]{\sigma_{#2,#1}}
\newcommand{\mean}[2]{\mu_{#2,#1}}
\renewcommand{\fnum@figure}[1]{\textbf{\figurename~\thefigure}. }
\renewcommand{\fnum@table}[1]{\textbf{\tablename~\thetable}. }
 \newcommand{\todo}[1]{}
 \newcommand{\nnote}[1]{}
 \newcommand{\mnote}[1]{}
 \newcommand{\manote}[1]{}
\newcommand{\todo}[1]{{\color{red!50!black}TODO: #1}}
\newcommand{\nnote}[1]{{\color{blue!50!black}Neil: #1}}
\newcommand{\mnote}[1]{{\color{blue!50!black}Michel: #1}}
\newcommand{\manote}[1]{{\color{blue!50!black}Madelon: #1}}
\title[The smallest-variance-first rule  in appointment sequencing]{Performance of the smallest-variance-first rule \\in appointment sequencing}
\author{Madelon A.\ de Kemp, Michel Mandjes, Neil Olver}
\theoremstyle{plain}                    
\newtheorem{theorem}{Theorem}[section]
\newtheorem{lemma}[theorem]{Lemma}
\newtheorem{proposition}[theorem]{Proposition}
\newtheorem{assumption}[theorem]{Assumption}
\theoremstyle{definition}
\newtheorem{remark}[theorem]{Remark}
\newtheorem{definition}[theorem]{Definition}
\newtheorem{example}[theorem]{Example}
\newenvironment{proofof}[1]{\begin{proof}[Proof of #1]}{\end{proof}}
\newcommand{\thickhline}{%
    \noalign {\ifnum 0=`}\fi \hrule height 1.5pt
    \futurelet \reserved@a \@xhline
}
\theoremstyle{plain}
\newenvironment{customlemma}[1]
  {\innercustomlemma}
  {\endinnercustomlemma}
\newenvironment{customprop}[1]
  {\innercustomprop}
  {\endinnercustomprop}
\newcommand{\thistheoremname}{}
\newtheorem{genericthm}[theorem]{\thistheoremname}
\newcommand{\R}{\mathbb{R}} 
\newcommand{\N}{\mathbb{N}} 
\newcommand{\E}{\mathbb{E}} 
\newcommand{\Var}{\textnormal{Var}} 
\newcommand{\p}{\mathbb{P}} 
\newcommand{\dis}{\stackrel{\text{\rm d}}{=}} 
\newcommand{\cleq}{\leq_{\text{\rm cx}}} 
\newcommand{\dleq}{\leq_{\text{\rm dil}}} 
\newcommand{\e}{\mathrm{e}} 
\newcommand{\Id}{\textnormal{id}} 
\newcommand{\sym}{\mathsf{S}}
\newcommand{\svf}{{\sc svf}}
\newcommand{\Obj}[3]{C_{#3}(#1,#2)}
\newcommand{\ObjB}[4]{C_{#4}(#1,#2,#3)}
\begin{document}

\maketitle
\setstretch{1.55}

\noindent
{\sc Abstract.}
A classical problem in appointment scheduling, with applications in health care, concerns the determination of the patients' arrival times that minimize a cost function that is a weighted sum of mean waiting times and mean idle times. 
One aspect of this problem is the \emph{sequencing problem}, which focuses on ordering the patients. 
We assess the performance of the \emph{smallest-variance-first} (\svf{}) rule, which sequences patients in order of increasing variance of their service durations.
While it is known that \svf{} is not always optimal, it has been widely observed that it performs well in practice and simulation.
We provide a theoretical justification for this observation by proving, in various settings, quantitative \emph{worst-case} bounds on the ratio between the cost incurred by the \svf{} rule and the minimum attainable cost. 
We also show that, in great generality,  \svf{} is asymptotically optimal, i.e., the ratio approaches 1 as the number of patients grows large.
While evaluating policies by considering an approximation ratio is a standard approach in many algorithmic settings, 
our results appear to be the first of this type in the appointment scheduling literature.

\vb

\noindent
{\sc Subject classification.} Health care: appointment scheduling. Scheduling: stochastic appointment sequencing. Optimization: approximation algorithm.

\noindent
{\sc Area of review.} Stochastic Models.

\vb

\noindent
{\sc Affiliations.} M.\ de Kemp and M.\ Mandjes are with Korteweg-de Vries Institute, University of Amsterdam. N.\ Olver is with the Department of Econometrics \&{} Operations Research, Vrije Universiteit Amsterdam, and is also affiliated with CWI, Amsterdam.
The research for this paper is partly funded by the NWO Gravitation Programme \textsc{Networks}, Grant Number 024.002.003 (de Kemp, Mandjes) and NWO Vidi grant 016.Vidi.189.087 (Olver).

\newpage

\section{Introduction}

Setting up appointment schedules plays an important role in health care and various other domains. The main challenge lies in efficiently running the system, but at the same time providing the customers an acceptable level of service. The service level can be expressed in terms of the waiting times the customers are facing, and the system efficiency in terms of the service provider's idle time. The problem of generating an optimal schedule is generally formulated as minimizing a cost function (or simply ``cost'') that is a weighted average of the expected idle time and the expected waiting times. As most literature on this topic focuses on applications in health care, we refer throughout this paper to customers as {patients}, and to the server as the doctor.

The problem of scheduling appointments can be split into two parts: one needs to determine the amount of time scheduled for each appointment, and one needs to determine in which order the patients should arrive. These problems are usually referred to as the \emph{scheduling} problem and \emph{sequencing} problem, respectively.
This paper will primarily focus on the sequencing problem (but also includes results on the combined sequencing and scheduling problem), in a context with a single doctor seeing a sequence of patients. We impose the common assumptions that the service times of the patients form a sequence of independent random variables, while they arrive punctually at the scheduled times (which we will refer to as \emph{epochs}). 
In this setting, a variety of techniques is available that determines for a given order the optimal arrival epochs; see, e.g., \cite{Begen, Kuiper} and references therein. 
However, much less is known about the efficient  computation of  ``good'' sequences. Already for a relatively modest number of patients, the number of possible sequences is huge, thus seriously complicating the search for an optimal order. An appointment scheduling review paper from 2017~\cite{Ahmadi} states that the optimal sequencing problem is one of the main open problems in the area:
\begin{quotation}
``[...] one of the biggest challenges for future research is to find optimal (or near-optimal) solutions to more realistic appointment sequencing problems.''
\end{quotation}

A number of papers consider the sequencing (or combined sequencing and scheduling) problem and develop various stochastic programming models for it \cite{Berg,DentonGupta,Mak,Mancilla}.
However, the resulting optimization problems are very difficult to solve.
Variants of the problem have been shown to be NP-hard \cite{Kong,Mancilla}, indicating that this difficulty is inherent.

In a popular alternative approach one considers simple heuristics for the sequencing problem.
The most frequently used heuristic is to order the patients by the variance of their service times, from smallest to largest. 
Throughout this paper we refer to this sequence as the \svf{} (smallest-variance-first) sequence.
The intuition for using the \svf{} sequence is that an unusually long service time in the beginning could cause many later patients to have to wait, and the \svf{} sequence aims to reduce the risk of this occurring. 
An additional appeal lies in the fact that \svf{} is simple to implement, as it only requires knowledge of the variances of the service times.
Simulation experiments have revealed that the \svf{} rule typically performs remarkably well. 
In some cases it can be formally shown to be optimal;
for instance (imposing some distributional assumptions) in the case of two patients \cite{Gupta,Weiss}.
Recently, however, Kong et al.~\cite{Kong} provided instances showing  \svf{}  need not be optimal, even for cases involving  relatively simple service times (e.g., uniform or lognormal).
Despite the \svf{} sequence appearing promising in simulations, little is known about its theoretical performance, or of any other simple heuristic for that matter.

In this paper, we propose a new direction of research for the sequencing problem:
\emph{finding sequences that provably perform well}. 
Instead of finding an optimal sequence, such research aims at finding performance bounds on easily-computed sequences. Considering previous research, the \svf{} sequence is the obvious candidate for such an easily-computed and well-performing sequence, and will therefore be our focus.
The precise quantity of interest to us will be the \emph{ratio} of the cost of the schedule coming from the \svf{} sequence, and the cost of the schedule coming from the optimal sequence.

Our main goal in this paper is to prove upper bounds on this ratio -- known as the \emph{approximation ratio} -- in various settings. 
This direction of study is very standard in the algorithmic community when considering intractable (NP-hard) problems, for example in machine scheduling (see \cite{GuptaK,GuptaM,Skutella} and references therein).
However, it has not been studied in the appointment sequencing context.
Note that for \emph{typical} problem instances the \svf{} sequence could perform significantly better than suggested by an upper bound on the approximation ratio, as the bound must also hold for \emph{worst-case} instances.

\subsection{Main contributions}
In the first part of the paper we concentrate exclusively on the effect of the sequence, using the simplest choice of schedule:  
each patient is assigned a slot of length equal to its mean service time.
In other words, the arrival time of any patient is set equal to the sum of the  mean service times of all preceding patients.
This is certainly not the optimal solution to the scheduling problem, in the sense of minimizing the cost function introduced above,
but it has the advantage of being very simple and easily applicable, and also completely independent of the choice of tradeoff in the cost function between doctor idle time and patient waiting time. Owing to these attractive properties, this ``mean-based'' type of schedule is a commonly used approach in practice, as 
was stated in, e.g., the survey paper \cite{Ahmadi} and in \cite{Erdogan}.

We start, in Section~\ref{sec:model}, by arguing that without any restrictions on the service-time distributions, no bound on the approximation ratio of \svf{} is possible, both under mean-based schedules and under optimally-spaced schedules (i.e., schedules in which the arrival epochs are chosen so as to minimize the cost function). 
We do so by constructing an example involving only two patients, but in which the service-time distributions are rather artificial.
Then we present the ordering assumption on the service-time distributions that will be required for most of our results.
Importantly, a large class of distributions meets this  assumption, including e.g.\ the exponential distribution, but also the lognormal distributions that are frequently used in the health care context (such as the ones identified in \cite{Cayirli}).

For the mean-based scheduling rule, we have performed an extensive set of numerical experiments for exponential  and lognormal service time distributions, to gain  insight into the performance of the \svf{} rule for typical problem instances. These experiments indicate that the \svf{} sequence performs very well for these distributions, with approximation ratios being below 1.01. However, an example at the start of Section 3 indicates that there are instances that are covered by the ordering assumption where the approximation ratio is 1.52, implying that it is not possible to prove a better bound than 1.52 if this assumption is in place.

Section~\ref{sec:bounds} focuses on the mean-based scheduling rule. Under the ordering assumption we prove that the approximation ratio of \svf{} is at most 2 for symmetric service-time distributions, and at most 4 in general. In other words, we show that \emph{for all instances} (i.e., for all numbers of patients and all service-time distributions satisfying the assumption imposed) the \svf{} cost is at most  four times the optimal cost. We also consider two special cases:
\begin{itemize}
	\item Service times are evidently nonnegative, but one could consider the situation that normal distributions are used as an approximation of the actual distributions of service times. In Section~\ref{normal}, we prove that then the approximation ratio is at most $4(\sqrt{2}-1)\approx 1.6569$. While we do not believe that our result here is sharp, it indicates that the performance of \svf{} for well-behaved service-time distributions is most likely substantially better than suggested by the bounds 2 and 4 mentioned above.
	\item In Section~\ref{lognormal} we bridge the gap between the upper bound of 2 for symmetric distributions and the general upper bound of 4, by developing a method that isolates the effect of asymmetry. For the lognormal distributions fitted to real data in \cite{Cayirli}, this method results in an approximation ratio of at most 3.43.
\end{itemize}

The problem of finding the optimal sequence becomes increasingly difficult as the number of patients grows larger, and thus the need for considering heuristics such as the \svf{} rule is more important when a substantial number of patients is involved.
In Section~\ref{sec:asymptotic}, we show for mean-based schedules that as the number of patients grows large, the approximation ratio tends to 1.  This result requires only a very weak assumption on the service-time distributions (the ordering assumption is not needed here). 
The important practical implication of this result is that \svf{} is close to optimal in settings where the number of patients is substantial.

In Section~\ref{sec:combined} we shift our attention to the performance of \svf{} when optimally-spaced schedules (i.e., with arrival epochs minimizing the cost function),  rather than mean-based schedules, are being used. 
{In this setting, our numerical experiments indicate that the \svf{} sequence tends to be optimal for exponential and lognormal service time distributions; we do however present an example satisfying the ordering assumption in which \svf{} is no longer optimal.}

We proceed by proving bounds on the approximation ratio of the \svf{} rule for the combined sequencing and scheduling problem.
Here, we wish to compare a heuristic for this combined problem to the overall optimal schedule, over all possible sequences and schedules. 
Observe that the simple mean-based scheduling rule may in general lead to high cost, because waiting times could easily propagate. 
We therefore consider a simple alternative scheduling rule, suggested by Charnetski~\cite{Charnetski}: the slot assigned to a patient is equal to its mean service time, plus some multiple $\alpha$ of the standard deviation of its service time (where this $\alpha$ is optimized).
Again under some assumptions, we show that this scheduling rule, combined with the \svf{} sequencing rule, yields a cost that is (relative to the optimal cost) off by at  most a constant factor.
Because of its frequent use in health care contexts \cite{Cayirli,Klassen}, we pay special attention to 
the case of lognormally distributed service times. 
Using a slightly different scheduling heuristic (the interarrival time being a multiple of the mean service time), we find an upper bound on the approximation ratio.
Applying this result to the data in \c{C}ay{\i}rl{\i} et al.~\cite{Cayirli}, we find an upper bound of 2.90 in the case that in the cost function the waiting and idle times are equally important.

We finish Section~\ref{sec:combined} by giving an example to demonstrate that for optimally-spaced schedules the \svf{} sequence is no longer asymptotically optimal as the number of patients tends to infinity, in contrast to the result for mean-based schedules that was presented in Section~\ref{sec:asymptotic}. 

\subsection{Further related work}
We proceed by providing an account of the related literature, without attempting to give a full overview; for more extensive reviews on the appointment scheduling and sequencing literature, we refer the reader to, e.g., Ahmadi-Javid et al.~\cite{Ahmadi}, \c{C}ay{\i}rl{\i} and Veral~\cite{CayirliReview}, and Gupta and Denton~\cite{GuptaReview}.

As we mentioned, Kong et al.~\cite{Kong} gave examples showing that  \svf{} is not in general optimal.
In some very specific cases, optimality of \svf{} has been demonstrated.
For only two patients, the \svf{} sequence is optimal when the service times are both exponentially distributed or both uniformly distributed \cite{Weiss}, or more generally, 
when the two service times are comparable according to a certain convex ordering \cite{Gupta}.
For three patients, Kong et al.~\cite{Kong} find sufficient conditions for the \svf{} sequence to be optimal, when the time scheduled for each appointment is equal to the mean service time. 
(We have verified that this result can be extended to four patients using the same methods.)

Kemper et al.~\cite{Kemper} analyze a sequential optimization approach, meaning that the arrival time of a patient is optimized without taking into account its impact on later patients.
They show that under this rather different notion of optimality, and if the service times come from the same scale-family, then \svf{} does provide the best ordering.

One line of research focuses on comparing various sequencing heuristics (including \svf{}) through simulation.
Denton et al.~\cite{Denton} consider a model similar to ours, and discuss the effectiveness of a number of simple sequencing heuristics using simulation, based on real surgery data.
The \svf{} heuristic performed best of all the heuristics they considered.
Mak et al.~\cite{Mak} consider a model where waiting time costs may be different for different patients; by relying on tractable approximations, they also find that \svf{} performs well.
Klassen and Rohleder~\cite{Klassen} and Rohleder and Klassen~\cite{Rohleder} consider an appointment scheduling model where not all patient information is known in advance; rather, patients must be scheduled as they call in to make an appointment (and so without information about patients who call later).
Once again, it was empirically found that it worked best to put patients with low-variance service times early in the schedule. 

A number of papers model variants of the combined sequencing and scheduling problem as stochastic integer or linear programming problems.
Solving these programs is very challenging however, and generally exact results were only obtained for small instances.
Works along these lines include Denton and Gupta~\cite{DentonGupta}, Mancilla and Storer~\cite{Mancilla} and Berg et al.~\cite{Berg}.
For larger instances, it was necessary to resort to heuristics such as \svf{} for the sequencing problem.
We mention Vanden Bosch and Dietz~\cite{VandenBosch} who propose instead a local search heuristic to iteratively improve the sequence by finding pairs of patients who can be swapped to reduce the cost. 

There are also a number of papers which take a robust optimization approach \cite{Kong2013,Mak2,Mittal}.
Here, instead of working with explicitly given service-time distributions, the goal is to find a schedule minimizing the worst-case expected cost given only that the distributions meet certain constraints (such as certain given moments). The main advantage of robust optimization is that only the constraints are needed rather than full distributional information.
Note the difference with our approach: robust optimization minimizes the \emph{cost} of solution for the worst-case distributions satisfying the constraints, while our approach bounds the \emph{approximation ratio}. The bound on the approximation ratio indicates the performance compared to optimal for any problem instance. In contrast, a robust optimization solution might not be anywhere near optimal for a typical problem instance, but will be good for the worst-case instances. Robust optimization results and bounds on approximation ratios could thus be seen as complementary results.

Among the works on robust optimization, that by Mak et al.~\cite{Mak2} is most relevant to us, as they prove (under mild assumptions) that in their framework  \svf{} is optimal. 
In their setup, the joint distribution of the service times could be any distribution matching known moments for individual service times (e.g., means and variances).
However, the worst-case distributions corresponding to the optimal interarrival times are typically highly correlated; these results do not carry over to a setup in which independence is assumed.
Kong et al.~\cite{Kong2013} consider a model in which not only the means and variances but also the covariances are specified. However, only the scheduling problem is discussed, and sequencing is not taken into account.
Mittal et al.~\cite{Mittal} discuss another robust model, in which each service time can take any value in a given interval. 
They observe that a $(1+\epsilon)$-approximation algorithm for the combined scheduling and sequencing problem can be obtained, for any $\epsilon > 0$.

Finally, we would like to point out the relation with the area of machine scheduling (see the book by Pinedo~\cite{Pinedo} for more background). The main difference between machine scheduling and appointment scheduling is that in the former the arrival times of the jobs (being the patients in our setting) are given, while in the latter these are decision variables. The machine scheduling problem that is most closely related to the setup that we consider, can be found in Guda et al.~\cite{Guda}. In the problem considered there,  the due dates and sequence of jobs are to be decided, in order to minimize a weighted average of expected earliness and tardiness around the due dates. It is shown that the \svf{} rule is optimal, under a specific assumption on the service times of the jobs. It should be noted, though, that in the model considered in Guda et al.~\cite{Guda} all jobs are present from the start, implying that there is no idle time. Compared to our model, this greatly simplifies the evaluation of the cost function, thus facilitating finding an optimal solution.


\section{Model and preliminaries}\label{sec:model}

Throughout this paper we consider a problem instance with $n$ patients, numbered 1 up to $n$. We denote the service time of patient $i$ in this problem instance by $B_{i}$, which has mean $\mu_{i}$ and variance $\sigma_{i}^2$; we assume that $B_1,\ldots, B_n$ are independent random variables. 

As pointed out in the introduction, one should distinguish between the scheduling problem and the sequencing problem. The sequencing problem, on which we primarily focus, is to decide which patient is assigned which appointment slot (given a certain scheduling rule). The sequence is denoted by a permutation $\tau \in \sym_n$ (where $\sym_n$ denotes the set of all permutations on $\{1,\dots,n\}$).
The value $\tau(i)$ will denote the index of the patient that is assigned to appointment slot $i$. The scheduling problem is to decide the interarrival times between patients, given the sequence in which they arrive. We use $x_{j}$ to denote the interarrival time between patient $j$ and the next patient, i.e., the length of the appointment slot reserved for patient $j$. 
The vector $\bm{x} = (x_1, x_2, \ldots, x_n)$ will be referred to as the schedule. 

\subsection{Performance metrics}
We proceed by introducing the cost function we work with in this paper, which is based on the patients' waiting times and the doctor's idle times. Let $W_{i}$ denote the waiting time of the patient in appointment slot $i$. 
Let $I_{i}$ be the idle time before the start of appointment slot $i$ after the previous patient has been served. 
Given a sequence $\tau$ and interarrival times $x_{i}$, the waiting times and idle times satisfy the Lindley recursions~\cite{Lindley}:
\begin{align}\label{LindleyModel}
    W_{i+1}=(W_{i}+B_{\tau(i)}-x_{\tau(i)})^+,\:\:\:\:\: I_{i+1}=(W_{i}+B_{\tau(i)}-x_{\tau(i)})^-,
\end{align}
where we use  the compact notation $x^+ := \max\{0,x\}$ and $x^- := \max\{0,-x\}$. 

We use a parameter $\omega\in(0,1)$ to indicate the relative importance of idle time and waiting time. As a cost function, we seek to minimize
\begin{equation}\label{objective}
    \Obj{\tau}{\bm{x}}{\omega} := \omega \sum_{i=1}^n \E I_{i} +(1-\omega)\sum_{i=1}^n \E W_{i},
\end{equation}
a weighted average of the expected total idle time and expected total waiting time. 
Observe that this cost function depends on the sequence $\tau$, on the schedule $\bm{x}$, and on the patient service-time distributions $\vec{B} = (B_1, \ldots, B_n)$.
We generally suppress the dependence on $\vec{B}$, but we may write $\ObjB{\vec{B}}{\tau}{\bm{x}}{\omega}$ if we wish to be explicit.
As an aside, we mention that an approach to estimate $\omega$ in a practical context can be found in \cite{Robinson}.

Throughout this paper, without loss of generality we let the patients be indexed such that $\sigma_{1}^2\leq \sigma_{2}^2\leq \dots \leq \sigma_{n}^2$. 
The \svf{} sequence is then the sequence given by the identity permutation $\Id$ given by $\Id(i)=i$, which we compare with the optimal sequence, i.e., the sequence that minimizes (\ref{objective}). 
To compare these sequences, we study the ratio between the cost functions under the \svf{} sequence and the optimal sequence. If this ratio is close to 1, then this is evidence that the \svf{} sequence performs well. 

In this paper we specifically focus on two settings. 
In the first place we consider the performance of \svf{} for {\it mean-based schedules}, which are given by $\bm{x}=\bm{\mu}$. We then consider the approximation ratio
\begin{align*}
    \varrho_{\omega}(\vec{B}) := \frac{\ObjB{\vec{B}}{\Id}{\bm{\mu}}{\omega}}{\min\{\ObjB{\vec{B}}{\tau}{\bm{\mu}}{\omega}: \tau \in \sym_n\}}.
\end{align*}
We will write just $\varrho_{\omega}$ when the service-time distributions under consideration are unambiguous.
In the second place, we consider the performance of \svf{} for {\it optimally-spaced schedules}. In this context we compare the \svf{} sequence $\Id$ along with a given schedule $\bm{{x}}$ with the optimal combination of sequence and schedule (i.e.,  a sequencing $\tau$ and schedule  ${\bm{{y}}}$ that minimize the cost function). This means that here we consider the approximation ratio
\begin{align*}
    r_{\omega}(\vec{B},\vec{{x}}) := \frac{\ObjB{\vec{B}}{\Id}{\bm{{x}}}{\omega}}{\min\{\ObjB{\vec{B}}{\tau}{\bm{y}}{\omega}:\tau \in \sym_n,\bm{y} \in \mathbb{R}_+^n\}}.
\end{align*}
Once again, we will omit $\vec{{x}}$ and $\vec{B}$ when their choice is unambiguous.

The main objective of this paper is to prove, under specific assumptions, upper bounds on $\varrho_{\omega}$ and $r_{\omega}$. 
Such an upper bound then guarantees that the \svf{} sequence always has a cost function of at most such an upper bound times the optimal cost function. We also show, under a mild condition, that $\varrho_{\omega}(\vec{B})$ converges to 1 as the number of patients tends to infinity, thus proving that the \svf{} sequence is asymptotically optimal when mean-based schedules are used.

\begin{remark}
Service times are inherently nonnegative, but our framework (based on the Lindley recursions (\ref{LindleyModel})) carries over to situations where the $B_i$ are allowed to take negative values.
This might be useful if the true distributions of service times can be approximated using distributions that can take negative values (with some small probability), for example normal distributions. If such distributions that can take negative values form a good fit to the data in some application, the theoretical performance of the \svf{} rule for these distributions gives a meaningful indication for the performance of the \svf{} rule in this application.
\end{remark}

\subsection{Preliminaries}
We need the following well-known results concerning the waiting and idle times. It follows by iterating the Lindley recursion (\ref{LindleyModel}) that $W_{k+1}$ is the maximum of a random walk with steps $B_{\tau(k)}-x_{\tau(k)}, B_{\tau(k-1)}-x_{\tau(k-1)},\dots, B_{\tau(1)}-x_{\tau(1)}$, that is,
\begin{align}\label{W}
W_{k+1} = \max\left\{0, \max\left\{\sum_{i=j}^k B_{\tau(i)}-x_{\tau(i)}:  j\in\{1,\ldots,k\} \right\}\right\}.
\end{align}
In the setting of mean-based schedules $\vec{x} = \vec{\mu}$, we introduce the notation $X_{i}:= B_{i}-\mu_{i}$, and the random walk
\begin{align*}
S_{j}:= \sum_{i=1}^j X_{\tau(k-i+1)}.
\end{align*}
Note that $S_j$ also depends on $k$ and $\tau$, but we keep this dependence implicit to ease the notation.
We then find for the mean-based schedule that
\begin{align}\label{Wmean}
W_{k+1}=\max\{0,S_{1},\dots,S_{k}\}.
\end{align}

Note that (\ref{LindleyModel}) implies $W_{i+1}-I_{i+1}=W_i+B_{\tau(i)}-x_{\tau(i)}$. Summing over $i$ we find the identity
\begin{align}\label{I}
\sum_{i=1}^n I_{i} +\sum_{i=1}^n B_{i} =\sum_{i=1}^{n-1} x_{\tau(i)} +W_{n}+B_{\tau(n)},
\end{align}
which corresponds to the total time until all $n$ patients have been served (the ``session end time'').
For a given schedule, this relation can be used to express the expected total idle time in terms of the expected waiting time of the last patient. Therefore, we can focus on the waiting times, and derive results for the idle time from \eqref{I}.

The following example shows that, if one does not impose any conditions on the service-time distributions, one can construct examples in which the approximation ratio is unbounded.

\begin{example}\label{noconstbound}
Suppose we have two patients, and the service time of patient $i$ is given by
\begin{equation}\label{counterexample}
B_i=\begin{cases}
\mu_i+\frac{1}{a_i} &\text{ with probability }a_i^2 \\
\mu_i-\frac{1}{a_i} &\text{ with probability }a_i^2 \\
\mu_i&\text{ with probability }1-2a_i^2,
\end{cases}
\end{equation}
for some values $\mu_i>0$ and $a_i\leq 1/\sqrt2$. Then $\E B_i=\mu_i$, and $\Var B_i=2$, and so either of the two possible sequences could be considered the \svf{} sequence. 
We take the \svf{} sequence to be given by $\tau(i)=i$; one can of course slightly perturb the distributions so that this is the {\it unique} \svf{} ordering. 

Suppose we use the mean-based schedule given by $\bm{x}=\bm{\mu}$. The cost function for the \svf{} sequence $\tau(i)=i$ is then
\begin{align*}
\omega \E I_2+(1-\omega) \E W_2= \omega \E(B_1-\mu_1)^-+(1-\omega)\E(B_1-\mu_1)^+= \omega a_1+(1-\omega)a_1=a_1.
\end{align*}
In the same way, the cost function for the other sequence is equal to $a_2$. If we take $a_1$ to be bigger than $a_2$, we conclude $\varrho_{\omega} =a_1/a_2$, which can be arbitrarily large. As a consequence, it is necessary to impose assumptions on the service-time distributions in order to bound~$\varrho_\omega$.
The construction can easily be extended to one corresponding to any larger number of patients, by introducing additional patients with deterministic service times. Therefore, this example also shows that some assumption is necessary for the asymptotic result in Section~\ref{sec:asymptotic}.

The example applies also when an optimally-spaced, rather than mean-based, scheduling rule is used.
Fixing $\omega=\frac12$, for two patients with service times as in \eqref{counterexample}, the cost function is 
\begin{align*}
\tfrac12 \E\left(B_{\tau(1)}-x_{\tau(1)}\right)^-+\tfrac12 \E\left(B_{\tau(1)}-x_{\tau(1)}\right)^+.
\end{align*}
By Lemma~\ref{quantile}, the $x_{\tau(1)}$ that minimizes this cost function is given by $x_{\tau(1)}=\mu_{\tau(1)}$, which is the mean of $B_{\tau(1)}$. 
So the situation is unchanged, and also for the optimal scheduling rule no bound on the approximation ratio can be found without imposing further assumptions.
\end{example}

Example~\ref{noconstbound} shows that it will be necessary to impose assumptions on the service-time distributions in order to be able to bound the approximation ratio. Notice that the three-point distributions used in the example can be considered as artificial, in the sense that they substantially differ from distributions that are frequently used in the context of health care. One would like to use an assumption under which the approximation ratio can be bounded, but which does not rule out relevant, commonly used service-time distributions. 

Not all service time distributions might be sufficiently comparable through their variance alone. We will need the concepts of a \emph{convex ordering} and \emph{dilation ordering} on random variables. More background on the convex ordering, dilation ordering and related concepts can be found in \cite{Shaked}.

\begin{definition}
The random variable $A$ is said to be smaller in the convex order than the random variable $B$ if $\E \phi(A)\leq \E \phi(B)$ for all convex functions $\phi:\R\to\R$ for which the expectations exist. This will be denoted by $A\cleq B$. If $A-\E A\cleq B-\E B$, then $A$ is said to be smaller than $B$ in the dilation order, denoted as $A\dleq B$.
\end{definition}

Note that $A\cleq B$ implies $A\dleq B$, and $A\dleq B$ implies $\Var A\leq \Var B$. Throughout this paper, unless stated otherwise, we will make the following assumption on the service time distributions.

\begin{assumption}[\bf dilation ordering]\label{AStochDom}
We have $B_1\dleq B_2\dleq\dots\dleq B_n$.
\end{assumption}

We remark that this is the condition under which \cite{Gupta} proves optimality of \svf{} for two patients. Note also that this assumption implies $\Var(B_1)\leq \dots\leq \Var(B_n)$.
Examples of instances satisfying this assumption include all $B_i$ having exponential distributions (by Theorem 3.A.18 in \cite{Shaked}), and all $B_i$ having lognormal distributions such that both $\E[ \ln B_1]\leq \dots \leq \E[\ln B_n]$ and $\Var( \ln B_1)\leq \dots\leq \Var( \ln B_n)$, as proved in Appendix~\ref{lognormalDil}. 
The following lemma, taken from \cite{Shaked}, is useful when checking whether given random variables satisfy a convex order.

\begin{lemma}\label{condExp}
The random variables $A$ and $B$ satisfy $A\cleq B$ if and only if there exists a coupling $\hat{A}\dis A$ and $\hat{B}\dis B$ such that $\E\big[\hat{B}\big|\hat{A}\,\big]=\hat{A}$.
\end{lemma}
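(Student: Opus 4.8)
The plan is to prove the two implications separately; ``$\Leftarrow$'' is immediate from conditional Jensen, while ``$\Rightarrow$'' is the substantive half, which I would reduce to a linear-programming duality argument in the finitely supported case and then push to the general case.

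\emph{The easy direction.} Suppose $(\hat A,\hat B)$ is a coupling with $\hat A\dis A$, $\hat B\dis B$ and $\E[\hat B\mid\hat A]=\hat A$, and let $\phi:\R\to\R$ be convex with $\E\phi(A),\E\phi(B)$ finite. By the conditional Jensen inequality,
\[
\phi(\hat A)=\phi\big(\E[\hat B\mid\hat A]\big)\leq\E\big[\phi(\hat B)\mid\hat A\big]\qquad\text{almost surely,}
\]
and taking expectations gives $\E\phi(A)=\E\phi(\hat A)\leq\E\phi(\hat B)=\E\phi(B)$, i.e.\ $A\cleq B$.

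\emph{The finitely supported case of the hard direction.} Assume $A$ takes values $a_1,\dots,a_m$ with probabilities $p_i>0$ and $B$ takes values $b_1,\dots,b_k$ with probabilities $q_j>0$. A coupling with the required martingale property is exactly a nonnegative solution $(\pi_{ij})$ of
\[
\sum_j\pi_{ij}=p_i\ \ (\forall i),\qquad \sum_i\pi_{ij}=q_j\ \ (\forall j),\qquad \sum_j\pi_{ij}(b_j-a_i)=0\ \ (\forall i),
\]
the last family of equalities being $\E[\hat B\mid\hat A=a_i]=a_i$. By Farkas' lemma, such a $\pi$ exists iff every triple $(\lambda_i),(\nu_j),(\gamma_i)$ with $\lambda_i+\nu_j+\gamma_i(b_j-a_i)\leq0$ for all $i,j$ satisfies $\sum_i\lambda_ip_i+\sum_j\nu_jq_j\leq0$. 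Given such a triple, set $\phi(x):=\max_i\big(\gamma_i(x-a_i)+\lambda_i\big)$, a finite maximum of affine functions, hence a (finite) convex function on $\R$. The dual inequalities give $\nu_j\leq\min_i\big(-\gamma_i(b_j-a_i)-\lambda_i\big)=-\phi(b_j)$, and evaluating the defining maximum at $x=a_i$ gives $\phi(a_i)\geq\lambda_i$; therefore
\[
\sum_i\lambda_ip_i+\sum_j\nu_jq_j\ \leq\ \sum_i\phi(a_i)p_i-\sum_j\phi(b_j)q_j\ =\ \E\phi(A)-\E\phi(B)\ \leq\ 0
\]
by the hypothesis $A\cleq B$ applied to this $\phi$. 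Hence the Farkas condition holds and $\pi$ exists.

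\emph{Passing to the general case.} For arbitrary $A\cleq B$ (in our context $\E B^2<\infty$, which is convenient), I would obtain the coupling as the infinite-dimensional counterpart of the duality above, i.e.\ Strassen's theorem. Concretely I would either (i) approximate $A,B$ by finitely supported laws retaining the convex order, build martingale couplings by the previous step, and extract a weak limit of the joint laws (tight, since the marginals are), or (ii) invoke the characterization that $A\cleq B$ iff $\E A=\E B$ and $\E(A-t)^+\leq\E(B-t)^+$ for all $t\in\R$, and construct the coupling on $\R$ explicitly from these two convex potential functions, which agree in the limits $t\to\pm\infty$ and whose difference prescribes the martingale rearrangement of mass. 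The step I expect to be the main obstacle is the one inherent in route (i): ensuring simultaneously that the discretization preserves $\cleq$ and that the identity $\E[\hat B\mid\hat A]=\hat A$ survives the weak limit — the latter failing under weak convergence alone and requiring a uniform-integrability bound on the approximating $\hat B$'s, which the uniform control of their second moments supplies.
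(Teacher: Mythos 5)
The paper does not prove this lemma at all: it is imported verbatim from the reference [Shaked] (it is the coupling characterization of the convex order, i.e.\ a form of Strassen's theorem), so there is no in-paper argument to compare yours against. Judged on its own merits, your ``$\Leftarrow$'' direction via conditional Jensen is correct, and your Farkas/LP-duality proof of ``$\Rightarrow$'' in the finitely supported case is also correct --- the identification of a dual certificate $(\lambda,\nu,\gamma)$ with the piecewise-affine convex function $\phi(x)=\max_i(\gamma_i(x-a_i)+\lambda_i)$, together with $\phi(a_i)\geq\lambda_i$ and $\nu_j\leq-\phi(b_j)$, is exactly the standard finite-dimensional argument. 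It is worth noting that the paper only ever invokes the lemma in the ``$\Leftarrow$'' direction (every application exhibits a coupling explicitly and concludes $\cleq$), so the half you have proved in full generality is the only half the paper actually uses.

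The general ``$\Rightarrow$'' direction, however, remains a sketch, and the obstacle you flag in route (i) is genuine and is in fact where the substance of Strassen's theorem lies. The natural discretization $A_n=\E[A\mid\mathcal{G}_n]$ for finite partitions $\mathcal{G}_n$ produces approximations satisfying $A_n\cleq A$ and $B_n\cleq B$, which does \emph{not} yield $A_n\cleq B_n$; to preserve the order between the two sequences you must discretize $A$ inward (by conditional expectation) but $B$ outward (e.g.\ pushing the mass of each partition cell to its endpoints so that $B\cleq B_n$), and the unbounded tails then have to be handled by truncation controlled by the second-moment bound. The subsequent weak-limit step is fine as you describe it (the martingale property passes to the limit once $\E[f(\hat A_n)(\hat B_n-\hat A_n)]\to\E[f(\hat A)(\hat B-\hat A)]$ for bounded continuous $f$, which uniform integrability supplies), but as written neither route (i) nor route (ii) is carried out. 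For a complete proof you should either fill in the outward discretization of $B$ explicitly or simply cite Strassen's theorem (Theorem 3.A.4 in [Shaked]) for this direction, as the paper implicitly does.
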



\section{Bounds on performance under mean-based schedules}\label{sec:bounds}

In this section we study $\varrho_{\omega}$, the approximation ratio under the mean-based schedule. To gain some insight as to what approximation ratio we can expect for typical problem instances, we start by discussing the output of a series of numerical experiments, described in more detail in Appendix~\ref{MBNumerics}. It should be kept in mind that, as finding the optimal sequences in principle requires comparing all $n!$ candidates and is therefore computationally very expensive, such experiments can only be done for relatively small values of $n$. We recall that this is also the main reason why we study approximation ratios: computing the optimal sequence for larger instances is prohibitively slow, which explains why  it is useful to gain insight into the performance of heuristics.

We considered exponential and lognormal service-time distributions. The exponential distributions have the advantage of an efficiently computable cost function using the machinery developed in \cite{Wang}. For exponentially distributed service times the main conclusion is that  the \svf{} sequence typically performs within $1\%$ of optimal ($\varrho_\omega\leq 1.01$). Lognormal distributions are more realistic from a practical perspective \cite{Cayirli,Klassen}. No exact computational scheme being available, we resorted to a discrete-time approximation to evaluate the cost; see Appendix~\ref{MBNumerics} for more background on the implementation. In the experiments we performed the approximation ratios were uniformly below 1.01.

The numerics discussed in the previous paragraph suggest  that \svf{} typically performs very well.
However, the service-time distributions featuring in the next example lead to a much higher approximation ratio.

\begin{example}\label{BadExampleMB}
Suppose the service time of patient $i$ is given by
\begin{equation*}
B_i=\begin{cases} m+1 & \text{with probability } m/(m+1) \\
0 & \text{with probability }1/(m+1),
\end{cases}
\end{equation*}
for $i<n-1$, and $B_{n-1}\dis B_n \dis K B_1$, for some parameters $m$ and $K$. We set $\omega=1$. 
We compare the \svf{} sequence with the sequence  $\tau$ given by $\tau(1)=n-1$, $\tau(n-1)=1$ and $\tau(i)=i$ for $i\neq 1,n-1$.
For $n=10$ patients, $m=10$ and $K=10$ this results on a lower bound of $1.29$ on the approximation ratio $\varrho_1$. We can, however, obtain considerably higher approximation ratios: for example for $n=50\,000$ patients, $m=500$ and $K=5\,000$ we find that the approximation ratio is larger than $1.52$.
Further experiments revealed that similar examples for even bigger values of $n$ only marginally increase this lower bound.
Note that this problem instance satisfies Assumption~\ref{AStochDom}, the dilation ordering assumption. This means that it is impossible to prove a better bound on $\varrho_\omega$ than $1.52$ without imposing further assumptions. 
\end{example}

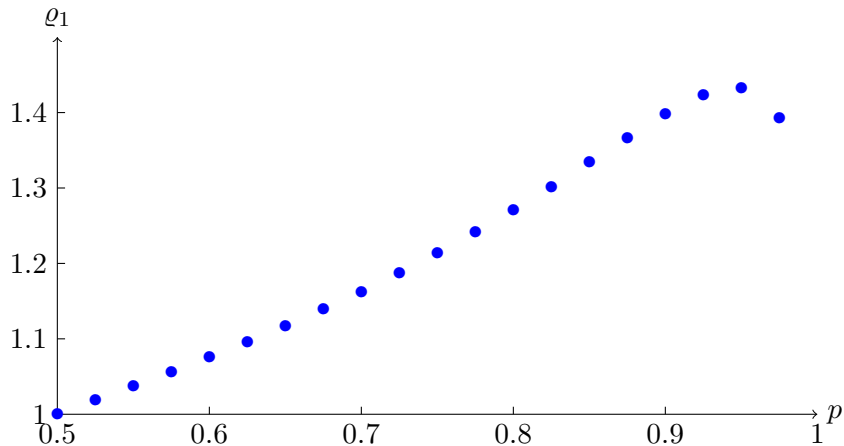
\begin{figure}[h]
\centering
\begin{tikzpicture}[xscale=20,yscale=10]
  \def\xmin{0.5}
  \def\xmax{1}
  \def\ymin{1}
  \def\ymax{1.5}
    \draw[->] (\xmin,\ymin) -- (\xmax,\ymin) node[right] {$p$} ;
    \draw[->] (\xmin,\ymin) -- (\xmin,\ymax) node[above] {$\varrho_1$} ;
    \foreach \x in {0.5,0.6,0.7,0.8,0.9,1}
    \node at (\x,\ymin) [below] {\x};
    \foreach \y in {1,1.1,1.2,1.3,1.4}
    \node at (0.5,\y) [left] {\y};
    \draw (0.6,1)--(0.6,1.01);
     \draw (0.7,1)--(0.7,1.01);
      \draw (0.8,1)--(0.8,1.01);
       \draw (0.9,1)--(0.9,1.01);
       \draw(0.5,1.1) --(0.5051,1.1);
  \draw(0.5,1.2) --(0.5051,1.2);
  \draw(0.5,1.3) --(0.5051,1.3);
  \draw(0.5,1.4) --(0.5051,1.4);

    \foreach \Point in {( 0.5 , 1 ), ( 0.525 , 1.01789 ), ( 0.55 , 1.03623 ), ( 0.575 , 1.0551 ), ( 0.6 , 1.07458 ), ( 0.625 , 1.0949 ), ( 0.65 , 1.11607 ), ( 0.675 , 1.13827 ), ( 0.7 , 1.16177 ), ( 0.725 , 1.18658 ), ( 0.75 , 1.21259 ), ( 0.775 , 1.24064 ), ( 0.8 , 1.2699 ), ( 0.825 , 1.3011 ), ( 0.85 , 1.33336 ), ( 0.875 , 1.36593 ), ( 0.9 , 1.39713 ), ( 0.925 , 1.42269 ), ( 0.95 , 1.43188 ), ( 0.975 , 1.39149 )}
    {\node[blue] at \Point {\textbullet};}
    
\end{tikzpicture}
\caption{The approximation ratio $\varrho_1$ as a function of the {parameter $p$ of} Bernoulli random variables. Each instance consists of 200 patients, with 198 of them having a Bernoulli distributed service time with the probability to take the larger value being equal to $p$, whereas the other two patients have a service time distributed as 50 times the same Bernoulli distribution.}
\label{fig:BiasRatio}
\end{figure}

The distributions in Example~\ref{BadExampleMB} could be considered as quite extreme: they are two point distribution that almost always take a value slightly larger than the mean, and very rarely take a value much less than the mean. Our numerics indicate that this behavior seems to be necessary to obtain a large gap between the performance of \svf{} and the optimal sequence. To further study the effect of such ``skewness'', we performed the following experiment. We evaluate the approximation ratio $\varrho_1$ for instances with $n=200$ patients, with $n-2$ of them having a Bernoulli distributed service time with the probability to take the larger value being equal to $p$, whereas the other two patients have a service time distributed as 50 times the same Bernoulli distribution. Figure~\ref{fig:BiasRatio} shows the approximation ratio $\varrho_1$  as a function of $p$.  In this case we are able to compute the optimal sequence, despite the relatively large value of $n$,  due to the fact that many patients have the same service-time distribution, so that the number of possible distinct sequences is drastically reduced. It can be seen that \svf{} is optimal for $p \leq 1/2$, and then grows as $p$ increases, until it decreases again for $p$ very close to one.

The above numerics suggest that \svf{} performs very well with both exponential and lognormal distributions.
These distributions have tails only to the right, and \emph{not} to the left.
If one ``flips'' the lognormal distributions, i.e., replace each service time $B_i$ with $-B_i$, we can obtain substantially larger approximation ratios; for one particular instance, we find 1.0345 as opposed to 1.0086 (see Appendix~\ref{MBNumerics} for more details). These examples suggest that some measure of skewness is expected to be a key driver in whether \svf{} performs near-optimally. 
As this experiment intends to investigate the effect of skewness on the approximation ratio we have not pursued using maximally realistic service times. Indeed, $-B_i$ is not a realistic service time as it is always negative. However, we can also use $c-B_i$ instead of $-B_i$ without affecting the cost per sequence, and choose $c$ so large that $c-B_i$ will be positive with arbitrarily large probability. Neglecting the probability mass $c-B_i$ has on the negative numbers will not have any substantial effect on the cost.

Now that we have developed some intuition on the performance of \svf{}, we concentrate in
the rest of this section on obtaining upper bounds for $\varrho_{\omega}$. This amounts to giving an upper bound on the cost  when using the \svf{} sequence, and a lower bound on the cost  that is valid for any sequence, hence also for the optimal sequence. The waiting times and idle times under the \svf{} sequence will be denoted by $\Wsvf_i$ and $\Isvf_i$ respectively, and the waiting times and idle times under the optimal sequence will be denoted by $\Wopt_i$ and $\Iopt_i$. 

The remainder of this section is structured as follows. In Section \ref{main3} we prove results under rather general assumptions: Theorem~\ref{generalComp} and Theorem~\ref{nonsymmComp}. More specifically, these theorems give bounds on the approximation ratio $\varrho_{\omega}$, when we assume that the service times are symmetrically distributed and follow a dilation order (Theorem~\ref{generalComp}), and when we only assume that they follow a dilation order (Theorem~\ref{nonsymmComp}). 
In Section~\ref{normal} we consider the special case of normally distributed service times. Theorem~\ref{thm:normal} gives an improved bound on $\varrho_{\omega}$ in this case. 
In Section \ref{lognormal} we discuss a method for improving numerically upon the bound of Theorem~\ref{nonsymmComp}; informally, the more symmetric the {service-time distributions}, the closer the resulting bound is to the value stated in Theorem~\ref{generalComp}.
Table~\ref{TableResults} provides a summary of the results covered by this section.

\begin{table}
    \centering
        \renewcommand{\arraystretch}{1.2}
    \begin{tabular}{|c|c|c|}
        \hline
        Theorem & Assumptions on service time distributions & Bound on $\varrho_\omega$ \\ 
	\Xhline{4\arrayrulewidth}
        \ref{generalComp} & dilation ordering and symmetry & 2\\ \hline
        \ref{nonsymmComp} & dilation ordering & 4\\ \hline
        \ref{thm:normal} & normal distributions & $4(\sqrt2-1)\approx 1.66$ \\ \hline
        \ref{betterThan4} & dilation ordering & between 2 and 4 \\ \hline
    \end{tabular}
    \caption{Summary of main results of Section~\ref{sec:bounds}}\label{TableResults}
\end{table}


\subsection{General results}\label{main3}

Recall that we impose Assumption \ref{AStochDom} (the dilation ordering assumption). 
For the bound on $\varrho_{\omega}$ that is presented in Theorem \ref{generalComp} we also make the following assumption. 

\begin{assumption}[\bf symmetry]\label{ASymm}
The $B_i$ have symmetric distributions around their mean.
\end{assumption}

Examples of instances satisfying both the ordering and symmetry assumption include all $B_i$ having normal distributions, all $B_i$ having uniform distributions and all $B_i$ having Laplace distributions. For all three examples, the ordering assumption follows from  \cite{Shaked}, Theorem 3.A.18. Note that Example~\ref{noconstbound} consists of symmetric service time distributions, so even under the symmetry assumption it is necessary to impose the dilation ordering assumption in order to obtain any bound on $\varrho_\omega$.

In this section, we prove the following theorems. Recall that Example \ref{BadExampleMB} discusses an instance satisfying the dilation ordering assumption where $\varrho_\omega\geq 1.52$.

\begin{theorem}\label{generalComp}
    Under the dilation ordering and symmetry assumptions, we have $\varrho_{\omega}\leq 2$.
\end{theorem}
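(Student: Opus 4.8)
The plan is to establish the pointwise bound
\begin{equation*}
  \E \Wsvf_{k+1} \;\le\; 2\,\E W^{\tau}_{k+1}
\end{equation*}
for every $k\in\{1,\dots,n-1\}$ and every sequence $\tau\in\sym_n$, where $W^{\tau}_i$ denotes the waiting time of slot $i$ under sequence $\tau$ and the mean-based schedule $\bm{x}=\bm{\mu}$. Granting this, the theorem follows quickly: taking expectations in \eqref{I} with $\bm{x}=\bm{\mu}$ gives $\sum_{i=1}^n\E I_i=\E W^{\tau}_n$ for every sequence, so $\ObjB{\vec{B}}{\tau}{\bm{\mu}}{\omega}=\omega\,\E W^{\tau}_n+(1-\omega)\sum_{i=1}^n\E W^{\tau}_i$; applying the pointwise bound term by term (with $W^{\tau}_1=0$) then gives $\ObjB{\vec{B}}{\Id}{\bm{\mu}}{\omega}\le 2\,\ObjB{\vec{B}}{\tau}{\bm{\mu}}{\omega}$, and taking $\tau$ to be the optimal sequence yields $\varrho_\omega\le 2$.

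For the pointwise bound, the first step passes from the running maximum in \eqref{Wmean} to the \emph{endpoint} of the underlying random walk. Under $\bm{x}=\bm{\mu}$ the increments $X_{\tau(1)},\dots,X_{\tau(k)}$ are independent and, by Assumption~\ref{ASymm}, each symmetric about $0$. Setting $S_0:=0$, L\'evy's reflection inequality for sums of independent symmetric random variables gives $\p(\max_{0\le j\le k}S_j\ge a)\le 2\,\p(S_k\ge a)$ for $a>0$; the standard proof stops the walk at the first passage of level $a$ and uses that the remaining increments are symmetric and independent of the past. Integrating over $a$, using the symmetry of $S_k$ (so $\E S_k^+=\tfrac12\,\E|S_k|$), and combining with the trivial lower bound $\max_{0\le j\le k}S_j\ge S_k^+$, we obtain
\begin{equation*}
  \tfrac12\,\E\bigl|S_k\bigr|\;\le\;\E W^{\tau}_{k+1}\;\le\;\E\bigl|S_k\bigr|.
\end{equation*}
Here $S_k=\sum_{i=1}^k X_{\tau(i)}$ depends on $\tau$ only through the \emph{set} $\{\tau(1),\dots,\tau(k)\}$ of patients in the first $k$ slots, so the ordering within those slots has dropped out.

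The second step identifies which $k$-subset minimizes $\E\bigl|\sum_{i\in A}X_i\bigr|$. Assumption~\ref{AStochDom} is precisely the statement $X_1\cleq X_2\cleq\dots\cleq X_n$ (the $X_i$ have mean $0$, so the dilation order on the $B_i$ is the convex order on the $X_i$). For any $k$-element subset $A=\{j_1<\dots<j_k\}\subseteq\{1,\dots,n\}$ one has $j_\ell\ge\ell$ for each $\ell$, hence $X_\ell\cleq X_{j_\ell}$; replacing the summands $X_{j_1},\dots,X_{j_k}$ by $X_1,\dots,X_k$ one at a time — conditioning on the remaining (independent) summands and using that $x\mapsto|x+c|$ is convex — gives $\E\bigl|\sum_{i=1}^k X_i\bigr|\le\E\bigl|\sum_{i\in A}X_i\bigr|$. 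Thus the \svf{} set $\{1,\dots,k\}$ attains the minimum over all $k$-subsets.

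Combining the two steps, for any $\tau$ with $A=\{\tau(1),\dots,\tau(k)\}$ we get
\begin{equation*}
  \E\Wsvf_{k+1}\;\le\;\E\Bigl|\sum_{i=1}^k X_i\Bigr|\;\le\;\E\Bigl|\sum_{i\in A}X_i\Bigr|\;\le\;2\,\E W^{\tau}_{k+1},
\end{equation*}
which is the desired pointwise bound. I expect the only genuinely delicate point to be the first step: invoking L\'evy's inequality under the correct hypotheses (increments independent and individually symmetric, but not necessarily identically distributed) and handling cleanly the leading $0$ in the running maximum. Everything else is routine convex-ordering bookkeeping. Note that symmetry enters only in that first step — both in L\'evy's inequality and in the identity $\E S_k^+=\tfrac12\,\E|S_k|$ — which is presumably why the general bound (Theorem~\ref{nonsymmComp}) must lose a further factor of $2$.
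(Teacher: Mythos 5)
Your proof is correct and follows essentially the same route as the paper: the reflection/L\'evy inequality giving $\E W_{k+1}\le\E|S_k|$ is exactly the paper's Lemma~\ref{reflectedUB}, the endpoint bound $W_{k+1}\ge S_k^+$ is the paper's lower bound \eqref{convexLB}, and the one-at-a-time convex-ordering swap is the content of Lemma~\ref{OneWaitingTime}. The only (harmless) difference is organizational -- you apply the convex-ordering swap to the endpoint functional $\E|S_k|$ after both reductions, whereas the paper applies it to the full running maximum $W_{k+1}$ to first reduce to the restricted optimum $W^{\text{\sc opt}'}_{k+1}$.
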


\begin{theorem}\label{nonsymmComp}
    Under the dilation ordering assumption, we have $\varrho_{\omega}\leq 4$.
\end{theorem}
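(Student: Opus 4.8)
The plan is to prove the (formally stronger) statement that for \emph{every} $\tau\in\sym_n$ one has
$\E[\Wsvf_n]\le 4\,\E[W^\tau_n]$ and $\sum_{i=1}^n\E[\Wsvf_i]\le 4\sum_{i=1}^n\E[W^\tau_i]$,
where $W^\tau_i$ denotes the waiting time in slot $i$ under sequence $\tau$ (so $\Wsvf_i=W^{\Id}_i$). Taking expectations in \eqref{I} with $\vec x=\vec\mu$ gives $\sum_i\E I_i=\E W_n$, hence $\Obj{\tau}{\vec\mu}{\omega}=\omega\,\E[W^\tau_n]+(1-\omega)\sum_i\E[W^\tau_i]$; so once the two inequalities are known for all $\tau$, applying them to the optimal sequence immediately yields $\varrho_\omega\le 4$. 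Throughout write $X_i=B_i-\mu_i$ and $T_k=X_1+\cdots+X_k$.

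First I would bound the \svf{} side. By \eqref{Wmean}, $\Wsvf_{k+1}=\max\{0,S_1,\dots,S_k\}$ with $S_j=X_k+X_{k-1}+\cdots+X_{k-j+1}$, so $S_k=T_k$ is independent of the ordering of $X_1,\dots,X_k$ --- this is the structural feature that makes \svf{} tractable. Introduce the symmetrized variables $\tilde X_i:=X_i-X_i'$, with $X_i'$ an independent copy of $X_i$ and the $2n$ variables mutually independent; the $\tilde X_i$ are symmetric and satisfy $\E[\tilde X_i\mid X_i]=X_i$. Since $(x_1,\dots,x_k)\mapsto\max\{0,x_1,x_1+x_2,\dots,x_1+\cdots+x_k\}$ is convex, conditional Jensen gives $\E[\Wsvf_{k+1}]\le\E[\max\{0,\tilde S_1,\dots,\tilde S_k\}]$, where $\tilde S_j$ is the analogous walk built from the $\tilde X_i$ (this is the natural vector-valued convex-order comparison, checkable via Lemma~\ref{condExp}). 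Then L\'evy's inequality for sums of independent \emph{symmetric} random variables gives $\E[\max\{0,\tilde S_1,\dots,\tilde S_k\}]\le 2\,\E[\tilde S_k^+]=\E|\tilde S_k|=\E|\tilde T_k|\le 2\,\E|T_k|$, the last step by the triangle inequality applied to $\tilde T_k=T_k-T_k'$. Hence $\E[\Wsvf_{k+1}]\le 2\,\E|T_k|$ for every $k$. (In the symmetric setting of Theorem~\ref{generalComp} the symmetrization is unnecessary --- L\'evy applies to $S_k$ directly --- which is exactly what saves the extra factor $2$ and accounts for the difference between the two bounds.)

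Next I would lower-bound the cost of an arbitrary $\tau$ by an exchange argument. For any $\tau$ and $k$, $W^\tau_{k+1}\ge (S^\tau_k)^+$ where $S^\tau_k:=X_{\tau(1)}+\cdots+X_{\tau(k)}$; since $S^\tau_k$ is centred, $\E[(S^\tau_k)^+]=\tfrac12\E|S^\tau_k|$, so $\sum_i\E[W^\tau_i]\ge\tfrac12\,\Phi(\tau)$ with $\Phi(\tau):=\sum_{k=1}^{n-1}\E|X_{\tau(1)}+\cdots+X_{\tau(k)}|$. I claim $\Phi$ is minimized at $\tau=\Id$: transposing positions $j$ and $j+1$ of $\tau$ alters only the $k=j$ term of $\Phi$ (the $k=n$ term $\E|T_n|$ being permutation-free), replacing $\E|A+X_{\tau(j)}|$ by $\E|A+X_{\tau(j+1)}|$ with $A$ independent of both; convexity of $x\mapsto|a+x|$ together with $X_{\tau(j+1)}\cleq X_{\tau(j)}$ (valid when the pair is out of order, by Assumption~\ref{AStochDom}) makes such a swap non-increasing, so bubble-sorting gives $\Phi(\Id)\le\Phi(\tau)$. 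Combining, $\sum_i\E[\Wsvf_i]=\sum_{k=1}^{n-1}\E[\Wsvf_{k+1}]\le 2\sum_{k=1}^{n-1}\E|T_k|=2\,\Phi(\Id)\le 2\,\Phi(\tau)\le 4\sum_i\E[W^\tau_i]$. For the last-patient term, $\E[\Wsvf_n]\le 2\,\E|T_{n-1}|$, while $\E[W^\tau_n]\ge\tfrac12\E\bigl|\sum_{i\ne\tau(n)}X_i\bigr|$; since $T_{n-1}$ omits patient $n$ and $\sum_{i\ne\tau(n)}X_i$ omits patient $\tau(n)$ (whose centred service time satisfies $X_{\tau(n)}\cleq X_n$), convexity of $x\mapsto|x+C|$ gives $\E|T_{n-1}|\le\E\bigl|\sum_{i\ne\tau(n)}X_i\bigr|$, hence $\E[\Wsvf_n]\le 4\,\E[W^\tau_n]$.

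The step I expect to be the main obstacle is the first one: the reduction of $\E[\Wsvf_{k+1}]$ to the sequence-blind quantity $\E|T_k|$. The crude bound $\E[\Wsvf_{k+1}]\le\sum_{i\le k}\E X_i^+$ is off by a factor growing with $n$ (it behaves like $k$ rather than like $\E|T_k|$), so one genuinely needs the symmetrization together with L\'evy's inequality to collapse the running maximum of the reversed walk into a single absolute moment of the \emph{full} sum, which no longer sees the ordering; getting the constant there to be exactly $2$ is what makes the whole argument produce a dimension-free bound. The exchange argument of the third paragraph is comparatively standard, but note that it --- and the last-patient step --- are precisely where the dilation-ordering Assumption~\ref{AStochDom} is indispensable, consistent with Example~\ref{noconstbound}.
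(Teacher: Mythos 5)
Your proof is correct and follows essentially the same route as the paper's: you symmetrize the steps via $X_i-X_i'$, use the reflection/L\'evy bound to get $\E \Wsvf_{k+1}\le 2\E|T_k|=4\E T_k^+$, and lower-bound any sequence's cost through $(S_k^\tau)^+$ together with a convex-order exchange argument. The only (immaterial) difference is organizational: the paper channels the lower bound through a constrained optimum $W^{\text{\sc opt}'}_{k+1}$ and a one-shot replacement argument (Lemma~\ref{OneWaitingTime}), whereas you bubble-sort the functional $\sum_k\E|X_{\tau(1)}+\cdots+X_{\tau(k)}|$ and treat the $\E W_n$ term separately.
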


A first key insight is that it suffices to prove bounds on $\E W_{k+1}$ for given $k$ when the first $k$ slots are constrained to contain patients $1,\dots,k$. This is made explicit in the next lemma.

\begin{lemma}\label{OneWaitingTime}
    Let $\E W^{\text{\sc opt}'}_{k+1}$ denote the expected waiting time of the patient in appointment slot $k+1$, under the sequence that minimizes this expected waiting time, subject to the constraint that $\tau(i)\leq k$ for all $i=1,\dots,k$, i.e.\ the first $k$ patients are assigned to the first $k$ slots. Suppose $\E \Wsvf_{k+1}/\E W^{\text{\sc opt}'}_{k+1}\leq \varrho'$ for all $k$. Then, under the dilation ordering assumption, $\varrho_{\omega}\leq \varrho'$.
\end{lemma}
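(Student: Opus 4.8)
The plan is to rewrite the cost function as a nonnegative weighted sum of expected waiting times and then to compare the \svf{} and optimal sequences slot by slot. For the first step, specialize the identity \eqref{I} to the mean-based schedule $\bm{x}=\bm{\mu}$; taking expectations and using $\E[B_i-\mu_i]=0$ gives $\sum_{i=1}^n\E I_i=\E W_n$, hence $\Obj{\tau}{\bm{\mu}}{\omega}=\sum_{k=1}^n c_k\,\E W_k$ for every sequence $\tau$, where $c_k=1-\omega$ for $k<n$ and $c_n=1$, so all the $c_k$ are nonnegative. It therefore suffices to prove $\E\Wsvf_k\leq\varrho'\,\E\Wopt_k$ for each slot $k$ (trivially true for $k=1$, where $W_1=0$): multiplying by $c_k$ and summing then gives $\Obj{\Id}{\bm{\mu}}{\omega}\leq\varrho'\min_\tau\Obj{\tau}{\bm{\mu}}{\omega}$, i.e.\ $\varrho_\omega\leq\varrho'$.

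Fix a slot index $k+1$ with $k\geq 1$. The hypothesis gives $\E\Wsvf_{k+1}\leq\varrho'\,\E W^{\text{\sc opt}'}_{k+1}$, so it remains to show that constraining the first $k$ slots to hold patients $1,\dots,k$ cannot raise the minimal slot-$(k+1)$ expected waiting time, i.e.\ $\E W^{\text{\sc opt}'}_{k+1}\leq\E\Wopt_{k+1}$. Let $\tau^{\mathrm{opt}}$ be an optimal sequence and write the patients it places in slots $1,\dots,k$ as $\{a_1<a_2<\cdots<a_k\}\subseteq\{1,\dots,n\}$, so that $a_j\geq j$ for all $j$. Build a sequence $\tau'$ from $\tau^{\mathrm{opt}}$ by performing, for $j=1,\dots,k$ in turn, the transposition swapping patients $a_j$ and $j$; each such step is a legitimate permutation, and an easy induction shows that afterwards slots $1,\dots,k$ hold exactly patients $1,\dots,k$, so $\tau'$ is admissible for the constraint. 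By \eqref{Wmean}, $W_{k+1}=\max\{0,S_1,\dots,S_k\}$ is a function only of the increments $X_{\tau(1)},\dots,X_{\tau(k)}$ (where $X_i:=B_i-\mu_i$) sitting in slots $1,\dots,k$; and at step $j$ patient $a_j$ lies in slots $1,\dots,k$ while patient $j$ lies outside them (or the swap is vacuous), so the effect of the $j$-th swap on $W_{k+1}$ is exactly to replace one argument $X_{a_j}$ of this function by $X_j$, leaving the remaining arguments unchanged. Now $W_{k+1}$, as a function of the increment vector, is convex (a pointwise maximum of linear functions, by \eqref{Wmean}), the increments are independent, and the dilation ordering assumption gives $X_j\cleq X_{a_j}$; hence, by the standard fact that replacing one argument of a convex function of independent random variables by an independent convex-order-smaller one cannot increase the expectation (fix the remaining arguments, note the resulting single-variable conditional expectation is again convex, and invoke the definition of $\cleq$), each swap cannot increase $\E W_{k+1}$. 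Summing over $j$, the expected slot-$(k+1)$ waiting time under $\tau'$ is at most that under $\tau^{\mathrm{opt}}$, which equals $\E\Wopt_{k+1}$; since $\tau'$ is admissible, $\E W^{\text{\sc opt}'}_{k+1}\leq\E\Wopt_{k+1}$, as required.

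The only place genuine work is needed is this exchange step: checking that $W_{k+1}$ is jointly convex in its increments, that the convex-order replacement can be pushed through the conditioning on the other coordinates (which is where independence of the $B_i$ enters), and the combinatorial bookkeeping that the successive transpositions leave slots $1,\dots,k$ occupied by patients $1,\dots,k$. The reduction of the cost to $\sum_k c_k\,\E W_k$ and the final slot-wise summation are routine.
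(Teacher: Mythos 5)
Your proposal is correct and follows essentially the same route as the paper's proof: rewrite the cost via \eqref{I} as $\omega\E W_n+(1-\omega)\sum_i\E W_i$, then use convexity of $W_{k+1}$ in the increments together with the dilation ordering to show that constraining slots $1,\dots,k$ to patients $1,\dots,k$ can only lower the minimal $\E W_{k+1}$, i.e.\ $\E W^{\text{\sc opt}'}_{k+1}\leq\E \Wopt_{k+1}$. The only difference is that you spell out the exchange argument (the successive transpositions and the conditioning step justifying the convex-order replacement) in more detail than the paper does.
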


\begin{proof}
Taking expectations in \eqref{I} and using that $x_i=\mu_i$, we find that $\sum_{i=1}^n \E I_i =\E W_n.$ 
Hence,  the cost function \eqref{objective} equals
$\omega\E W_n +(1-\omega)\sum_{i=1}^n \E W_i$.
Our goal is thus to bound the ratio 
\begin{equation}\label{eq:ratioobj}
    \varrho_{\omega} =\frac{\omega\E \Wsvf_n +(1-\omega)\sum_{i=1}^n \E \Wsvf_i}{\omega\E \Wopt_n +(1-\omega)\sum_{i=1}^n \E \Wopt_i}.
\end{equation}

Now note that $W_{k+1}=\max\{0,S_1,\dots,S_k\}$ is a convex function in each of the $X_i$, as it is the maximum of linear functions in $X_i$. Under Assumption~\ref{AStochDom} this implies that $\E \Wopt_{k+1}\geq \E W^{\text{\sc opt}'}_{k+1}$, as each step $X_i$ with $i>k$ can be replaced by some $X_j$ with $j\leq k$ and as $X_j\leq_{\text{cx}}X_i$ this lowers the expected waiting time.
Now 
\begin{align*}
\E\Wsvf_{k+1}/\E\Wopt_{k+1}\leq \E\Wsvf_{k+1}/\E W^{\text{\sc opt}'}_{k+1}\leq \varrho',
\end{align*}
and so also the ratio in (\ref{eq:ratioobj}) is bounded by $\varrho'$, which completes the proof.
\end{proof}
The following lemma is a second key insight to be used in the proofs of both Theorem~\ref{generalComp} and Theorem \ref{nonsymmComp}.
\begin{lemma}\label{reflectedUB}
Under the symmetry assumption, the random variable $W_{k+1}$ is stochastically dominated by $|S_k|$, and thus
\begin{equation*}
    \E W_{k+1}\leq  \E |S_k|=\E|\stp{\tau(1)} + \stp{\tau(2)} + \cdots + \stp{\tau(k)}|.
\end{equation*}
\end{lemma}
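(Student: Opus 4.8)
The plan is to prove the stronger assertion that $W_{k+1} = \max\{0, S_1, \dots, S_k\}$ is stochastically dominated by $|S_k|$; the bound $\E W_{k+1} \le \E|S_k|$ then follows immediately by integrating tail probabilities, since both variables are nonnegative. The tool is the \emph{reflection principle} for random walks whose increments are independent and symmetric about $0$, which is precisely the situation here: writing $S_j = \sum_{i=1}^j X_{\tau(k-i+1)}$ with increments $X_{\tau(\cdot)} = B_{\tau(\cdot)} - \mu_{\tau(\cdot)}$, each increment is symmetric about $0$ by Assumption~\ref{ASymm}, and the increments are independent since the $B_i$ are. As a first reduction, note that $S_k$, being a sum of independent symmetric variables, is itself symmetric, so $\p(|S_k| > a) = 2\,\p(S_k > a)$ for every $a \ge 0$, while for $a < 0$ both tails equal $1$. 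Hence it suffices to establish $\p(W_{k+1} > a) \le 2\,\p(S_k > a)$ for all $a \ge 0$.

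Next I would fix $a \ge 0$, let $T := \inf\{j \ge 0 : S_j > a\}$ (with $\inf\emptyset = \infty$), and observe that $T \ge 1$ and $\{W_{k+1} > a\} = \{T \le k\}$. Splitting according to whether $S_k > a$ gives $\p(T \le k) = \p(S_k > a) + \p(T \le k,\, S_k \le a)$, using $\{S_k > a\}\subseteq\{T\le k\}$. The main step is to bound the second term by $\p(S_k > a)$ by reflecting the walk after the stopping time $T$: conditioning on $\{T = t\}$ together with $S_t$ (both measurable with respect to the first $t$ increments), the remaining partial sum $S_k - S_t$ is independent of this information and symmetric about $0$, so
\begin{align*}
\p\bigl(S_k \le a \mid T = t,\, S_t\bigr) = \p\bigl(S_k - S_t \ge S_t - a \mid T = t,\, S_t\bigr) = \p\bigl(S_k \ge 2S_t - a \mid T = t,\, S_t\bigr).
\end{align*}
On $\{T = t\}$ one has $S_t > a \ge 0$, hence $2S_t - a > a$ and therefore $\{S_k \ge 2S_t - a\} \subseteq \{S_k > a\}$; summing over $t \in \{1,\dots,k\}$ yields $\p(T \le k,\, S_k \le a) \le \p(T \le k,\, S_k > a) = \p(S_k > a)$. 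Combining the two displays gives $\p(W_{k+1} > a) \le 2\,\p(S_k > a) = \p(|S_k| > a)$, which is exactly the claimed stochastic domination; the stated expectation inequality follows.

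I expect the only genuinely delicate point to be the rigorous justification of the reflection step, i.e.\ that conditionally on $\{T = t\}$ and $S_t$ the future increments $X_{\tau(k-t)},\dots,X_{\tau(1)}$ retain their joint law (independent, each symmetric). This is the single place where a little care with $\sigma$-algebras is needed, and it is where both the independence of the $B_i$ and the symmetry assumption are genuinely used. Everything else is bookkeeping. Note that the dilation ordering assumption plays no role in this lemma; it enters the overall argument only through Lemma~\ref{OneWaitingTime}.
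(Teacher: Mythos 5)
Your proof is correct and follows essentially the same route as the paper: both apply the reflection principle at the first passage time of level $a$, split the event $\{W_{k+1}>a\}$ into $\{S_k>a\}$ and the event that the walk has crossed $a$ but ends at or below it, and use symmetry of the post-crossing increments to bound the latter by $\p(S_k>a)$. The only difference is presentational --- the paper constructs the reflected path $\hat{S}$ explicitly and argues it is stochastically dominated by $S$, whereas you condition on $(T,S_T)$; both renderings are sound.
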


\begin{proof}
Recall that we have $W_{k+1}=\max\{0,S_1,\dots,S_k\}$ from (\ref{Wmean}).
Under the symmetry assumption, the steps $\stp{i}$ of the random walk $S$ have a symmetric distribution around zero, and hence the same is true for the $S_i$.

Let $T(a)=\inf\{i: S_i\geq  a\}$, and note that $\p(W_{k+1}\geq  a)= \p(T(a)\leq  k)$. 
To bound this probability, we  look at the random walk reflected in $a$ after $T(a)$. This reflected process $\hat{S}_i$ is defined by
\begin{equation}\label{reflected}
\hat{S}_i =\begin{cases} S_i & \text{ if } i<T(a) \\
2a-S_i & \text{ if }i\geq  T(a).
\end{cases}
\end{equation}

We have $S_{T(a)}\geq  a$, so $\hat{S}_{T(a)}=2a-S_{T(a)}\leq  a\leq  S_{T(a)}$. As the $\stp{i}$ have symmetric distributions, the increments of $S_i$ and $\hat{S}_i$ for $i\geq  T(a)$ have the same distribution. Therefore, we see that $\hat{S}_i$ is stochastically dominated by $S_i$, for every $i$. We conclude that $\p(\hat{S}_k>a)\leq  \p(S_k>a)$ for all $a$.

Now note that $W_{k+1}\geq  a$ implies that either $S_k\geq  a$ or $\hat{S}_k=2a-S_k>a$. As these are disjoint events we now have
\begin{align*}
&\p(W_{k+1}\geq  a)=\p(S_k\geq  a)+\p(\hat{S}_k>a) \leq  \p(S_k\geq  a)+\p(S_k>a)\leq  \p(|S_k|\geq  a).
\end{align*}
This holds for any $a\geq  0$, so $W_{k+1}$ is stochastically dominated by $|S_k|$, as was claimed.
\end{proof}

\begin{proofof}{Theorem \ref{generalComp}.}
As $W_{k+1}=\max\{0,S_1,\dots, S_k\}$, we have 
\begin{align*}
W_{k+1}\geq S_k^+=(X_{\tau(1)}+\dots+X_{\tau(k)})^+.
\end{align*}
Note that $\tau(i)\leq k$ for all $i\leq k$ when we consider $\E W^{\text{\sc opt}'}_{k+1}$, so now
\begin{align}\label{convexLB}
\E W^{\text{\sc opt}'}_{k+1}\geq \E(X_1+\dots+X_k)^+.
\end{align}
On the other hand, by Lemma \ref{reflectedUB},
\begin{equation*}
    \E \Wsvf_{k+1} \leq  \E|\stp{1} + \cdots + \stp{k}| = 2\E(\stp{1} + \cdots + \stp{k})^+ \leq 2 \E W^{\text{\sc opt}'}_{k+1}.
\end{equation*}
As $\E \Wsvf_{k+1}/\E W^{\text{\sc opt}'}_{k+1}$ is now bounded by 2, Theorem~\ref{generalComp} follows from Lemma~\ref{OneWaitingTime}.
\end{proofof}

\begin{proofof}{Theorem~\ref{nonsymmComp}.}
Note that Lemma~\ref{OneWaitingTime} and the lower bound (\ref{convexLB}) are valid without the symmetry assumption being needed. We therefore only need an upper bound on $\E\Wsvf_{k+1}$.
 
Let $X_1',X_2',\dots,X_n'$ have the same distributions as respectively $X_1,X_2,\dots,X_n$ such that all these random variables are independent. Let $W_{k+1}'$  be the maximum of the random walk with steps $X_k-X_{k}', X_{k-1} -X_{k-1}', \dots, X_1-X_{1}'$. As 
\begin{align*}
\E[X_i-X_{i}'|X_i]=X_i, 
\end{align*}
we see using Lemma~\ref{condExp} that $X_i\cleq X_i-X_{i}'$. 
Note that $W_{k+1}=\max\{0,S_1,\dots,S_k\}$ is a convex function in $X_i$, as it is the maximum of functions linear in $X_i$.
Therefore, each time we replace a step $X_i$ with a step $X_i-X_{i}'$ the expected maximum of the random walk will increase, so $\E \Wsvf_{k+1}\leq \E W_{k+1}'$.

Now note that the steps $X_i-X_{i}'$ all have a symmetric distribution, so we can apply Lemma~\ref{reflectedUB} to find
\begin{align*}
\E \Wsvf_{k+1}&\leq \E W_{k+1}'\leq \E|X_1+\dots+X_k-(X_1'+\dots+X_k')|\\
&\leq 2\E|X_1+\dots+X_k| =4\E(X_1+\dots+X_k)^+\leq 4\E W^{\text{\sc opt}'}_{k+1}.
\end{align*}
As $\E \Wsvf_{k+1}/\E W^{\text{\sc opt}'}_{k+1}$ is now bounded by 4, the result follows from Lemma~\ref{OneWaitingTime}. 
\end{proofof}

\begin{remark}\label{REM1}
    In case the scheduled session end time equals the expected total service time, the 
overtime reads
\begin{align*}
W_{n+1}=\left(W_n+\stp{\tau(n)}\right)^+,
\end{align*}
which can also be included in the cost function. 
As such, overtime is handled similarly to waiting time, and consequently the results of Theorems~\ref{generalComp} and \ref{nonsymmComp} remain valid when some extra term $c\,\E W_{n+1}$ with $c>0$ is added to the cost function.
\end{remark}


\subsection{Normally distributed service times}\label{normal}

The results of Theorems \ref{generalComp} and \ref{nonsymmComp} can be strengthened for specific service-time distributions. One such result is the following.

\begin{theorem}\label{thm:normal}
    When the $B_i$ are all normally distributed we have $\varrho_{\omega}\leq 4(\sqrt2-1)$.
\end{theorem}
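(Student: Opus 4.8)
The plan is to invoke Lemma~\ref{OneWaitingTime} (valid here since normal service times automatically satisfy Assumption~\ref{AStochDom} once indexed so that $\sigma_1^2\le\cdots\le\sigma_n^2$), so that it suffices to prove, for each $k$,
$\E \Wsvf_{k+1}\le 4(\sqrt2-1)\,\E W^{\text{\sc opt}'}_{k+1}$.
Write $V_k:=\sigma_1^2+\cdots+\sigma_k^2$. Since the $X_i=B_i-\mu_i$ are independent and centred normal, every partial sum occurring below is again a centred normal whose variance we can read off; in particular $S_k\sim\mathcal N(0,V_k)$, so $\E|S_k|=\sqrt{2V_k/\pi}$ and $\E S_k^+=\tfrac12\sqrt{2V_k/\pi}$. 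The single parameter that will control everything is $s:=\sigma_k^2/V_k\in(0,1]$, the fraction of the total variance carried by the patient with the largest variance.

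For the numerator I would refine the reflection bound of Lemma~\ref{reflectedUB}. From the Lindley recursion $\Wsvf_{k+1}=(\Wsvf_k+X_k)^+$ and the fact that $\Wsvf_k$ is stochastically dominated by $|Z|$ with $Z\sim\mathcal N(0,V_{k-1})$ (Lemma~\ref{reflectedUB} applied to the first $k-1$ patients), monotonicity of $t\mapsto(t+X_k)^+$ gives
$\E\Wsvf_{k+1}\le\E(|Z|+X_k)^+=\tfrac12\E\big||Z|+X_k\big|+\tfrac12\E|Z|$,
with $X_k\sim\mathcal N(0,\sigma_k^2)$ independent of $Z$; this is an explicit function of $s$, equal to $\sqrt{2V_k/\pi}$ when $s\to0$ and to $\tfrac12\sqrt{2V_k/\pi}$ at $s=1$, and strictly smaller than the crude bound $\E|S_k|$ whenever $s>0$.

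For the denominator, besides $\E W^{\text{\sc opt}'}_{k+1}\ge\E S_k^+$ (inequality~\eqref{convexLB}, which needs no symmetry), I would use a two-point lower bound. For any admissible ordering $\tau$ set $R_j:=\sum_{i=j}^k X_{\tau(i)}$; then $W_{k+1}\ge\max\{0,R_1,R_j\}=\big(R_j+(R_1-R_j)^+\big)^+$, and $R_j$ and $R_1-R_j$ are independent centred normals whose variances sum to $V_k$. The variances $\var R_k\le\var R_{k-1}\le\cdots\le\var R_1=V_k$ increase in steps of size at most $\sigma_k^2=sV_k$, so (excepting the case in which the patient in the last slot already has variance exceeding $V_k/2$, which one treats separately since then the largest patient is last in both sequences) one can pick $j$ with $\var R_j$ within $sV_k/2$ of $V_k/2$. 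Evaluating $\E\big(R_j+(R_1-R_j)^+\big)^+$ for $\var R_j=\beta V_k$ gives $\widetilde h(\beta)\sqrt{2V_k/\pi}$ for an explicit function $\widetilde h$ that is unimodal on $[0,1]$ with $\widetilde h(0)=\widetilde h(1)=\tfrac12$ and — after the Gaussian integral is carried out and simplified — $\widetilde h(\tfrac12)=\tfrac{\sqrt2+1}{4}$. Hence $\E W^{\text{\sc opt}'}_{k+1}\ge\widetilde h\big(\tfrac12\pm\tfrac s2\big)\sqrt{2V_k/\pi}$ (taking the larger evaluation), which tends to $\tfrac{\sqrt2+1}{4}\sqrt{2V_k/\pi}$ as $s\to0$.

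The theorem then reduces to a one-variable estimate: the ratio of the upper bound on $\E\Wsvf_{k+1}$ to the lower bound on $\E W^{\text{\sc opt}'}_{k+1}$, viewed as a function of $s\in(0,1]$, never exceeds $4/(\sqrt2+1)=4(\sqrt2-1)$, the supremum being approached only as $s\to0$ (where the reflection bound on the \svf{} cost is essentially tight while the lower bound approaches $\tfrac{\sqrt2+1}{4}\sqrt{2V_k/\pi}$). I expect this last step to be the main obstacle: one must check that as $s$ grows the gain in the \svf{} upper bound (from the $\E(|Z|+X_k)^+$ estimate) always outpaces the loss in $\widetilde h(\tfrac12\pm\tfrac s2)$, which requires quantitative monotonicity/concavity information about $\widetilde h$ and about $s\mapsto\E(|Z|+X_k)^+/\sqrt{2V_k/\pi}$, plus a clean handling of the ``dominant last patient'' case. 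Since the worst regime $s\to0$ is precisely one where the true ratio is in fact $1$, this also makes transparent the authors' remark that the constant $4(\sqrt2-1)$ is presumably not sharp.
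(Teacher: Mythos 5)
Your proposal is correct and follows essentially the same route as the paper: reduce to the single waiting time $\E W^{\text{\sc opt}'}_{k+1}$ via Lemma~\ref{OneWaitingTime}, bound $\E \Wsvf_{k+1}$ by $\E(|S_{k-1}|+X_k)^+=(\Sigma_k+\Sigma_{k-1})/\sqrt{2\pi}$ (this is exactly Lemma~\ref{UB}), and lower-bound any admissible sequence by a two-point maximum $\max\{0,S_{k-\ell},S_k\}$ with the split chosen so the partial variance is within $\sigma_k^2$ of half the total (your $\widetilde h(\beta)=\tfrac14(1+\sqrt{\beta}+\sqrt{1-\beta})$ is precisely the paper's Lemma~\ref{LB} specialized to Gaussians), after which the same one-parameter optimization yields the constant $4/(1+\sqrt2)=4(\sqrt2-1)$ in the limit $s\to 0$, i.e.\ $c\to\infty$ in the paper's parametrization.
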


In order to prove Theorem~\ref{thm:normal}, we need the following two lemmas, giving stronger bounds on $\E\Wsvf_{k+1}$ and $\E W^{\text{\sc opt}'}_{k+1}$. The proofs of these lemmas, that hold for any symmetrically distributed service times, can be found in Appendix~\ref{App3}.

\begin{lemma}\label{UB}
Under the symmetry assumption, 
\begin{equation*}
\E W_{k+1} \leq  \E\left(\stp{\tau(1)} + \cdots + \stp{\tau(k)} \right)^+
                  \;+\; \E\left(\stp{\tau(1)} + \cdots + \stp{\tau(k-1)}\right)^+.
\end{equation*}
\end{lemma}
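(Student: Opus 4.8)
The plan is to peel off the first step of the random walk whose running maximum is $W_{k+1}$, thereby reducing matters to the running maximum of a walk of length $k-1$, to which Lemma~\ref{reflectedUB} applies; a second use of symmetry then turns the resulting absolute-value bound into the two one-sided terms in the statement. Throughout I only use that each $X_i=B_i-\mu_i$ is symmetric around $0$ (which is exactly what Assumption~\ref{ASymm} gives), so the proof works for any $\tau$ and any symmetric service times.

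Recall from \eqref{Wmean} that $W_{k+1}=\max\{0,S_1,\dots,S_k\}$, where the walk has steps $X_{\tau(k)},X_{\tau(k-1)},\dots,X_{\tau(1)}$ in that order. Writing
\[
M:=\max\bigl\{0,\;X_{\tau(k-1)},\;X_{\tau(k-1)}+X_{\tau(k-2)},\;\dots,\;X_{\tau(k-1)}+\cdots+X_{\tau(1)}\bigr\}
\]
for the running maximum of the walk \emph{after} its first step, one checks directly that $W_{k+1}=(X_{\tau(k)}+M)^+$, and $M$ is a function of $X_{\tau(1)},\dots,X_{\tau(k-1)}$ only, hence independent of $X_{\tau(k)}$. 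Applying Lemma~\ref{reflectedUB} to the length-$(k-1)$ walk (its steps are symmetric around $0$, so the hypotheses hold) yields $M\sleq |Z|$, where $Z:=X_{\tau(1)}+\cdots+X_{\tau(k-1)}$. Since $x\mapsto (y+x)^+$ is nondecreasing for each fixed $y$, and $X_{\tau(k)}$ is independent of both $M$ and $Z$, conditioning on $X_{\tau(k)}$ and using the stochastic domination gives
\[
\E W_{k+1}=\E\bigl(X_{\tau(k)}+M\bigr)^+\;\le\;\E\bigl(X_{\tau(k)}+|Z|\bigr)^+ .
\]

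It then remains to establish the identity $\E(X_{\tau(k)}+|Z|)^+=\E(X_{\tau(k)}+Z)^+ + \E Z^+$; substituting $X_{\tau(k)}+Z=X_{\tau(1)}+\cdots+X_{\tau(k)}$ and $Z=X_{\tau(1)}+\cdots+X_{\tau(k-1)}$ then gives exactly the claimed bound. To prove the identity I would condition on $|Z|=s$: as $Z$ is a sum of independent symmetric random variables it is symmetric around $0$, so given $|Z|=s>0$ the sign of $Z$ is a fair coin independent of $X_{\tau(k)}$, and the identity reduces, for each $s\ge 0$, to $\E(Y+s)^+=\E(Y-s)^+ + s$ with $Y:=X_{\tau(k)}$ symmetric. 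This last identity follows from the elementary fact that $h(y):=(y+s)^+-(y-s)^+$ satisfies $h(y)+h(-y)=2s$ for every $y$ (a three-region case analysis on $y\ge s$, $-s\le y\le s$, $y\le -s$), so that $\E h(Y)=\tfrac12\E\bigl(h(Y)+h(-Y)\bigr)=s$ using $Y\dis -Y$.

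I expect the only genuinely delicate step to be the final identity: one must handle the conditioning on $|Z|$ cleanly (the atom of $Z$ at $0$, where the identity is trivial, and the independence used to split the conditional expectation into the two one-sided pieces), together with the elementary verification that $h(y)+h(-y)=2s$. Everything else is an immediate consequence of Lemma~\ref{reflectedUB} and monotonicity of $(\,\cdot\,)^+$; in particular, no use of normality (or of the dilation ordering) is made, only symmetry of the individual service times.
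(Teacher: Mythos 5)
Your proposal is correct and follows essentially the same route as the paper: peel off the first step of the walk (equivalently, use the Lindley recursion $W_{k+1}=(W_k+X_{\tau(k)})^+$, since your $M$ is exactly $W_k$), dominate $W_k$ by $|X_{\tau(1)}+\cdots+X_{\tau(k-1)}|$ via Lemma~\ref{reflectedUB}, and then apply the identity $\E(|Z|+Y)^+=\E(Z+Y)^+ +\E Z^+$ for independent symmetric $Z,Y$. The only difference is that you spell out the proof of that identity (via $h(y)+h(-y)=2s$ and conditioning on $|Z|$), which the paper leaves as an easy check.
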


\begin{lemma}\label{LB}
Under the symmetry assumption, for any $\ell$,
\begin{align*}
\E W_{k+1} \geq  \frac12\bigg(\E\left(\stp{\tau(1)} + \cdots + \stp{\tau(k)}\right)^+ 
+\: \E\left(\stp{\tau(1)} + \cdots + \stp{\tau(\ell)}\right)^+ \:\\
+\:\E\left(\stp{\tau(\ell+1)} + \cdots + \stp{\tau(k)}\right)^+\bigg).
\end{align*}
\end{lemma}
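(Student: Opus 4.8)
The plan is to work from the random-walk representation $W_{k+1}=\max\{0,S_1,\dots,S_k\}$ of \eqref{Wmean} and to discard all of the partial sums except two judiciously chosen ones. Put $m:=k-\ell$, so that $S_m=\stp{\tau(\ell+1)}+\cdots+\stp{\tau(k)}$, $S_k=\stp{\tau(1)}+\cdots+\stp{\tau(k)}$ and $S_k-S_m=\stp{\tau(1)}+\cdots+\stp{\tau(\ell)}$. Since $S_m$ and $S_k$ are among the terms in the maximum (the cases $\ell=0$ and $\ell=k$ being degenerate but still covered), we have $W_{k+1}\ge\max\{0,S_m,S_k\}$, so it suffices to prove the (in fact exact) identity
\[
\E\max\{0,S_m,S_k\}=\tfrac12\big(\E S_k^+ + \E(S_k-S_m)^+ + \E S_m^+\big).
\]
Write $U:=S_m$ and $V:=S_k-S_m$; under the symmetry assumption these are independent, each symmetric about $0$ with mean $0$ (recall $\stp{i}=B_i-\mu_i$), and $S_k=U+V$, so the right-hand side above equals $\tfrac12\big(\E(U+V)^+ + \E V^+ + \E U^+\big)$.

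The first step is a reflection in $V$: because $V$ is symmetric and independent of $U$, the law of $\max\{0,U,U+V\}$ is unchanged under $V\mapsto -V$, whence
\[
\E\max\{0,U,U+V\}=\tfrac12\,\E\big[\max\{0,U,U+V\}+\max\{0,U,U-V\}\big].
\]
For fixed $U$ and $V$, one of $U+V$, $U-V$ is $\ge U$ and the other is $\le U$, and so (in either case) the bracketed quantity collapses to $(U+|V|)^+ + U^+$. Taking expectations gives $\E\max\{0,U,U+V\}=\tfrac12\E(U+|V|)^+ + \tfrac12\E U^+$, so it remains only to identify $\E(U+|V|)^+$ with $\E(U+V)^+ + \E V^+$.

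Conditioning on $|V|=v\ge 0$ and using $\E[V^+\mid |V|=v]=v/2$ together with $\E\big[(U+V)^+\mid |V|=v\big]=\tfrac12\E(U+v)^+ + \tfrac12\E(U-v)^+$, this reduces to showing $\E(U+v)^+=\E(U-v)^+ + v$ for every $v\ge 0$; here I would use that $U$ is symmetric, so $\E(U-v)^+=\E(U+v)^-$, and mean zero, so $\E(U+v)^+-\E(U-v)^+=\E(U+v)^+-\E(U+v)^-=\E(U+v)=v$. Assembling the three displays proves the identity and hence the lemma. I do not expect a genuinely hard step: the care needed is in the block bookkeeping for $U$, $V$, $U+V$ and in checking that the bracketed sum collapses as claimed. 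The one mild subtlety worth flagging is that the pointwise inequality one might first try, namely $\max\{0,U,U+V\}\ge\tfrac12\big((U+V)^+ + V^+ + U^+\big)$, is \emph{false}, so one genuinely must pass to expectations and exploit symmetry from the outset rather than argue sample-pathwise.
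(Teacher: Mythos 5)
Your proposal is correct and follows essentially the same route as the paper: both discard all partial sums except $S_{k-\ell}$ and $S_k$, observe $\max\{0,U,U+V\}=(U+V^+)^+$ for the two independent symmetric blocks, and establish the exact identity $\E(U+V^+)^+=\tfrac12\bigl(\E(U+V)^++\E U^++\E V^+\bigr)$. The only difference is cosmetic: the paper verifies this identity by conditioning on $|U|$ and $|V|$ simultaneously (a four-point check), whereas you reflect $V$ first and then use the mean-zero property of $U$ — both are valid elementary verifications of the same key fact.
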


\begin{proofof}{Theorem \ref{thm:normal}.}
Note that normal distributions satisfy both the dilation ordering and symmetry assumption.
Now the sum $\stp{1} + \cdots + \stp{i}$ again has a normal distribution, with mean zero and variance $\Sigma_i^2:= \sigma_1^2+\dots+\sigma_i^2$.
For the \svf{} sequence we now have, using Lemma~\ref{UB}, that
\begin{equation}\label{eq:normalub}
\E \Wsvf_{k+1} \leq  \frac{1}{\sqrt{2\pi}}\left(\Sigma_k+ \Sigma_{k-1}\right).
\end{equation}

Now we still need an expression for a lower bound on $\E W^{\text{\sc opt}'}_{k+1}$.
Let $\tilde{\Sigma}_i^2:=\sigma_{\tau(1)}^2+\dots+\sigma_{\tau(i)}^2$ be the variance of $\stp{\tau(1)} + \cdots + \stp{\tau(i)}$. 
From Lemma~\ref{LB} it then follows that
\begin{equation*}
\E W_{k+1}\geq  \frac12 \left(\tilde{\Sigma}_k+\tilde{\Sigma}_\ell+\sqrt{\tilde{\Sigma}_k^2-\tilde{\Sigma}_\ell^2}\right).
\end{equation*}

Recall that $\E W^{\text{\sc opt}'}_{k+1}$ was the optimal expected waiting time when $\tau(i)\leq  k$ whenever $i\leq  k$. Therefore, we have $\tilde{\Sigma}_k=\Sigma_k$ and $\sigma_k^2=\max\{\sigma_{\tau(1)}^2,\dots,\sigma_{\tau(k)}^2\}$. Now note that 
\begin{equation*}
\Sigma_k+\tilde{\Sigma}_\ell+\sqrt{\Sigma_k^2-\tilde{\Sigma}_\ell^2}
\end{equation*}
is largest when $\tilde{\Sigma}_\ell^2$ is as close to $\frac12\Sigma_k^2$ as possible. As $\sigma_k^2$ is largest of the $\sigma_{\tau(i)}^2$ with $i\leq  k$, we can always choose $\ell$ such that 
\begin{equation*}
\frac12\Sigma_{k-1}^2\leq  \tilde{\Sigma}_\ell^2\leq  \frac12\Sigma_{k-1}^2+\sigma_k^2.
\end{equation*}
This choice of $\ell$ provides us with the lower bound
\begin{equation*}
\E W^{\text{\sc opt}'}_{k+1} \geq  \frac12\frac{1}{\sqrt{2\pi}}\left(\Sigma_k + \sqrt{\frac12\Sigma_{k-1}^2} +\sqrt{\frac12\Sigma_{k-1}^2+\sigma_k^2}\right),
\end{equation*}
valid for any sequence. 
Comparing with \eqref{eq:normalub}, we obtain 
\begin{equation*}
\frac{\E \Wsvf_{k+1}}{\E W^{\text{\sc opt}'}_{k+1}} \leq 
2(\Sigma_k+\Sigma_{k-1})\bigg/\left(\Sigma_k + \sqrt{\frac12\Sigma_{k-1}^2} +\sqrt{\frac12\Sigma_{k-1}^2+\sigma_k^2}\right).
\end{equation*}
As $\Sigma_k^2=\Sigma_{k-1}^2+\sigma_k^2$, this fraction only depends on the relative size of $\Sigma_{k-1}^2$ compared to $\sigma_k^2$. Suppose that $\Sigma_{k-1}^2=c\sigma_k^2$, for some $c\geq  0$. Then $\Sigma_k^2=(c+1)\sigma_k^2$, and the fraction becomes
\begin{equation*}
\frac{2(\sqrt{c+1}+\sqrt{c})}{\sqrt{c+1}+\sqrt{\frac12c}+\sqrt{\frac12c+1}} =:f(c).
\end{equation*}
It can easily be seen that $f$ is increasing, and that $f(c)\to 4(\sqrt2-1)$ as $c\to\infty$. 

We now know that $\E \Wsvf_{k+1}/\E W^{\text{\sc opt}'}_{k+1}\leq4(\sqrt2-1)\approx 1.6569$.
By Lemma~\ref{OneWaitingTime} the same is then also true for the cost function.
This proves Theorem~\ref{thm:normal}. 
\end{proofof}


\subsection{Numerically improving the bound of Theorem \ref{nonsymmComp}}\label{lognormal}

Under the dilation ordering assumption, we have proved that $\varrho_{\omega}\leq 4$, and we also proved that $\varrho_{\omega}\leq 2$ when the service times have symmetric distributions. This suggests that one can find an upper bound on $\varrho_{\omega}$ between 2 and 4 for service-time distributions that have some degree of symmetry, but are not fully symmetric. 

Here we introduce a method to split the service time distributions into a symmetric and a nonsymmetric part, thus isolating the effect of the asymmetry on the upper bound. This can be used to numerically compute an upper bound on $\varrho_{\omega}$ (lower than 4, that is) for given problem instances. In this section we do so for lognormal service time distributions that fit real data in \cite{Cayirli}. We still impose the dilation ordering assumption.

We introduce the method for continuously distributed service times to simplify the exposition, noting that extending the method to non-continuous distributions is straightforward. Suppose $X_i$ has density $f_i(x)$. We set $g_i(x):=\min\{f_i(x),f_i(-x)\}$, $h_i(x):=f_i(x)-g_i(x)$ and $p_i:=\int_\R h_i(x)\mathrm{d}x$. Then we let $U_i$ be a random variable with density $g_i(x)/(1-p_i)$. We let $A_i$ be a random variable, independent of $U_i$, with density $h_i(x)/p_i$. Let $\bern_i$ be a Bernoulli variable taking the value one with probability $p_i$, independent of $A_i$ and $U_i$. We thus have
\begin{align*}
    X_i\dis U_i(1-\bern_i) + A_i\bern_i.
\end{align*}
Note that this construction is such that $U_i$ has a symmetric distribution around zero. One thus has that $U_i$ corresponds to the symmetric part of $X_i$, whereas $A_i$ to the nonsymmetric part. Note that $\E X_i=0$ and $\E U_i=0$, so we must have $\E A_i=0$. Let $A_i'$ have the same distribution as $A_i$, so that $A_i'$ is independent of all the other random variables. 
Since $\E[A_i'\bern_i\,|\,\bern_i] = 0$, we have
\begin{align*}
    \E\left[U_i(1-\bern_i)+(A_i-A_i')\bern_i\,\big|\,U_i(1 - \bern_i)+A_i\bern_i\right]= U_i(1-\bern_i) +A_i\bern_i.
\end{align*}
By Lemma~\ref{condExp} we conclude
\begin{align*}
    X_i\cleq U_i(1-\bern_i) + (A_i-A_i')\bern_i.
\end{align*}
As $W_{k+1}$ is a convex function in each of the $X_i$, we can then replace each $X_i$ by this upper bound in convex order to get an upper bound on $\E\Wsvf_{k+1}$. Using Lemma~\ref{reflectedUB}, we then find the following upper bound:
\begin{align}\label{compa}
    \E \Wsvf_{k+1} \leq \E|X_1+\dots+X_k|+\E|A_1\bern_1 + \dots+A_k\bern_k|.
\end{align}

Note that the second term in the right-hand side of \eqref{compa} can be numerically evaluated to any desired level of precision, e.g.\ by simulation. 
The resulting upper bound can thus be compared numerically to the lower bound $\E W^{\text{\sc opt}'}_{k+1}\geq\E(X_1+\dots+X_k)^+$, for each $k$, valid under the dilation ordering assumption. Combining the above with Lemma~\ref{OneWaitingTime}, this leads to the bound given in the next theorem. 

\begin{theorem}\label{betterThan4}
Under the dilation ordering assumption, we have
\begin{align}\label{UBA}
    \varrho_{\omega}\leq 2+2\max\bigl\{\E|A_1\bern_1 + \dots+A_k \bern_k|/\E|X_1+\dots+X_k|: k=1,\dots,n\bigr\}.
\end{align}
\end{theorem}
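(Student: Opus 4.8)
The plan is to assemble the two estimates already in hand and take a ratio; everything substantive has been done in the discussion preceding the statement. The convex-order decomposition $X_i\cleq U_i(1-\bern_i)+(A_i-A_i')\bern_i$ (via Lemma~\ref{condExp}), the convexity of $W_{k+1}$ in each step, and the reflection estimate of Lemma~\ref{reflectedUB} together give the upper bound \eqref{compa}, $\E\Wsvf_{k+1}\leq\E|X_1+\dots+X_k|+\E|A_1\bern_1+\dots+A_k\bern_k|$, while the lower bound \eqref{convexLB}, $\E W^{\text{\sc opt}'}_{k+1}\geq\E(X_1+\dots+X_k)^+$, is inherited verbatim from the proof of Theorem~\ref{nonsymmComp}. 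So the remaining task is purely arithmetic.

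First I would record the elementary identity that makes the two bounds commensurable: since $\E X_i=0$ for every $i$, the partial sum $X_1+\dots+X_k$ has mean zero, so $\E(X_1+\dots+X_k)^+=\E(X_1+\dots+X_k)^-=\tfrac12\E|X_1+\dots+X_k|$. Substituting into \eqref{convexLB} gives $\E W^{\text{\sc opt}'}_{k+1}\geq\tfrac12\,\E|X_1+\dots+X_k|$. Dividing \eqref{compa} by this and simplifying yields, for each $k$,
\[
\frac{\E\Wsvf_{k+1}}{\E W^{\text{\sc opt}'}_{k+1}}\;\leq\;\frac{\E|X_1+\dots+X_k|+\E|A_1\bern_1+\dots+A_k\bern_k|}{\tfrac12\,\E|X_1+\dots+X_k|}\;=\;2+2\,\frac{\E|A_1\bern_1+\dots+A_k\bern_k|}{\E|X_1+\dots+X_k|}.
\]
Maximising the right-hand side over $k\in\{1,\dots,n\}$ gives a single constant $\varrho'$ with $\E\Wsvf_{k+1}/\E W^{\text{\sc opt}'}_{k+1}\leq\varrho'$ for all $k$; Lemma~\ref{OneWaitingTime} (which is precisely where the dilation ordering hypothesis enters) then upgrades this to $\varrho_\omega\leq\varrho'$, i.e.\ the asserted bound \eqref{UBA}.

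There is essentially no obstacle here; the one point requiring a word is the degenerate case in which $\E|X_1+\dots+X_k|=0$ for some $k$. Then $X_1,\dots,X_k$ are independent mean-zero random variables summing to zero almost surely, hence each is almost surely $0$; consequently $\E\Wsvf_{k+1}=\E W^{\text{\sc opt}'}_{k+1}=0$, the corresponding asymmetric term $\E|A_1\bern_1+\dots+A_k\bern_k|$ vanishes as well, and that index simply drops out of the maximum (equivalently, it imposes no constraint in Lemma~\ref{OneWaitingTime}). Apart from this bookkeeping, the proof is a one-line ratio estimate followed by an appeal to Lemma~\ref{OneWaitingTime}.
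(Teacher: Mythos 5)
Your proposal is correct and follows essentially the same route as the paper: the upper bound \eqref{compa}, the lower bound $\E W^{\text{\sc opt}'}_{k+1}\geq\E(X_1+\dots+X_k)^+=\tfrac12\E|X_1+\dots+X_k|$, a ratio, and an appeal to Lemma~\ref{OneWaitingTime} are exactly the ingredients the paper combines in the discussion preceding the theorem. Your explicit treatment of the degenerate case $\E|X_1+\dots+X_k|=0$ is a small tidying-up that the paper omits.
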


The more symmetric the service times and thus the random variables $X_i$, the smaller the $p_i$ and hence also the upper bound in (\ref{UBA}). When the service times are completely symmetric, the asymmetric parts $A_i$ will be zero, and we recover the upper bound of 2 of Theorem~\ref{generalComp}.

Note that the upper bound in Theorem~\ref{betterThan4} is much easier to numerically compute or simulate than $\varrho_{\omega}$ itself, as for the latter one needs to go over all $n!$ possible sequences to find the optimal one. Also, this method can be used to find an upper bound on $\varrho_{\omega}$ for any problem instance where the service times come from a finite set of distributions and an upper bound on $n$ is given, as illustrated in the next example.

\begin{table}
    \centering
        \renewcommand{\arraystretch}{1.2}
    \begin{tabular}{|l|c|c|}
        \hline
        Group & \hspace{1.1cm}Mean \hspace{1cm} & Standard deviation\\
        \Xhline{4\arrayrulewidth}
        Return & $15.50$ & $5.038$\\ \hline
        New & $19.09$ & $6.85$\\ \hline
    \end{tabular}
    \caption{Parameters of the lognormal distributions fitted by \cite{Cayirli}.}\label{tbl:cayirli}
\end{table}

\begin{example} 
We base this example on the distributions fitted to health care data in \cite{Cayirli}. There, patients were divided in two groups: new and return patients. For both groups, lognormal distributions were found as a good fit to the data used in the paper, with parameters as shown in Table~\ref{tbl:cayirli}.
We checked that problem instances coming from these two distributions satisfy both $\E[ \ln B_1]\leq \dots \leq \E[\ln B_n]$ and $\Var( \ln B_1)\leq \dots\leq \Var( \ln B_n)$, and therefore satisfy the dilation ordering assumption. It was also mentioned that the doctor that provided the data sees 10 patients per session.

We now consider 11 problem instances, each of them corresponding to  $n=10$ patients. In the $k$-th instance $k-1$ patients have the first lognormal service-time distribution, whereas the remaining $11-k$ patients have the other lognormal distribution, with $k=1,\ldots,11$.
We compute the upper bound in (\ref{UBA}) for each of these instances through simulation. Note that we can simulate $A_iJ_i$ by first drawing a random instance of $X_i$, and then setting $A_i$ equal to $X_i$ with probability $h_i(X_i)/f_i(X_i)$ and equal to zero otherwise. Simulating 1\,000\,000 random instances of $X_1,\dots,X_n$ then allows us to compute $\E|X_1+\dots+X_k|$ and $\E|A_1J_1+\dots+A_kJ_k|$ for $k=1,\dots,n$, and thus the upper bound in (\ref{UBA}) for each of the 11 problem instances. Note that smaller problem instances are automatically included because we compute $\E|X_1+\dots+X_k|/\E|A_1J_1+\dots+A_kJ_k|$ for all $k$.
We find that $\varrho_{\omega}\leq 3.43$ for any problem instance consisting of at most 10 patients, with service times that follow one of the two lognormal distributions. We thus conclude that in this example $\varrho_{\omega} \leq 3.43$ for any problem instance.
\end{example}


\section{Asymptotic optimality of \svf{} under mean-based schedules}\label{sec:asymptotic}

The problem of finding the optimal sequence gets increasingly difficult as the number of patients grows large. In the numerical experiments described in Appendix~\ref{MBNumerics} we could only find the optimal sequence for up to 10 or 11 patients, but in some applications more than 20 patients need to be scheduled in a session (see e.g.~\cite{Klassen}). Therefore, we would like to know how the approximation ratio behaves as $n$ grows larger. 
In this section we assess the performance of the \svf{} sequence as the number of patients tends to infinity.
Throughout this section we  assume that the schedule is mean-based: the time planned for each appointment is equal to the corresponding mean service time. 
The goal in this section is to prove that the \svf{} sequence is asymptotically optimal as the number of patients tends to infinity, under a weak assumption. 

We consider the setting in which we are given, for each value of $n$, a vector ${\bm{B}}_n = (\sti{1}{n}, \sti{2}{n}, \ldots, \sti{n}{n})$ of service-time distributions.
For $i \leq n$, let $\mean{i}{n}$ and $\sdev{i}{n}^2$ denote the mean and variance of $\sti{i}{n}$, and let $\stpn{i} := \sti{i}{n} - \mean{i}{n}$ for all $i \leq n$. 
Similarly, 
$W_{n,i}$, and $I_{n,i}$ are all with respect to the service-time distributions ${\bm{B}}_n$, and an implicit fixed permutation $\tau \in \sym_n$.
Note that Example~\ref{noconstbound} shows that $\varrho_\omega$ cannot be bounded without imposing some assumption, even when $n$ tends to infinity. 
We do not need Assumption~\ref{AStochDom} (dilation ordering assumption) in this section.
Instead, we impose the following assumption, similar to the Lyapunov condition of the Lyapunov-version of the central limit theorem (CLT), concerning the order of magnitude of the $(2+\delta)$-th moments of the service times.
The difference between our assumption and the conventional Lyapunov condition is the supremum  over all $n\geq k$ and all sequences $\tau$.
\begin{assumption}\label{assumption}
We assume that there exists a $\delta>0$ such that, as $k\to\infty$,
\begin{align*}
    q_k:= \sup_{n\geq  k,\tau \in \sym_n} \frac{1}{\sqrt{\sum_{i=1}^k \sdev{\tau(i)}{n}^2}^{2+\delta}} \sum_{i=1}^k \E|\stpn{\tau(i)}|^{2+\delta}\to 0.
\end{align*}
\end{assumption}

The main result of this section is the following.
\begin{theorem}\label{!}
    Under Assumption $\ref{assumption}$, $\varrho_{\omega}({\bm{B}}_n) \to 1$ as $n\to\infty$.
\end{theorem}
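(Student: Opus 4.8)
\noindent\emph{Proof strategy.} Fix $\epsilon>0$. The first step is a reduction: taking expectations in \eqref{I} with $x_i=\mean{i}{n}$ gives $\sum_{i=1}^n\E I_{n,i}=\E W_{n,n}$ for \emph{every} sequence (no distributional assumption is used here), so by \eqref{Wmean} the cost under any $\tau$ equals $\omega\,\E W_{n,n}+(1-\omega)\sum_{i=1}^n\E W_{n,i}$, and the whole problem reduces to controlling, slot by slot, the expected running maximum $\E W_{n,k+1}=\E\max\{0,S_1,\dots,S_k\}$ of a mean-zero random walk whose steps are the $\stpn{\tau(i)}$ with $i\le k$, taken in reverse order. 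Write $\Sigma_{k,n}^2:=\sdev{1}{n}^2+\dots+\sdev{k}{n}^2$, the sum of the $k$ smallest variances.

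The technical heart — and the step I expect to be the main obstacle — is a \emph{uniform} functional central limit theorem: I claim there is $K=K(\epsilon)$ such that for every $k\ge K$, every $n\ge k$, and every $\tau\in\sym_n$,
\begin{equation*}
\Bigl|\,\E\max\{0,S_1,\dots,S_k\}-\sqrt{2/\pi}\;s_k\,\Bigr|\le\epsilon\,s_k,\qquad s_k^2:=\textstyle\sum_{i=1}^k\sdev{\tau(i)}{n}^2,
\end{equation*}
the inequality being trivial if $s_k=0$. I would prove this by contradiction: if it fails along some $k_m\to\infty$ with witnesses $(n_m,\tau_m)$, then, since the Lyapunov ratio of each of these triangular arrays of $k_m$ independent mean-zero summands is one of the terms in the supremum defining $q_{k_m}$ and $q_{k_m}\to0$ by Assumption~\ref{assumption}, the Lyapunov (hence Lindeberg) condition holds along the sequence. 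A Donsker-type invariance principle for triangular arrays then yields convergence of the rescaled partial-sum processes to standard Brownian motion $\beta$ on $[0,1]$, so by the continuous mapping theorem $\max\{0,S_1,\dots,S_{k_m}\}/s_{k_m}\Rightarrow\sup_{t\in[0,1]}\beta_t$; Doob's $L^2$-inequality gives $\E\bigl(\max_{j\le k_m}S_j/s_{k_m}\bigr)^2\le4$, so these variables are uniformly integrable and their expectations converge to $\E\sup_{t\in[0,1]}\beta_t=\E|\beta_1|=\sqrt{2/\pi}$, a contradiction. Extracting uniformity over $n$, over $\tau$, and over the varying size of the relevant index set is exactly the role played by the supremum in Assumption~\ref{assumption}.

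Granting the claim, fix $K=K(\epsilon)$ and take $n$ large. For the \svf{} sequence, slot $k+1$ involves precisely patients $1,\dots,k$, so $s_k^2=\Sigma_{k,n}^2$ and the claim gives $\E\Wsvf_{k+1}\le(1+\epsilon)\sqrt{2/\pi}\,\sqrt{\Sigma_{k,n}^2}$ for all $k\ge K$. For an arbitrary sequence $\tau$ — in particular the optimal one — the first $k$ slots contain \emph{some} $k$ patients, whose variances sum to at least $\Sigma_{k,n}^2$, so $\E W_{n,k+1}\ge(1-\epsilon)\sqrt{2/\pi}\,\sqrt{\Sigma_{k,n}^2}$ for $k\ge K$ under that $\tau$ (and likewise $\E\Wopt_{n}\ge(1-\epsilon)\sqrt{2/\pi}\,\sqrt{\Sigma_{n-1,n}^2}$, since the first $n-1$ slots omit only one patient). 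For the finitely many ``small'' slots $i\le K$ I use the crude estimate $\E\Wsvf_i\le\sum_{j<i}\E|\stpn{j}|\le\sum_{j<i}\sdev{j}{n}\le(i-1)\sdev{K}{n}$, so $\sum_{i\le K}\E\Wsvf_i\le\tfrac12K^2\sqrt{\Sigma_{K,n}^2}$.

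Finally I would assemble these. Put $M:=\omega\sqrt{\Sigma_{n-1,n}^2}+(1-\omega)\sum_{k=K}^{n-1}\sqrt{\Sigma_{k,n}^2}$; we may assume $\Sigma_{n-1,n}^2>0$, so $M>0$ (otherwise the sequence does not affect the cost and $\varrho_\omega=1$). Dropping the nonnegative $i\le K$ terms, the optimal cost is at least $(1-\epsilon)\sqrt{2/\pi}\,M$, while the \svf{} cost is at most $(1+\epsilon)\sqrt{2/\pi}\,M+(1-\omega)\tfrac12K^2\sqrt{\Sigma_{K,n}^2}$. Since $(1-\omega)(n-K)\sqrt{\Sigma_{K,n}^2}\le(1-\omega)\sum_{k=K}^{n-1}\sqrt{\Sigma_{k,n}^2}\le M$, the last term is at most $K^2M/(2(n-K))$, so
\begin{equation*}
\varrho_\omega(\bm{B}_n)\le\frac{1+\epsilon}{1-\epsilon}+\frac{K^2}{2(n-K)(1-\epsilon)\sqrt{2/\pi}}.
\end{equation*}
Letting $n\to\infty$ gives $\limsup_n\varrho_\omega(\bm{B}_n)\le(1+\epsilon)/(1-\epsilon)$, and then $\epsilon\downarrow0$, together with the trivial bound $\varrho_\omega\ge1$, proves the theorem.
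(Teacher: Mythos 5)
Your argument is correct, and its technical core takes a genuinely different route from the paper's. The outer structure coincides: both reduce the cost to $\omega\,\E W_{n,n}+(1-\omega)\sum_i\E W_{n,i}$ via \eqref{I}, both compare slot by slot, and both dispose of the finitely many slots $i\le K$ with an error of order $K/n$ (you via the crude bound $\E\Wsvf_i\le\sum_{j<i}\sigma_{n,j}$, the paper via monotonicity of $k\mapsto\E W_{n,k}$, which absorbs them into $K\,\E\Wopt_{n,K}$). The real divergence is in how the two-sided uniform estimate $\E W_{n,k+1}\approx\E|Z|\cdot\bigl(\sum_{i\le k}\sigma_{n,\tau(i)}^2\bigr)^{1/2}$ is obtained. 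The paper works at the level of the discrete walk: it applies the reflection principle to $(S_j)$ directly, which forces it to handle the fact that the reflected process no longer has independent increments (hence the Heyde--Brown martingale CLT with explicit Berry--Esseen-type rates, Propositions~\ref{UB2} and~\ref{LB2}) and to control the overshoot at the reflection time (hence the truncation at level $c_{n,k}$). You instead pass to the scaling limit: the Lindeberg--Feller invariance principle for triangular arrays (the Lyapunov ratios being dominated by $q_k$ from Assumption~\ref{assumption}), continuous mapping for the supremum functional, uniform integrability from Doob's $L^2$ maximal inequality applied to the nonnegative submartingale $(S_j^+)$, and the identity $\E\sup_{[0,1]}\beta_t=\E|\beta_1|=\sqrt{2/\pi}$; uniformity over $(n,\tau,k\ge K)$ is then extracted by a compactness/contradiction argument, which is legitimate precisely because any sequence of counterexamples must have $k_m\to\infty$ and hence vanishing Lyapunov ratios. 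This route is softer and sidesteps both the martingale CLT and the truncation, but it is purely qualitative: it proves $\varrho_\omega({\bm B}_n)\to1$ while yielding no rate, whereas the paper's explicit constants are exactly what enable the convergence-rate remark $\varrho_\omega({\bm B}_n)=1+O\bigl(n^{-\delta/(12+5\delta)}\bigr)$. Two cosmetic points: your lower bound on $\E\Wopt_{n,n}$ needs $n-1\ge K$, which of course holds for $n$ large; and in the degenerate case $\Sigma_{n-1,n}=0$ the sequence \emph{does} affect the cost (placing the unique random patient early is strictly worse), but \svf{} places it last and both the \svf{} and the optimal cost vanish, so the ratio is degenerate rather than literally equal to $1$ --- this case is immaterial to the theorem.
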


To obtain some intuition about why such a result is reasonable, 
consider normally distributed service times.
Fix some $n$ and $k \leq n$ (both large), and consider the waiting time $W_{n,k+1}$ corresponding to some fixed permutation $\tau$.
Let $(S_j)_{j \leq k}$ be the random walk for which $W_{n,k+1}$ is the maximum; so $S_j = \sum_{i =1}^j X_{n,\tau(k+1-i)}$.
Because the steps are normally distributed, we can embed this random walk within a standard Brownian motion $V(t)$. 
More precisely, we can define the coupling $S_{j} = V(\theta_{j})$, where $\theta_{j} := \sum_{i =1}^j \sigma_{n,\tau(k+1-i)}^2$.
Assumption~\ref{assumption} then tells us simply that no individual service time has a non-negligible fraction of the total variance, in the large $n$ limit.
Thus the gaps $\theta_{j+1} - \theta_{j}$ between sample times become negligibly small in comparison to the interval of interest, namely $[0, \theta_{k}]$.
So the maximum $W_{n,k+1}$ of the random walk is very well approximated by the supremum of the Brownian motion on the interval $[0, \theta_{k}]$.

This supremum is of course completely understood.
By the reflection principle, we know that $\sup_{0 \leq t \leq \theta_k} V(t)$ has the distribution of the absolute value of a normal with variance $\theta_k$ and mean zero.
Since the \svf{} ordering minimizes $\theta_k$ (simultaneously for all choices of $k$), it is thus optimal in the limit.

\medskip

Extending this intuition to general distributions and making it rigorous requires care.
The plan is to apply the reflection principle to the random walk, so that hopefully the magnitude of the final position of the walk
is a good estimate of the maximum of the unreflected walk.
More precisely, fix some value $a > 0$, and let $T(a)$ denote the first step at which the random walk $(S_j)_{j \leq k}$ reaches $a$ (or $T(a) = \infty$ if this event does not occur).
Now define
\begin{equation*}
\hat{S}_{j}(a)=\begin{cases}
    S_{j} &\text{ if } j<T(a) \\
    2a-S_{j} &\text{ if } j \geq  T(a)
\end{cases}
\qquad \text{and} \qquad
\tilde{S}_{j}(a)=\begin{cases}
    S_{j} &\text{ if } j <T(a) \\
    2S_{T(a)} - S_{j} &\text{ if }j\geq  T(a);
\end{cases}
\end{equation*}
cf.~\eqref{reflected}.
So $\hat{S}_j(a)$ is the walk reflected immediately as it ``crosses'' $a$ for the first time, and $\tilde{S}_j$ the walk reflected from the first step after crossing $a$ (see Figure~\ref{three}).

\begin{figure}[h]
\centering
\includegraphics[scale=0.6]{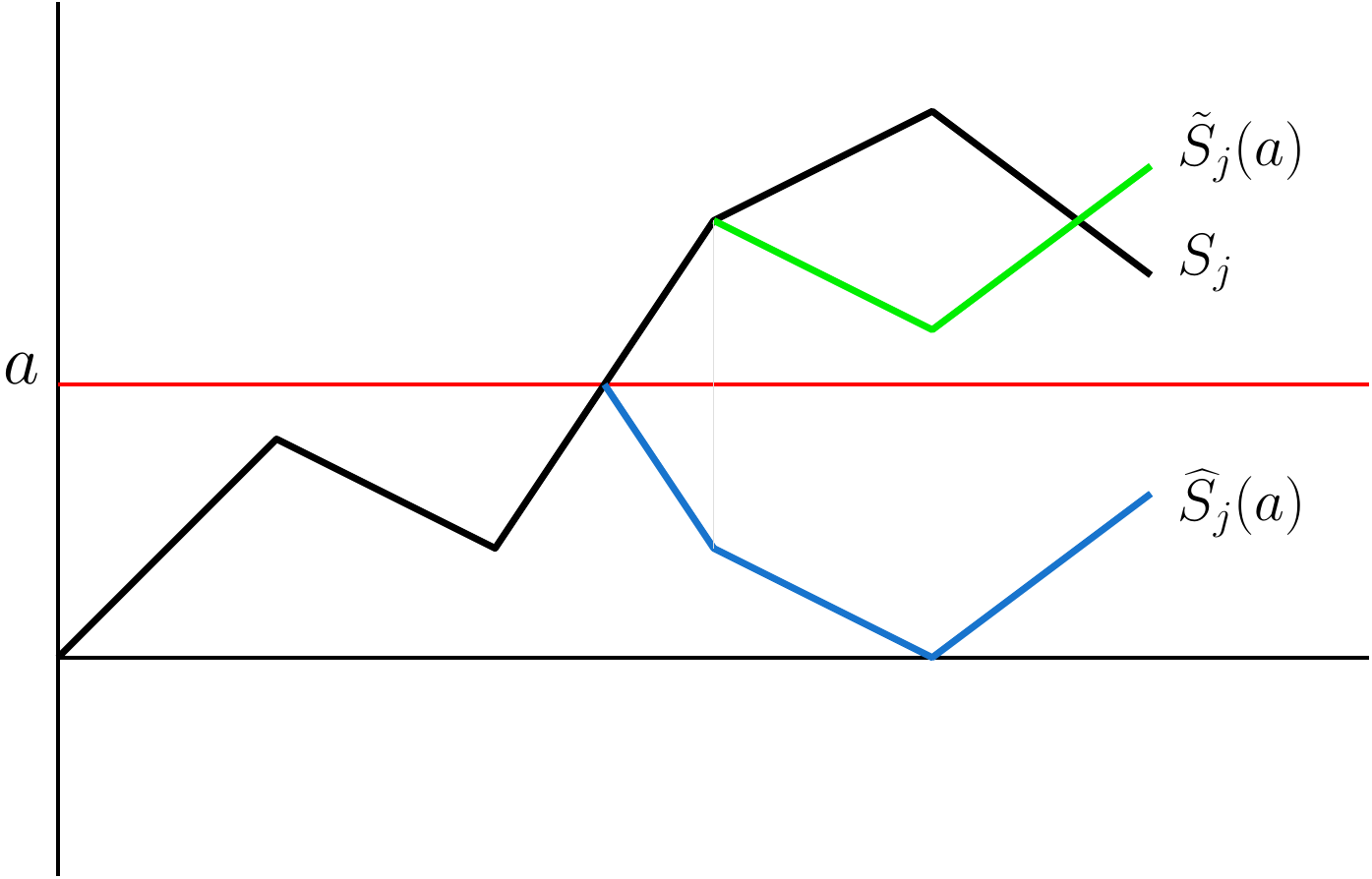}
\caption{The processes $S_{j}$, $\hat{S}_{j}(a)$ and $\tilde{S}_{j}(a)$. The upper horizontal line indicates level $a$.
}\label{three}
\end{figure}

The event that $W_{n,k+1}$ is at least $a$ occurs precisely when $T(a)$ is finite, which implies that either $S_{k} \geq a$ or $\hat{S}_k(a) = 2a-S_{k} > a$. 
As these are disjoint events, we have
\[ 
    \p(W_{n,k+1}\geq  a)=\p(S_{k}\geq  a)+\p(\hat{S}_{k}(a)>a);
\]
this is along similar lines to the proof of Lemma~\ref{reflectedUB}.
Note that the processes $\hat{S}_{j}(a)$ and $\tilde{S}_{j}(a)$ have the same increments, except for step $T(a)$. 
In this step the increments differ by $2(S_{T(a)}-a)$, twice the amount by which the random walk ``overshoots''  level $a$. 
As this overshoot is nonnegative and bounded by $\max_{1\leq  i\leq  k}\stpn{\tau(i)}$, we find that
\begin{equation}\label{hatS}
    \tilde{S}_{k}(a)\geq  \hat{S}_{k}(a)\geq  \tilde{S}_{k}(a)-2\max_{1\leq  i\leq  k}\stpn{\tau(i)}.
\end{equation}
This leads to  the estimates
\begin{align}\label{eq:walkbounds}
\p\left(S_{k}\geq  a\right)+\p\left(\tilde{S}_{k}(a)>a\right)&\geq  \p\left(W_{n,k+1}\geq  a\right) \nonumber\\
                                       &\geq  \p\left(S_{k}> a\right)+ \p\left(\tilde{S}_{k}(a)>a+2\max_{1\leq  i\leq  k}\stpn{\tau(i)}\right).
\end{align}

We now come to the first difficulty: we would like to estimate the probabilities $\p(S_k > a)$ and $\p(\tilde{S}_k(a) > a)$ by applying a CLT result.
However, the reflected process $(\tilde{S}_j)$ does not have independent steps (unless the service time distributions are symmetric around their mean, which we do not assume).
Fortunately, $(\tilde{S}_j)$ \emph{is} 
a martingale, and so we can apply the following CLT-type result for martingales.
Here and in the remainder of this section, $Z$ denotes a standard normal random variable.

\begin{theorem}[Heyde and Brown~\cite{Heyde}]\label{CLT}
Let $(\xi_i,\mathcal{F}_i)$ be a sequence of martingale differences, and let $Y_j=\xi_1+\dots+\xi_j$ be the corresponding martingale. Suppose that the conditional variance, given by
\[ 
    \sum_{i=1}^k \E[\xi_i^2\,|\,\mathcal{F}_{i-1}],
\]
is equal to one for some $k$. Then for any $\delta>0$ there exists a constant $C_\delta$ that depends on $\delta$ only, such that 
\[
    \sup_{x\in\R}|\p(Y_k> x)-\p(Z> x)|\leq  C_\delta\left(\sum_{i=1}^k \E|\xi_i|^{2+\delta}\right)^{1/(3+\delta)}.
\]
\end{theorem}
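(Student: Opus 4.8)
The statement is a Berry--Esseen (rate-of-convergence) bound in the martingale central limit theorem, and I would prove it along the classical Fourier-analytic lines of Heyde and Brown. Write $L_k := \sum_{i=1}^k \E|\xi_i|^{2+\delta}$ for the quantity on the right-hand side, and let $f_k(t) := \E[\exp(itY_k)]$ be the characteristic function of $Y_k$. The plan is to first reduce everything to a pointwise estimate of $f_k$ via the Esseen smoothing inequality, which gives, for every $T>0$,
\[
\sup_{x\in\R}\bigl|\p(Y_k\le x)-\p(Z\le x)\bigr| \le \frac{1}{\pi}\int_{-T}^{T}\left|\frac{f_k(t)-e^{-t^2/2}}{t}\right|\,\mathrm{d}t + \frac{C}{T},
\]
where $C$ absorbs the supremum of the standard normal density. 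Thus the whole problem becomes a bound on $|f_k(t)-e^{-t^2/2}|$ in terms of $t$ and $L_k$, followed by an optimal choice of the truncation level $T$.

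To control the characteristic function I would exploit the martingale structure through the auxiliary exponential process
\[
U_j := \exp\Bigl(itY_j + \tfrac12 t^2 V_j^2\Bigr), \qquad V_j^2 := \sum_{i=1}^j \E[\xi_i^2\mid\mathcal{F}_{i-1}],
\]
with $U_0=1$; the hypothesis that the conditional variance equals one gives $V_k^2=1$, so that $f_k(t)=e^{-t^2/2}\,\E[U_k]$. Writing $\sigma_j^2:=\E[\xi_j^2\mid\mathcal{F}_{j-1}]$ and $\psi_j(t):=\E[e^{it\xi_j}\mid\mathcal{F}_{j-1}]$, one has $\E[U_j\mid\mathcal{F}_{j-1}]=U_{j-1}\exp(\tfrac12 t^2\sigma_j^2)\psi_j(t)$, and telescoping yields
\[
\E[U_k]-1 = \sum_{j=1}^{k}\E\bigl[U_{j-1}\bigl(e^{\frac12 t^2\sigma_j^2}\psi_j(t)-1\bigr)\bigr].
\]
Since $V_{j-1}^2\le V_k^2=1$ we have $|U_{j-1}|\le e^{t^2/2}$ uniformly, so it suffices to bound each factor $e^{\frac12 t^2\sigma_j^2}\psi_j(t)-1$. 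The martingale-difference property $\E[\xi_j\mid\mathcal{F}_{j-1}]=0$ kills the linear term in the conditional Taylor expansion $\psi_j(t)=1-\tfrac12 t^2\sigma_j^2+\E[r(\xi_j)\mid\mathcal{F}_{j-1}]$ with $|r(x)|\le C|t|^{2+\delta}|x|^{2+\delta}$ (valid for $0<\delta\le1$, and analogously in general), and multiplying by $e^{\frac12 t^2\sigma_j^2}=1+\tfrac12 t^2\sigma_j^2+O(t^4\sigma_j^4)$ makes the two leading quadratic terms cancel. Summing over $j$ and using $\sum_j\sigma_j^2=1$ to control the residual $\sigma_j^4$ contributions by the $(2+\delta)$-th moment sum produces a bound of the shape $|f_k(t)-e^{-t^2/2}|\le C(1+|t|^{2+\delta})L_k$ on the relevant range of $t$.

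With this estimate the integrand in the Esseen inequality behaves like $|t|^{1+\delta}L_k$ near the origin (the factor $1/|t|$ removing one power of $t$), so the integral is $\lesssim L_k\,T^{2+\delta}$ while the smoothing error is $C/T$. Minimizing $L_k T^{2+\delta}+T^{-1}$ over $T$ gives the optimal truncation $T\asymp L_k^{-1/(3+\delta)}$, at which both terms are of order $L_k^{1/(3+\delta)}$; this is precisely the exponent claimed. For $|t|$ beyond the range where the expansion is accurate one falls back on the trivial bounds $|f_k(t)|\le1$ and $e^{-t^2/2}\le1$ together with the Gaussian decay. The main obstacle is making this remainder analysis uniform and quantitatively matched to the exponent $1/(3+\delta)$: one must simultaneously control the $\sigma_j^4$ terms and the Taylor remainders using only $\sum_j\sigma_j^2=1$ and $\sum_j\E|\xi_j|^{2+\delta}=L_k$ (via conditional Jensen inequalities relating $\sigma_j^2$ to $\E[|\xi_j|^{2+\delta}\mid\mathcal{F}_{j-1}]^{2/(2+\delta)}$), and, crucially, to verify that the characteristic-function estimate survives all the way out to the large truncation level $T\asymp L_k^{-1/(3+\delta)}$ — which is where the individual conditional variances being small (forced by the normalization $\sum_j\sigma_j^2=1$) is essential. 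An alternative avoiding Fourier inversion is Stein's method adapted to martingales, but the classical characteristic-function argument above, which is that of Heyde and Brown, delivers the exponent $1/(3+\delta)$ most transparently.
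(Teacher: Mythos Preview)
The paper does not prove this theorem at all: it is quoted verbatim as a result of Heyde and Brown~\cite{Heyde} and used as a black box in the proofs of Propositions~\ref{UB2} and~\ref{LB2}. There is therefore no ``paper's own proof'' to compare against.

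Your sketch is a reasonable outline of the original Heyde--Brown argument (Esseen smoothing plus a characteristic-function bound via the exponential martingale $U_j$ and telescoping), and the optimization $T\asymp L_k^{-1/(3+\delta)}$ correctly recovers the exponent. A couple of points deserve more care if you actually carry this out. First, after bounding $|U_{j-1}|\le e^{t^2/2}$ you must remember that $f_k(t)-e^{-t^2/2}=e^{-t^2/2}(\E[U_k]-1)$, so the exponential factors cancel and the resulting bound on $|f_k(t)-e^{-t^2/2}|$ does not blow up in $t$; you state the conclusion correctly but the cancellation should be made explicit. Second, the step ``using $\sum_j\sigma_j^2=1$ to control the residual $\sigma_j^4$ contributions by the $(2+\delta)$-th moment sum'' is the genuinely delicate part: $\sum_j\sigma_j^4$ is not in general dominated by $L_k$ without an additional argument (e.g.\ a truncation of the $\xi_j$ or the conditional Lyapunov inequality you allude to), and in Heyde--Brown this is handled with some care. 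Finally, the characteristic-function estimate you write is only useful on a range $|t|\lesssim L_k^{-1/(3+\delta)}$; verifying that it holds out to this scale, rather than just for bounded $t$, is exactly the crux, and your remark that ``individual conditional variances being small'' is what saves this is not quite right---nothing in the hypotheses forces the individual $\sigma_j^2$ to be small, only their sum. The original proof handles the large-$t$ regime differently.
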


This is exactly what we need to obtain the following proposition (the full derivation can be found in Appendix~\ref{App5}).
\begin{proposition}\label{UB2}
For any $k$, $n \geq k$ and permutation $\tau$, we have
\begin{align*}
    \frac{\E W_{n,k+1}}{\sqrt{\sum_{j=1}^k \sigma^2_{n,\tau(j)}}} \leq  \E|Z| +2(C_\delta+1 )\sqrt{q_k}^{1/(3+\delta)},
\end{align*}
and $C_\delta$ is as in Theorem~\ref{CLT}. 
\end{proposition}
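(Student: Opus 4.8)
The plan is to combine the two-sided walk estimates in \eqref{eq:walkbounds} with the martingale CLT (Theorem~\ref{CLT}), applied separately to the walk $(S_j)$ and to the reflected walk $(\tilde S_j(a))$, and then integrate over $a$. Write $\Sigma := \sqrt{\sum_{j=1}^k \sigma^2_{n,\tau(j)}}$ and note that $\E W_{n,k+1} = \int_0^\infty \p(W_{n,k+1}\geq a)\,\dx a$. From the left inequality in \eqref{eq:walkbounds}, $\p(W_{n,k+1}\geq a)\leq \p(S_k\geq a)+\p(\tilde S_k(a)>a)$, so it suffices to bound $\int_0^\infty \p(S_k>a)\,\dx a$ and $\int_0^\infty \p(\tilde S_k(a)>a)\,\dx a$.

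First I would handle the unreflected walk. Dividing the steps $X_{n,\tau(\cdot)}$ by $\Sigma$ gives a martingale (here a genuine random walk with independent zero-mean steps) whose conditional variance sums to one, and whose normalized $(2+\delta)$-th absolute moments sum to exactly $q_k$ by the definition of $q_k$ in Assumption~\ref{assumption} (the supremum there dominates this particular $(k,n,\tau)$). Theorem~\ref{CLT} then yields $\sup_x |\p(S_k/\Sigma > x) - \p(Z>x)| \leq C_\delta\, q_k^{1/(3+\delta)}$, and integrating, $\int_0^\infty \p(S_k>a)\,\dx a = \Sigma\int_0^\infty \p(S_k/\Sigma>a)\,\dx a \leq \Sigma\bigl(\E Z^+ + C_\delta\, q_k^{1/(3+\delta)}\bigr)$ provided one is a touch careful: the uniform error bound must be integrated against a finite-length interval, or one uses that both $S_k/\Sigma$ and $Z$ have the same (unit) second moment so the tails beyond a large cutoff contribute negligibly and the uniform bound controls the bounded part. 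The reflected walk $(\tilde S_j(a))$ is treated the same way: it is a martingale (this is the key observation flagged in the text just before Theorem~\ref{CLT}), its step $(2+\delta)$-th moments agree with those of $(S_j)$ except at the single reflection step $T(a)$, where the step is at most $2\max_i X_{n,\tau(i)}$ in absolute value — contributing at most a bounded multiple of $\sum_i \E|X_{n,\tau(i)}|^{2+\delta}$ extra, hence the normalized sum is still $O(q_k)$. So $\int_0^\infty \p(\tilde S_k(a)>a)\,\dx a \leq \Sigma\bigl(\E Z^+ + O(q_k^{1/(3+\delta)})\bigr)$ as well. Adding the two and using $2\E Z^+ = \E|Z|$ gives the claimed bound, with the constant $2(C_\delta+1)$ absorbing the extra factors; the $\sqrt{q_k}$ (rather than $q_k$) appears presumably because one splits the integration range at a cutoff of order $q_k^{-\text{something}}$ and optimizes, trading the CLT error on the bounded part against the Chebyshev tail bound on the unbounded part.

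The main obstacle is the bookkeeping around the reflected process: one must verify carefully that $(\tilde S_j(a))$ is a martingale with respect to the natural filtration (the reflection is defined via the stopping time $T(a)$, so this is a genuine optional-stopping-flavored check), and that its conditional variance, when normalized by $\Sigma^2$, is still close enough to $1$ — it is not exactly $1$, because reflecting at $T(a)$ does not change increments in distribution unless the steps are symmetric, so the conditional variance of the reflected step differs from that of the original. One has to argue this discrepancy is controlled by the overshoot, which is bounded by $\max_i X_{n,\tau(i)}$, and that $\max_i X_{n,\tau(i)}/\Sigma \to 0$ in the relevant sense — which again follows from Assumption~\ref{assumption} since a uniformly small $(2+\delta)$-th moment sum forces the maximum step to be small relative to $\Sigma$. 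Rescaling to make the conditional variance exactly one (as Theorem~\ref{CLT} demands) then introduces a further small perturbation that must be tracked. These are the details deferred to Appendix~\ref{App5}; at the level of this sketch, everything reduces to "$q_k\to 0$ kills every error term," which is exactly the content of Assumption~\ref{assumption}.
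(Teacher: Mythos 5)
Your proposal follows the paper's proof almost exactly: the left-hand inequality of \eqref{eq:walkbounds}, Theorem~\ref{CLT} applied to both $S_k/\Sigma_k$ and $\tilde{S}_k(a)/\Sigma_k$ (with $\Sigma_k^2=\sum_{j\leq k}\sigma^2_{n,\tau(j)}$), and an integration of the tail probabilities split at a cutoff of order $q_k^{-1/(2(3+\delta))}$, with Chebyshev handling the unbounded part --- which is precisely where the exponent $\sqrt{q_k}^{1/(3+\delta)}$ and the constant $C_\delta+1$ come from. The one point worth correcting is that the ``main obstacle'' you describe is not actually there. The increments of $\tilde{S}_j(a)$ are exactly those of $S_j$ up to a sign that is determined by whether $j>T(a)$, hence $\mathcal{F}_{j-1}$-measurable; consequently the squared increments and the $(2+\delta)$-th absolute moments are \emph{literally identical} to those of $(S_j)$ (the square and the absolute value kill the sign flip --- no symmetry of the steps is needed), the conditional variance is deterministically equal to $\Sigma_k^2$, and no rescaling correction or moment surcharge at the reflection step is required; this is also why the exact constant $2(C_\delta+1)$ survives. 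The overshoot term $2\max_i X_{n,\tau(i)}$ that you invoke is the discrepancy between $\hat{S}_k(a)$ and $\tilde{S}_k(a)$ in \eqref{hatS}; it enters only the lower-bound side of \eqref{eq:walkbounds} and is therefore irrelevant to this proposition (it is handled by truncation in Proposition~\ref{LB2}).
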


Next, we come to the lower bound.
Considering \eqref{eq:walkbounds}, the second difficulty becomes apparent: 
we do not make any boundedness assumptions on the service time distributions, and so $\max_{1 \leq i \leq k} X_{n,\tau(i)}$ could be very large.
This is remedied with a truncation argument.

We will consider the random walk $(S'_j)_{j \leq k}$ with steps $\stpn{\tau(i)}\mathbbm{1}_{\stpn{\tau(i)}\leq  c_{n,k}}$ instead of $\stpn{\tau(i)}$ (where $\mathbbm{1}_E$ denotes the indicator of an event $E$).
Here, $c_{n,k}$ is a bound depending on $n$ and $k$, but not $i$.
Let $W_{n,k+1}'$ be the maximum of the new random walk; 
clearly $\E W_{n,k+1}\geq  \E W_{n,k+1}'$.
Further, defining $\hat{S}'_j(a)$ and $\tilde{S}'_j(a)$ analogously to $\hat{S}_j(a)$ and $\tilde{S}_j(a)$ but with respect to the walk $(S'_j)$, \eqref{hatS} now yields
\[
    \hat{S}_{k}(a)\geq  \tilde{S}_{k}(a)-2c_{n,k}.
\]
This allows us to apply Theorem~\ref{CLT} to obtain a lower bound on $\E W'_{n,k+1}$ and hence $\E W_{n,k+1}$.
(A technical complication is that $S'_j$ and $\tilde{S}'_j(a)$ are no longer quite martingales; however, this can be overcome.)
The value $c_{n,k}$ must be chosen carefully 
in a way that balances the need to sufficiently bound the overshoot (thus making \eqref{eq:walkbounds} effective) and the need to not affect the steps too much (since this would decrease the maximum significantly). 
The result is the following proposition; further details can be found in Appendix~\ref{App5}.

\begin{proposition}\label{LB2}
Under  Assumption~\ref{assumption}, for each $\varepsilon>0$ there exists a $K$ depending  on $\varepsilon$ only, such that for all $k\geq  K$, $n\geq  k$ and permutations $\tau$,
\[
    \frac{\E W_{n,k+1}}{\sqrt{\sum_{i=1}^k \sigma^2_{n,\tau(i)}}}\geq  (1-\varepsilon)\E|Z|.
\]
\end{proposition}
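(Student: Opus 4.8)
The plan is to apply Theorem~\ref{CLT} to the martingale $(\tilde S_j(a))$ (or, strictly speaking, to a truncated variant) in order to lower-bound $\p(W_{n,k+1} \ge a)$ via the right-hand side of~\eqref{eq:walkbounds}, and then integrate over $a$ to recover a bound on $\E W_{n,k+1}$. Write $\Sigma_k := \sqrt{\sum_{i=1}^k \sdev{\tau(i)}{n}^2}$ for the standard deviation of $S_k$. The target inequality is $\E W_{n,k+1} \ge (1-\ve)\Sigma_k\,\E|Z|$, and the natural approach is to show that the normalized walk $S_k/\Sigma_k$ and the normalized reflected walk $\tilde S_k(a)/\Sigma_k$ are each close in distribution to $Z$ uniformly in $a$, with an error controlled by $q_k^{1/(3+\delta)}$ (as in Proposition~\ref{UB2}), \emph{plus} an additional error coming from the overshoot term $2\max_{1\le i \le k}\stpn{\tau(i)}$ appearing in~\eqref{eq:walkbounds}.

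First I would deal with the overshoot via the truncation already flagged in the excerpt: replace $\stpn{\tau(i)}$ by $\stpn{\tau(i)}\mathbbm{1}_{\stpn{\tau(i)} \le c_{n,k}}$, obtaining a walk $(S'_j)$ with maximum $W'_{n,k+1} \le W_{n,k+1}$, and choose $c_{n,k}$ so that (i) $c_{n,k}/\Sigma_k \to 0$, so the extra slack $2c_{n,k}$ in the shifted event is negligible after normalization, and (ii) the truncation changes each step — and hence $\E S'_k$, $\var S'_k$, and the $(2+\delta)$-th absolute moments — by a controllably small amount, so that the martingale CLT still applies with a small error. Concretely, since $\E|\stpn{\tau(i)}|^{2+\delta}$ summed is $q_k \Sigma_k^{2+\delta}$, a Markov-type estimate shows that $\sum_i \E\big[|\stpn{\tau(i)}|\mathbbm{1}_{\stpn{\tau(i)} > c_{n,k}}\big] \le c_{n,k}^{-(1+\delta)}\, q_k\,\Sigma_k^{2+\delta}$, so a choice like $c_{n,k} = \Sigma_k\, q_k^{\eta}$ for a suitable small $\eta>0$ makes both the drift/variance perturbation and the ratio $c_{n,k}/\Sigma_k$ vanish as $k\to\infty$ (uniformly in $n\ge k$ and $\tau$, by the supremum in Assumption~\ref{assumption}). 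One must also check that $(\tilde S'_j(a))$ fails to be a martingale only by a small, controllable drift (the truncated steps are no longer mean zero), so one subtracts off the cumulative conditional means to get a genuine martingale to which Theorem~\ref{CLT} applies, absorbing that correction into the error term.

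Next, with the truncated walk in hand, I would use~\eqref{eq:walkbounds} in the form $\p(W_{n,k+1}\ge a) \ge \p(S'_k > a) + \p(\tilde S'_k(a) > a + 2c_{n,k})$ and normalize by $\Sigma'_k$ (the standard deviation of $S'_k$, which is $\Sigma_k(1+o(1))$). Applying Theorem~\ref{CLT} to the (drift-corrected) martingales $S'_k/\Sigma'_k$ and $\tilde S'_k(a)/\Sigma'_k$ gives, uniformly in $a$,
\begin{align*}
\p(W_{n,k+1} \ge a) \ge \p\!\big(Z > a/\Sigma'_k\big) + \p\!\big(Z > a/\Sigma'_k + 2c_{n,k}/\Sigma'_k\big) - \gamma_k,
\end{align*}
where $\gamma_k \to 0$ collects the CLT error $C_\delta(\sum_i \E|\xi_i|^{2+\delta})^{1/(3+\delta)}$ and the truncation/drift corrections, and $\gamma_k$ is controlled by $q_k^{1/(3+\delta)}$ and $q_k^{\eta}$ up to constants depending only on $\delta$. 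Now integrate over $a \in [0,\infty)$: $\E W_{n,k+1} = \int_0^\infty \p(W_{n,k+1} \ge a)\,\dx{}a \ge \int_0^\infty\!\big[\p(Z > a/\Sigma'_k) + \p(Z > a/\Sigma'_k + 2c_{n,k}/\Sigma'_k)\big]\dx{}a - \int_0^{M\Sigma_k}\gamma_k\,\dx{}a$ for a suitable cutoff $M$; the tail of $\p(W_{n,k+1}\ge a)$ beyond $M\Sigma_k$ is handled by a crude second-moment (Kolmogorov/Doob maximal) bound, contributing $o(\Sigma_k)$. The first integral equals $\Sigma'_k \int_0^\infty \p(Z > u)\,\dx{}u + \Sigma'_k \int_0^\infty \p(Z > u + 2c_{n,k}/\Sigma'_k)\,\dx{}u = \Sigma'_k\big(\tfrac12\E|Z| + \tfrac12\E|Z| - o(1)\big) = \Sigma'_k\,\E|Z|(1-o(1))$, using $\int_0^\infty \p(Z>u)\,\dx{}u = \tfrac12\E|Z|$ and continuity of this integral in the shift $2c_{n,k}/\Sigma'_k \to 0$. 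Putting the pieces together, $\E W_{n,k+1}/\Sigma_k \ge \E|Z|(1-o(1)) - O(M\gamma_k) - o(1)$, and choosing first $M$ large then $K$ large so that for $k\ge K$ all the $o(1)$ and error terms are below $\ve\,\E|Z|$ (uniformly in $n\ge k$ and $\tau$, thanks to the supremum in Assumption~\ref{assumption}) yields the claimed bound.

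The main obstacle I anticipate is the truncation bookkeeping: making the single choice of $c_{n,k}$ simultaneously (a) negligible relative to $\Sigma_k$, (b) large enough that the discarded mass $\sum_i \E[|\stpn{\tau(i)}|\mathbbm{1}_{\cdot > c_{n,k}}]$ and the induced drift of the reflected walk are small relative to $\Sigma_k$, and (c) compatible with the hypothesis of Theorem~\ref{CLT} (conditional variance exactly one, which forces a careful renormalization), all \emph{uniformly} over $n \ge k$ and $\tau \in \sym_n$. The reflected process $(\tilde S'_j(a))$ losing the martingale property after truncation is the subtle point — one must re-center it and verify that the re-centering term is itself $o(\Sigma_k)$ with the chosen $c_{n,k}$, and that the re-centered object still satisfies the Lyapunov-type moment bound needed for Theorem~\ref{CLT}. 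Everything else (integrating tail probabilities, the reflection-principle identity, the Gaussian integral $\int_0^\infty \p(Z>u)\,\dx{}u = \tfrac12\E|Z|$) is routine once the truncation parameters are pinned down.
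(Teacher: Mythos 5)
Your proposal is correct and follows essentially the same route as the paper's proof of Proposition~\ref{LB2}: truncation at a level $c_{n,k}=\Sigma_{n,k}q_k^{\eta}$ (the paper takes $\eta=1/(2\delta+2)$), the Markov-type moment bound on the discarded mass and the induced drift/variance perturbation (Lemma~\ref{enumLemma}), re-centering the reflected truncated walk so that Theorem~\ref{CLT} applies, and integrating the resulting tail bounds over $a$. The only (immaterial) difference is in the final integration, where you use a fixed cutoff $M$ sent to infinity after $K$, while the paper integrates up to the $k$-dependent level $q_k^{-1/(2(3+\delta))}$ so that the accumulated CLT error vanishes automatically.
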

Combining Proposition~\ref{UB2} and Proposition~\ref{LB2} yields Theorem~\ref{!} in a straightforward manner.
For completeness, the details can be found in Appendix~\ref{App5}.

\begin{remark}
As in Remark \ref{REM1}, when the scheduled session end time is equal to the expected total service time, the expected overtime $\E W_{n,n+1}$ can be handled similarly to waiting time, and the result of Theorem~\ref{!} is also valid when some extra term $c\,\E W_{n,n+1}$ (with $c>0$) is added to the cost function.\end{remark}

\begin{remark} In this remark we assess the rate at which $\varrho_{\omega}({\bm B}_n)$ converges to 1.
Suppose that $\inf_{n,i} \{\sigma_{n,i}^2\}>0$ and $\sup_{n,i}\{\E|X_{n,i}|^{2+\delta}\}<\infty$. Then $q_k=O(k^{-\delta/2})$ indeed converges to zero.
Following the steps of the proof of Theorem~\ref{!} we then find
\begin{align*}
    \varrho_{\omega}({\bm B}_n)= 1+O\left(\frac{K}{n}\right)+O\left(\sqrt{q_K}^{1/(3+\delta)}\right)=1+O\left(\frac{K}{n}\right)+O\left(K^{-\delta/(12+4\delta)}\right).
\end{align*}
To obtain some insight into the convergence rate, observe that by choosing $K$ in a way that these terms are balanced, it follows that
\begin{align*}
    \varrho_{\omega}({\bm B}_n)=1+O\left(n^{-\delta/(12+5\delta)}\right).
\end{align*}
Note that for many practical distributions, including lognormal distributions, all moments exist. In such a case, Theorem~\ref{!} can be applied with any choice of $\delta$.
\end{remark}


\section{Bounds on performance for optimally-spaced schedules}\label{sec:combined}

The previous sections focused on mean-based schedules, i.e.\ schedules where the interarrival times are equal to the mean service times. Rather than mean-based schedules, one would preferably use optimally-spaced schedules. The problem of finding optimally-spaced schedules is well understood (see e.g.~\cite{Begen, Kuiper}). In this section we consider the performance of the \svf{} sequence compared to the optimal combination of sequence and schedule.
For this case we have also performed extensive numerical experiments to gain insight into what performance can be expected for \svf{} for practical distributions, described in more detail in Appendix~\ref{OptNumerics}. As in the  mean-based case, we focused on exponential and lognormal service times. 
Our main finding was that for all experiments we performed, the \svf{} sequence was the optimal sequence for all experiments with these distributions. 
It is noted, though, that such experiments can only be done for relatively small numbers of $n$, due to the complexity involved in computing the optimal sequence and schedule. 

Kong et al.~\cite{Kong} give an example demonstrating that \svf{} is not always optimal for optimally-spaced schedules, but their example does not satisfy the dilation ordering assumption (Assumption~\ref{AStochDom}). 
    The following example shows that even when Assumption~\ref{AStochDom} applies, \svf{} may not be optimal.

\begin{example}\label{OptimalCounterExample}
Suppose we have $n=7$ patients, and we set $\omega=\frac12$. The first three patients have service times that take value 0 or 2, each with probability $\frac12$. The other four patients have service times that take values 0 or 4 each with probability $\frac14$ and that take value 2 with probability $\frac12$. As $B_i-2$, for $i=4,5,6,7$, has the same distribution as $B_1-1+A$, where $A$ takes values $-1$ or 1 each with probability $\frac12$ independent of $B_1$, we can see through Lemma~\ref{condExp} that $B_1\leq_{\text{dil}} B_i$. So this example satisfies the dilation ordering assumption.

We computed the optimal interarrival times and corresponding cost for each possible sequences (per the method described in Appendix~\ref{OptNumerics}). The ratio between the optimal cost for the \svf{} sequence and the optimal cost overall was found to be $1.0787$, so the \svf{} sequence is not optimal.
\end{example}

The goal of the remainder of this section is to prove upper bounds on the approximation ratio $r_{\omega}$. In Section~\ref{main4}, we will do so when the service-time distributions are from the same location-scale family, leading to Theorem~\ref{approx}. 
In Section~\ref{examples}, we discuss the implications of this theorem for specific location-scale families of interest.
Then in Section~\ref{lognormal2}, we move beyond location-scale families and consider the lognormally distributed service times, which are of particular interest in practice \cite{Cayirli,Klassen}.
Finally, in Section \ref{AsymptoticExample} we show that for optimally-spaced schedules \svf{} is \emph{not} asymptotically optimal, in stark contrast to the situation for mean-based schedules.


\subsection{Location-scale family of service times}\label{main4}

We impose the following assumption.

\begin{assumption}\label{family}
    The distributions $(B_i)_{1 \leq i \leq n}$ form a location-scale family.
    In other words, there exists a random variable $B$ having mean zero and variance one such that $B_i\dis \mu_i+\sigma_iB$.
\end{assumption}

Note that, by \cite{Shaked}, Theorem 3.A.18, this assumption implies Assumption \ref{AStochDom}. Due to Example~\ref{noconstbound}, no bound on the approximation ratio $r_\omega$ can be found without imposing \emph{some} assumption on the service-time distributions.

As well as using \svf{} as a sequencing rule, we must now specify an (ideally simple) scheduling rule as well.
We use a schedule of the form $\bm{x}=\bm{\mu}+\alpha\bm{\sigma}$ for some $\alpha>0$, in line with a suggestion by \cite{Charnetski}.
So we include additional slack in the schedule after each patient proportional to its standard deviation, in order to prevent the propagation of delays.
We will choose 
\begin{align}\label{alphaatje}
\alpha=\sqrt{\frac{1-\omega}{2\omega}+\frac{\sigma_{n-1}}{2\sum_{i=1}^{n-1}\sigma_i}};
\end{align}
the rationale behind this choice will become clear from the proof, where it will turn out to optimize the bound we obtain.
Let $Q_B$ denote the quantile function of $B$, i.e.\ $Q_B(y)=\inf\{x: y\leq \p(B\leq x)\}$. Define $B(\omega)= B-Q_B(1-\omega)$, and 
\begin{align*}
K(B,\omega)= \sqrt{2\omega}/\left[\omega \E B(\omega)^-+(1-\omega)\E B(\omega)^+\right].
\end{align*}
The main result of this section is the following.

\begin{theorem}\label{approx}
    Suppose that, for the \svf{} sequence, we use the schedule $\bm{x}=\bm{\mu}+\alpha\bm{\sigma}$, with $\alpha$ given by~$(\ref{alphaatje})$. Under Assumption $\ref{family}$, we have $r_{\omega} \leq K(B,\omega)$.
\end{theorem}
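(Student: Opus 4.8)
The plan is to bound the numerator and denominator of $r_\omega$ separately, exploiting the location-scale structure heavily, since Assumption~\ref{family} lets us reduce every waiting time and idle time to a one-dimensional quantity driven by the single ``shape'' random variable $B$.

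First I would set up the numerator. With the \svf{} sequence $\Id$ and the Charnetski schedule $x_i = \mu_i + \alpha\sigma_i$, the Lindley recursion \eqref{LindleyModel} has increments $B_i - x_i = \sigma_i(B^{(i)} - \alpha)$ where $B^{(i)}$ are i.i.d.\ copies of $B$. I would like to argue that the waiting times stay controlled: the key point is that each increment has mean $-\alpha\sigma_i < 0$, and because variances are increasing along the \svf{} order one should be able to show the waiting times (and hence idle times) do not blow up. Concretely, I expect to bound $\E W_{k+1} \le $ something like $\sum_{j\le k}$ of a term controlled by $\E(B - \alpha)^+$ times $\sigma_j$, and similarly the last-patient overtime / idle contribution; then assemble $C_\omega(\Id, \bm\mu + \alpha\bm\sigma)$ as a weighted sum. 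The role of the specific choice \eqref{alphaatje} is precisely to make this upper bound as small as possible; I would keep $\alpha$ symbolic, get a bound of the form (coefficient in $\alpha$)$\cdot(\text{waiting part}) + (\text{coefficient involving }1/\alpha$ or linear in $\alpha)\cdot(\text{idle part})$, and only at the end optimize over $\alpha$, recovering \eqref{alphaatje} and the constant $\sqrt{2\omega}$ in the numerator of $K(B,\omega)$.

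Next I would produce a lower bound on the optimal cost $\min\{C_\omega(\vec B,\tau,\bm y,\omega):\tau\in\sym_n, \bm y\in\R_+^n\}$ valid for \emph{any} sequence and \emph{any} schedule. The natural device is the same one used elsewhere in the paper: focus on a single slot. For any $\tau$ and any $\bm y$, the waiting-plus-idle cost contributed by the transition into slot $i+1$ is at least $\omega\E(B_{\tau(i)} - y_{\tau(i)})^- + (1-\omega)\E(B_{\tau(i)} - y_{\tau(i)})^+$ (dropping the nonnegative $W_i$ only helps a lower bound on idle, and $W_{i+1}\ge (B_{\tau(i)}-y_{\tau(i)} )^+$ gives the waiting part — I would need to be a little careful about which inequality is the safe direction, but some single-slot relaxation of this flavor works). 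Minimizing over the scalar $y_{\tau(i)}$, by Lemma~\ref{quantile} the optimum is $y = \mu_{\tau(i)} + \sigma_{\tau(i)} Q_B(1-\omega)$, i.e.\ centered so the relevant quantile matches $\omega$, and the resulting per-slot cost is exactly $\sigma_{\tau(i)}\bigl[\omega\E B(\omega)^- + (1-\omega)\E B(\omega)^+\bigr]$ with $B(\omega) = B - Q_B(1-\omega)$ as defined. Summing over the $n-1$ transitions gives optimal cost $\ge \bigl(\sum_{i=1}^{n-1}\sigma_i\bigr)\cdot\bigl[\omega\E B(\omega)^- + (1-\omega)\E B(\omega)^+\bigr]$ — the exact denominator of $K(B,\omega)$ up to the $\sum\sigma_i$ factor, which must then match the $\sum\sigma_i$-dependence of the numerator bound. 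This matching of the $\sigma$-sums is exactly why the second term under the square root in \eqref{alphaatje} is $\sigma_{n-1}/(2\sum_{i=1}^{n-1}\sigma_i)$.

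The main obstacle I anticipate is the upper bound on the \svf{} cost, specifically controlling the \emph{propagation} of waiting times: even with a strictly negative drift $-\alpha\sigma_i$ per step, the variance heterogeneity means a naive random-walk-maximum bound could be loose, and one must show $\E W_{k+1}$ is genuinely $O\bigl(\sum_{j\le k}\sigma_j\cdot(\text{const in }B,\alpha)\bigr)$ with a constant clean enough that, after optimizing $\alpha$, everything collapses to $K(B,\omega)$. I would handle this by writing $W_{k+1} = \max\{0, S_1,\dots,S_k\}$ with $S_j = \sum_{i} \sigma_{\cdot}(B^{(\cdot)}-\alpha)$ and bounding $\E\max_j S_j \le \sum_j \E(\sigma_{(j)}(B^{(j)}-\alpha))^+ $ or a sharper telescoping estimate, then using monotonicity of the $\sigma_i$ to compare against $\alpha\sum\sigma_i$; the idle-time total is then read off via identity \eqref{I}. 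A secondary technical point is that $B$ may take negative values so $\bm x = \bm\mu + \alpha\bm\sigma$ and the optimal $\bm y$ should be checked to lie in $\R_+^n$ (or the remark about negative-valued $B_i$ invoked), but this is routine. Assembling the numerator bound $C_\omega(\Id,\bm\mu+\alpha\bm\sigma) \le \sqrt{2\omega}\,(\sum_{i=1}^{n-1}\sigma_i)$ and dividing by the denominator bound yields $r_\omega \le K(B,\omega)$.
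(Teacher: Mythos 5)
Your lower bound is exactly the paper's argument: a single-slot newsvendor relaxation via Lemma~\ref{quantile}, with the location-scale structure turning the per-slot cost into $\sigma_{\tau(k)}\bigl[\omega\E B(\omega)^- + (1-\omega)\E B(\omega)^+\bigr]$, summed over the $n-1$ transitions and combined with $\sum_{i=1}^{n-1}\sigma_{\tau(i)}\geq\sum_{i=1}^{n-1}\sigma_i$. Your worry about ``which inequality is the safe direction'' is resolved by \emph{not} dropping $W_k$: write $W_k+B_{\tau(k)}-x_{\tau(k)}=B_{\tau(k)}-(x_{\tau(k)}-W_k)$ and, conditionally on $W_k$ (independent of $B_{\tau(k)}$), apply the newsvendor lemma with $c=x_{\tau(k)}-W_k$, which lower-bounds the exact per-slot cost uniformly over all $c$. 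That half is fine.

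The genuine gap is in the upper bound on the \svf{} cost. The concrete estimate you propose, $\E W_{k+1}\leq\sum_{j\leq k}\E\bigl(\sigma_j(B^{(j)}-\alpha)\bigr)^+=\E(B-\alpha)^+\sum_{j\leq k}\sigma_j$, discards the negative drift and is off by a factor of order $n$: summing over $k$ gives a total waiting cost of order $\sum_{k}\sum_{j\leq k}\sigma_j$, which equals roughly $\tfrac n2\sum_i\sigma_i$ when the $\sigma_i$ are comparable, whereas the target is $\sqrt{2\omega}\sum_i\sigma_i$. You cannot rescue this by letting $\alpha$ grow with $n$ (to shrink $\E(B-\alpha)^+$), because the idle-time term $\alpha\omega\sum_i\sigma_i$ then blows up. What is needed is a per-patient bound that does not grow with $k$ at all, namely $\E \Wsvf_{k+1}\leq\sigma_k/(2\alpha)$. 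The paper obtains this in two steps that are absent from your sketch: (i) a monotone-rescaling lemma --- multiplying a suffix of the steps of a random walk by a constant $c\geq1$ can only increase its maximum --- which, applied repeatedly using $\sigma_1\leq\dots\leq\sigma_k$ and the fact that every step is a rescaling of the \emph{same} $B$, shows that $\Wsvf_{k+1}$ is stochastically dominated by the all-time maximum $M_k$ of an i.i.d.\ random walk with steps distributed as $\sigma_k(B-\alpha)$; and (ii) Kingman's bound $\E M_k\leq\Var\bigl(\sigma_k(B-\alpha)\bigr)/\bigl(2|\E\,\sigma_k(B-\alpha)|\bigr)=\sigma_k/(2\alpha)$, which is precisely where the negative drift enters quantitatively. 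This is also where the \svf{} ordering and Assumption~\ref{family} do real work in the upper bound. With $\E \Wsvf_{k+1}\leq\sigma_k/(2\alpha)$ in hand, the rest of your assembly --- idle time via \eqref{I}, optimization over $\alpha$ recovering \eqref{alphaatje}, and $\sigma_{n-1}\leq\sum_{i=1}^{n-1}\sigma_i$ to clean the constant to $\sqrt{2\omega}$ --- goes through as you describe.
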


This result follows immediately from the bounds on the cost function $\Obj{\tau}{\bm{x}}{\omega}$ given in the following two propositions, that are proved in Appendix~\ref{App4}.

\begin{proposition}\label{SVF}
Suppose $\alpha$ is given by~$(\ref{alphaatje})$.
Under Assumption $\ref{family}$,
\begin{align*}
\Obj{\Id}{\bm{\mu}+\alpha\bm{\sigma}}{\omega}\leq  \sqrt{2\omega}\sum_{i=1}^{n-1} \sigma_i.
\end{align*}
\end{proposition}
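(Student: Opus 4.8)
The plan is to bound the cost of the \svf{} sequence under the schedule $\bm{x}=\bm{\mu}+\alpha\bm{\sigma}$ by controlling the waiting times $W_i$ directly. Recall from \eqref{W} that $W_{k+1}$ is the maximum of a random walk whose steps are $B_{j}-x_{j}=\sigma_j B^{(j)}-\alpha\sigma_j$ (with $B^{(j)}\dis B$ i.i.d.\ by Assumption~\ref{family}), taken over the patients in the first $k$ slots, which under \svf{} are patients $1,\dots,k$. The key structural point is that each step has \emph{negative} mean $-\alpha\sigma_j<0$, so the random walk has negative drift and its running maximum is controlled. I would first bound $\E W_{k+1}$ by the simplest possible estimate: since $W_{k+1}=\max\{0,S_1,\dots,S_k\}$ with $S_j=\sum_{i=1}^{j}(\sigma_i B^{(i)}-\alpha\sigma_i)$, we have $\E W_{k+1}\leq \sum_{i=1}^{k}\E(\sigma_i B^{(i)}-\alpha\sigma_i)^+ = \sum_{i=1}^{k}\sigma_i\,\E(B-\alpha)^+$, using subadditivity of $x\mapsto x^+$ over the partial sums. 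Actually a sharper route is to use the random-walk maximum bound together with the negative drift; but the union-type bound $\E\max\{0,S_1,\ldots,S_k\}\le \sum_i \E(\text{step}_i)^+$ already gives something clean, and I expect the intended argument uses a bound of this flavour, possibly refined.

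Next I would handle the idle time using identity \eqref{I}: taking expectations, $\sum_{i=1}^{n}\E I_i = \sum_{i=1}^{n-1}x_{i} + \E W_n + \mu_n - \sum_{i=1}^{n}\mu_i = \alpha\sum_{i=1}^{n-1}\sigma_i + \E W_n - \mu_n$ — wait, more carefully $\sum_{i=1}^{n-1}x_i-\sum_{i=1}^{n}\mu_i = \alpha\sum_{i=1}^{n-1}\sigma_i - \mu_n$, so $\sum_i \E I_i = \alpha\sum_{i=1}^{n-1}\sigma_i - \mu_n + \E W_n + \mu_n = \alpha\sum_{i=1}^{n-1}\sigma_i + \E W_n$. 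Hmm, that does not look dimensionally like only standard deviations; I would recheck the telescoping, but the upshot is that $\sum_i\E I_i$ is expressed in terms of $\alpha\sum_{i<n}\sigma_i$ and $\E W_n$, both of which are $O(\sum_{i<n}\sigma_i)$. Then the cost $\Obj{\Id}{\bm\mu+\alpha\bm\sigma}{\omega}=\omega\sum\E I_i+(1-\omega)\sum\E W_i$ becomes a weighted combination of $\alpha\sum_{i<n}\sigma_i$ and the quantities $\sum_{i}\E W_i\le \bigl(\sum_i\sigma_i\bigr)\E(B-\alpha)^+$ (crudely) or more precisely $\sum_{k}\sum_{i\le k}\sigma_i\,\E(B-\alpha)^+$ — here I'd want to be careful, since $\sum_{k=1}^{n}\sum_{i=1}^{k}\sigma_i$ is larger than $\sum_i\sigma_i$, so a pure union bound on each $W_k$ is too lossy. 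This tells me the real argument must bound $\E W_{k+1}$ by something proportional to $\sigma_k$ alone (the last, largest step), not to the whole partial sum — presumably exploiting that with negative drift the maximum is dominated by the final increment, via a reflection/ladder argument or via the fact that $\E W_{k+1}\le \E(\sigma_k B-\alpha\sigma_k)^+ + \E W_k$ combined with a geometric-type decay.

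So the refined plan: show by a Lindley-type induction that $\E W_{k+1}\le c\,\sigma_k$ for an appropriate constant $c=c(B,\alpha)$, using $W_{k+1}=(W_k+\sigma_k B^{(k)}-\alpha\sigma_k)^+$ together with $\sigma_k\ge\sigma_{k-1}$ and the negative drift $\alpha>0$; then $\sum_{i=1}^n\E W_i\le c\sum_{i=1}^{n-1}\sigma_i$ and likewise $\sum_i \E I_i\le (\alpha+c)\sum_{i=1}^{n-1}\sigma_i$ roughly, so $\Obj{\Id}{\bm\mu+\alpha\bm\sigma}{\omega}\le [\omega(\alpha+c)+(1-\omega)c]\sum_{i<n}\sigma_i$. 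Finally, substitute the explicit choice of $\alpha$ from \eqref{alphaatje}; the claim is that with this $\alpha$ the bracketed constant simplifies exactly to $\sqrt{2\omega}$. I would verify this by writing the bracket in terms of $\omega$, $\alpha$, and the moments $\E(B-\alpha)^{\pm}$, and checking that \eqref{alphaatje} is precisely the stationary point that minimizes it (this is the "rationale will become clear from the proof" remark in the text). The main obstacle is getting the correct, non-lossy bound on $\E W_{k+1}$ in terms of $\sigma_k$: a naive union bound over partial sums overcounts by a factor of $n$, so the negative-drift / monotone-$\sigma_i$ structure must be used quantitatively, and matching the bound to exactly $\sqrt{2\omega}\sum_{i<n}\sigma_i$ will require the algebra of the $\alpha$-optimization to come out cleanly — I would expect the details of both to be delegated to Appendix~\ref{App4} as the statement indicates.
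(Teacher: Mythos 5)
Your overall architecture matches the paper's: bound $\E \Wsvf_{k+1}$ by a constant multiple of $\sigma_k$ alone, use identity \eqref{I} to express the total idle time as $\alpha\sum_{i=1}^{n-1}\sigma_i+\E \Wsvf_n$, assemble the cost as $\omega\alpha\sum_{i<n}\sigma_i+\tfrac{\omega}{2\alpha}\sigma_{n-1}+\tfrac{1-\omega}{2\alpha}\sum_{i<n}\sigma_i$, and check that \eqref{alphaatje} is the minimizer (the last step also needs $\sigma_{n-1}\le\sum_{i<n}\sigma_i$ to turn the optimized value into $\sqrt{2\omega}\sum_{i<n}\sigma_i$). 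You also correctly diagnose that the union bound $\E W_{k+1}\le\sum_{i\le k}\sigma_i\,\E(B-\alpha)^+$ is too lossy. But the crucial step --- proving $\E \Wsvf_{k+1}\le \sigma_k/(2\alpha)$ --- is left as a wish. The mechanisms you float do not work as stated: the one-step recursion $\E W_{k+1}\le \E(\sigma_k B-\alpha\sigma_k)^+ + \E W_k$ is just the subadditive bound again and produces an upper bound that grows with $k$, so a mean-only induction hypothesis $\E W_k\le c\,\sigma_{k-1}$ cannot be closed (knowing $\E W_k$ does not control $\E(W_k+Y)^+$); and ``the maximum is dominated by the final increment'' is not a correct heuristic here.

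What the paper actually does is a \emph{distributional} comparison followed by Kingman's bound. Lemma~\ref{expand} shows that multiplying a suffix of the steps of a random walk by a constant $c\ge 1$ can only increase the maximum (because the increments after the argmax have nonnegative partial sums). Iterating this (Lemma~\ref{max}) shows $\Wsvf_{k+1}$ is stochastically dominated by the all-time maximum $M_k$ of an i.i.d.\ random walk with steps distributed as $\sigma_k(B-\alpha)$ --- this is exactly the quantitative use of the monotone-$\sigma_i$ structure you say must exist. Then Kingman's bound $\E M\le \Var(Y)/(2|\E Y|)$ applied to $Y=\sigma_k(B-\alpha)$ gives $\E \Wsvf_{k+1}\le\sigma_k/(2\alpha)$ (Lemma~\ref{UBW}), with a constant that depends only on $\alpha$ and not otherwise on $B$; this independence is what makes the $\alpha$-optimization come out to exactly $\sqrt{2\omega}$. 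Without these two ingredients your $c=c(B,\alpha)$ is unspecified and the final constant cannot be verified, so the proposal as written has a genuine gap at its central step.
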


\begin{proposition}\label{PropFreeLB}
Under Assumption $\ref{family}$, for any sequence and schedule,
\begin{equation*}
\Obj{\tau}{\bm{x}}{\omega}
 \geq  \left[\omega\E B(\omega)^- +(1-\omega)\E B(\omega)^+ \right]\sum_{i=1}^{n-1} \sigma_i.
\end{equation*}
\end{proposition}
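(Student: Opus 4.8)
The plan is to reduce the multi-patient lower bound to a sum of single-step contributions, each of which is handled by an optimal-quantile argument. Observe first that from the Lindley recursion \eqref{LindleyModel} we have, for each slot $i$, that $W_{i+1}-I_{i+1}=W_i+B_{\tau(i)}-x_{\tau(i)}$, so that
\begin{equation*}
\omega\, \E I_{i+1} + (1-\omega)\, \E W_{i+1}\;\geq\; \omega\,\E\bigl(W_i+B_{\tau(i)}-x_{\tau(i)}\bigr)^- + (1-\omega)\,\E\bigl(W_i+B_{\tau(i)}-x_{\tau(i)}\bigr)^+.
\end{equation*}
Since the map $t\mapsto \omega t^- + (1-\omega)t^+$ is convex and $W_i\geq 0$ is independent of $B_{\tau(i)}$, conditioning on $W_i$ and applying the elementary fact that $a\mapsto \E[\omega(Y+a-x)^- + (1-\omega)(Y+a-x)^+]$ is minimized over shifts in a way that only makes things larger when $a\geq 0$ — more precisely, for fixed $x$ the quantity $\E[\omega(B_{\tau(i)}+a-x)^- + (1-\omega)(B_{\tau(i)}+a-x)^+]$ is nondecreasing in $a$ for $a\geq 0$ when $x$ is chosen optimally — lets us drop the $W_i$ term. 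This gives
\begin{equation*}
\omega\,\E I_{i+1}+(1-\omega)\,\E W_{i+1}\;\geq\;\min_{x\in\R}\Bigl[\omega\,\E(B_{\tau(i)}-x)^- + (1-\omega)\,\E(B_{\tau(i)}-x)^+\Bigr].
\end{equation*}

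Next I would identify the minimizing $x$ via the quantile lemma referenced earlier in the paper (the ``Lemma~\ref{quantile}'' invoked in Example~\ref{noconstbound}): the optimal $x$ for $\E[\omega(B_{\tau(i)}-x)^-+(1-\omega)(B_{\tau(i)}-x)^+]$ is the $(1-\omega)$-quantile of $B_{\tau(i)}$. Under Assumption~\ref{family}, $B_{\tau(i)}\dis \mu_{\tau(i)}+\sigma_{\tau(i)}B$, so the $(1-\omega)$-quantile is $\mu_{\tau(i)}+\sigma_{\tau(i)}Q_B(1-\omega)$, and substituting gives
\begin{equation*}
\min_{x}\Bigl[\omega\,\E(B_{\tau(i)}-x)^- + (1-\omega)\,\E(B_{\tau(i)}-x)^+\Bigr] = \sigma_{\tau(i)}\Bigl[\omega\,\E B(\omega)^- + (1-\omega)\,\E B(\omega)^+\Bigr],
\end{equation*}
using the scaling $t^\pm$ behaves linearly under positive scalars and the definition $B(\omega)=B-Q_B(1-\omega)$. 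Summing this bound over $i=1,\dots,n-1$ (the slot-$(i+1)$ contributions for $i=1$ through $n-1$ account for waiting/idle in slots $2,\dots,n$, and since $W_1=I_1=0$ there is nothing lost from slot $1$), and noting $\sum_{i=1}^{n-1}\sigma_{\tau(i)} = \sum_{i=1}^{n-1}\sigma_i$ only holds if $\tau(n)=n$; in general $\sum_{i=1}^{n-1}\sigma_{\tau(i)}\geq \sum_{i=1}^{n-1}\sigma_i$ since the omitted index is at most the largest — wait, this needs care: $\sum_{i=1}^{n-1}\sigma_{\tau(i)} = \bigl(\sum_{i=1}^n\sigma_i\bigr) - \sigma_{\tau(n)} \geq \bigl(\sum_{i=1}^n\sigma_i\bigr) - \sigma_n = \sum_{i=1}^{n-1}\sigma_i$, which is exactly the direction we need. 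This yields the claimed bound for any sequence and schedule.

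The main obstacle I anticipate is the step that discards the accumulated waiting time $W_i$ inside the convex penalty: one must argue that replacing $W_i$ by $0$ (together with re-optimizing $x$) can only decrease the cost, which requires the monotonicity of $a\mapsto \E[\omega(B_{\tau(i)}+a-x^\ast(a))^- + (1-\omega)(B_{\tau(i)}+a-x^\ast(a))^+]$ in $a\geq0$ where $x^\ast(a)$ is the optimal shift — or, more cleanly, the observation that the minimum over $x$ of $\E[\omega(Y-x)^-+(1-\omega)(Y-x)^+]$ is translation-invariant in $Y$, so adding the nonnegative $W_i$ to $B_{\tau(i)}$ and then taking the infimum over $x$ gives exactly the same value, hence a lower bound. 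This last reformulation is the clean way to do it: $\inf_x \E[\omega(W_i+B_{\tau(i)}-x)^-+(1-\omega)(W_i+B_{\tau(i)}-x)^+] = \inf_x\E[\omega(B_{\tau(i)}-x)^-+(1-\omega)(B_{\tau(i)}-x)^+]$ by the substitution $x\mapsto x+W_i$ pointwise, and then one still needs to commute the $\inf_x$ past the outer expectation over $W_i$, which goes the right way by Jensen/interchange ($\E_{W_i}\inf_x(\cdots)\leq \inf_x\E_{W_i}(\cdots)$ is the wrong direction, so instead one fixes the globally optimal $x$ and bounds below pointwise in $W_i$, using that for that fixed $x$ the integrand is minimized over $W_i\geq0$ at a point giving at least the quantile-optimal value — this is where the genuine work lies and where I would be most careful).
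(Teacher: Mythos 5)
Your proposal follows the paper's proof essentially step for step: the Lindley decomposition of $\omega\E I_{i+1}+(1-\omega)\E W_{i+1}$, the reduction to the newsvendor minimum via Lemma~\ref{quantile}, the location-scale identity producing the factor $\sigma_{\tau(i)}\left[\omega\E B(\omega)^-+(1-\omega)\E B(\omega)^+\right]$, and the closing observation that $\sum_{i=1}^{n-1}\sigma_{\tau(i)}=\sum_{i=1}^{n}\sigma_i-\sigma_{\tau(n)}\geq\sum_{i=1}^{n-1}\sigma_i$. The one step you flag as ``where the genuine work lies'' is in fact immediate and needs no interchange of an infimum with an expectation: $W_i$ is a function of $B_{\tau(1)},\dots,B_{\tau(i-1)}$ and hence independent of $B_{\tau(i)}$, so conditioning on $W_i=w$ turns $x_{\tau(i)}-W_i$ into the constant $c=x_{\tau(i)}-w$, and Lemma~\ref{quantile} gives the pointwise bound $\E\left[\omega(B_{\tau(i)}-c)^-+(1-\omega)(B_{\tau(i)}-c)^+\right]\geq \min_{c'}\E\left[\omega(B_{\tau(i)}-c')^-+(1-\omega)(B_{\tau(i)}-c')^+\right]$ for every realization $w$; averaging over $w$ preserves the inequality. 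Your detours through monotonicity in $a$, translation invariance ``pointwise in $W_i$'', and the direction of $\E\inf$ versus $\inf\E$ are all unnecessary, since the schedule entry $x_{\tau(i)}$ is a fixed number and the bound holds realization by realization of $W_i$; this is precisely what the paper means by minimizing over all possible values of $x_{\tau(k)}-W_k$.
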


The idea behind the proof of Proposition~\ref{SVF} is as follows. We use that the waiting time can be expressed as the maximum of a random walk, as per \eqref{W}. An upper bound for this maximum can now be found by using a comparison with another random walk that has i.i.d.\ steps, each distributed as the step of the original random walk with the largest variance. This upper bound is found by noting that if one (i) splits the steps in two parts, and (ii) multiplies the last part by some constant larger than one (leaving the first part unchanged), then the maximum increases. For the maximum of the new i.i.d.\ random walk, the classical \emph{Kingman's bound} can be applied. After thus finding an upper bound on the expected waiting time, the expected idle time can then also be bounded using \eqref{I}.

The idea behind the proof of Proposition~\ref{PropFreeLB} is to write
\begin{align*}
\omega\E I_{k+1}+(1-\omega)\E W_{k+1}=\omega\E(W_k+B_{\tau(k)}-x_{\tau(k)})^-+(1-\omega)\E(W_k+B_{\tau(k)}-x_{\tau(k)})^+,
\end{align*}
and minimize this over $W_k-x_{\tau(k)}$. This minimization problem is the classical \emph{newsvendor problem}, which has a known solution. This results in a lower bound on the cost function that is independent of the schedule. This lower bound can also be easily minimized over the sequences, resulting in Proposition~\ref{PropFreeLB}.

When $\omega=\frac12$, i.e.\ when waiting time and idle time are equally important, we know that $Q_B(\frac12)$ is equal to the median of $B$. In this case, we have $K\left(B,\frac12\right)=2/\E|B-m|$, where $m$ is the median of $B$.


\subsection{Examples}\label{examples}

In this subsection we present  examples of location-scale families for which we can compute $K(B,\omega)$ or $K\left(B,\frac12\right)$ from Theorem~\ref{approx}, so as to obtain insight into the magnitude of the constant $K(B,\omega)$. For the location-scale families of normal, uniform, shifted exponential and Laplace distributions, the results are shown in Table \ref{table}. For normal distributions $K(B,\omega)$ is not shown, as the expression does not simplify (with respect to the one presented  in Theorem~\ref{approx}).

{\small
\begin{table}[h]
\centering
\bgroup
\def\arraystretch{1.4}
\begin{tabular}{|l|c|c|}
\hline
Location-scale family & $K(B,\omega)$ & $K\left(B,\frac12\right)$ \\ \Xhline{4\arrayrulewidth}
Normal & See Theorem~\ref{approx} & $\sqrt{2\pi}\approx 2.51$ \\ \hline
Uniform & $\displaystyle\frac{1}{1-\omega}\sqrt{\frac{2}{3\omega}}$ & $\frac43\sqrt3\approx 2.31$ \\ \hline
Shifted exponential & $\displaystyle-\frac{\sqrt2}{\sqrt{\omega}\ln(\omega)}$ & $ {2}/{\ln(2)}\approx 2.89$ \\ \hline
Laplace & $\displaystyle\frac{2\sqrt{\omega}}{\min\{\omega,1-\omega\}(1-\ln(2\min\{\omega,1-\omega\}))}$ & $2\sqrt2\approx 2.83$ \\
\hline
\end{tabular}
\egroup
\caption{The values of $K(B,\omega)$ and $K\left(B,\frac12\right)$ for some location-scale families.} \label{table}
\end{table}}

Now  consider the case of Pareto (of type II, that is) distributions. A random variable $X$ has such a distribution if 
\begin{align*}
\p(X>x) = \left(1+\frac{x-\mu}{\sigma}\right)^{-\beta} \text{ for } x\geq  \mu,
\end{align*}
for certain parameters $\mu, \sigma>0, \beta$.
The Pareto  distributions with fixed parameter $\beta$ form a location-scale family. Suppose that the $B_i$ have Pareto distributions with fixed parameter $\beta>2$. Then
\begin{align*}
K(B,\omega) =\sqrt{\frac{2\omega\beta}{\beta-2}}\bigg/\left[-2\omega^{-\beta(\beta-1)+1} +\omega^{-\beta(\beta-1)}-(\beta-1)\omega^{\beta+1} +\beta\omega\right].
\end{align*}
In addition,
\begin{align*}
K\left(B,\tfrac12\right)=2\sqrt{\frac{\beta}{\beta-2}}\bigg/ \left[\beta-\left(\tfrac12\right)^\beta(\beta-1)\right].
\end{align*}

For most typical location-scale families, the value of $K\left(B,\frac12\right)$ is between 2 and 3. However, in the Pareto case the value becomes much larger when $\beta$ approaches two. Also, for $\omega$ close to either one of the extremes 0 or 1, the constant $K(B,\omega)$ blows up.


\subsection{Lognormally distributed service times}\label{lognormal2}

In this subsection we focus on the case of lognormal service times. The scheduling rule that we use here is such that the interarrival times are a multiple (larger than 1) of the mean service times, rather than the mean service times increased by  a multiple of the corresponding standard deviations.
    As will become clear below, this type of scheduling rule turns out to be convenient for the case of lognormal distributions. Note that, with $m_i:=\E[\ln B_i]$ and $s_i^2:=\Var(\ln B_i)$, we have that $\E B_i = \exp(m_i + s_i^2/2)$, so that the rule involves both parameters of the lognormal distribution.
We have the following result.

\begin{theorem}\label{rLognormal}
Suppose the $B_i$ are lognormally distributed with $m_1\leq \dots\leq m_n$ and $s_1^2\leq \dots\leq s_n^2$. When we use the schedule $\bm{x}=(1+\alpha)\bm{\mu}$ for the \svf{} sequence, with
\begin{align*}
\alpha=\frac{1}{\sqrt{2\omega}} \sqrt{(\exp(s_{n-1}^2)-1)},
\end{align*}
then
\begin{align*}
    r_{\omega}\leq 2\omega\alpha \cdot \bigl[(1-\omega)\p(Z\geq Q_Z(1-\omega)-s_1)-\omega\,\p(Z\leq Q_Z(1-\omega)-s_1)\bigr]^{-1}.
\end{align*}
\end{theorem}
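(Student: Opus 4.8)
The plan is to mirror the structure used for location-scale families in Theorem~\ref{approx}: bound the \svf{} cost from above by something proportional to $\sum_{i=1}^{n-1}\sigma_i$ (or rather the analogous lognormal quantity), bound the optimal cost from below by the same quantity up to a constant, and take the ratio. The key change is the multiplicative schedule $\bm{x}=(1+\alpha)\bm{\mu}$, which plays well with lognormals because if $B_i$ is lognormal with $\ln B_i\sim \mathcal N(m_i,s_i^2)$, then $B_i-x_i = B_i - (1+\alpha)\mu_i$ has the same sign behaviour as $\exp(m_i+s_iZ) - (1+\alpha)\exp(m_i+s_i^2/2)$, and after dividing by $\mu_i$ the threshold becomes $\exp(s_iZ - s_i^2/2) \gtrless 1+\alpha$, i.e.\ a condition on $Z$ alone, scaled only through $s_i$. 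This is what makes the lognormal case tractable without a location-scale structure.

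\textbf{Upper bound on the \svf{} cost.} First I would express $W_{k+1}$ via \eqref{W} as the maximum of a random walk with steps $B_{i}-x_{i} = B_i - (1+\alpha)\mu_i$, and follow the idea sketched after Proposition~\ref{SVF}: replace each step by one whose (suitably normalized) variability is dominated by the largest one, namely the step corresponding to $s_{n-1}$ among the first $n-1$ patients, so that Kingman's bound applies to an i.i.d.\ random walk. The drift of a step is $-\alpha\mu_i$, which is strictly negative, so Kingman's bound gives $\E W_{k+1}\le \var(\text{step})/(2\alpha\mu_i\text{-type drift})$. Crucially, because the ordering is $s_1\le\cdots\le s_n$ and the \svf{} sequence places patient $i$ in slot $i$, the relevant "largest" step seen by slot $k+1$ has $\ln$-variance at most $s_{n-1}^2$; the variance of $B_i-(1+\alpha)\mu_i$ equals $\mu_i^2(\exp(s_i^2)-1)$, so the choice $\alpha = \frac{1}{\sqrt{2\omega}}\sqrt{\exp(s_{n-1}^2)-1}$ is exactly what makes the per-step bound clean and the telescoped sum collapse to something proportional to $\sum_{i=1}^{n-1}\mu_i\sqrt{\exp(s_{n-1}^2)-1}$ — or, after a little more care, to $\sqrt{2\omega}\,\alpha$ times a sum of $\mu_i$'s. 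The expected idle time is then handled through identity~\eqref{I} as in the proof of Proposition~\ref{SVF}. The upshot should be $\Obj{\Id}{(1+\alpha)\bm\mu}{\omega}\le 2\omega\,\alpha\,\sum_{i=1}^{n-1}\mu_i$ (up to constants I would pin down in the calculation).

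\textbf{Lower bound on the optimal cost, and the ratio.} For the lower bound, I would reuse the newsvendor argument behind Proposition~\ref{PropFreeLB}: for any sequence and any schedule, $\omega\E I_{k+1}+(1-\omega)\E W_{k+1}\ge \min_{t}\bigl[\omega\E(B_{\tau(k)}-t)^- + (1-\omega)\E(B_{\tau(k)}-t)^+\bigr]$, and the minimizing $t$ is the $(1-\omega)$-quantile of $B_{\tau(k)}$. For a lognormal $B_j$ this quantity evaluates, after writing $B_j=\mu_j e^{s_jZ-s_j^2/2}$ and using $Q_{B_j}$ in terms of $Q_Z$, to $\mu_j$ times the bracket $(1-\omega)\p(Z\ge Q_Z(1-\omega)-s_j) - \omega\,\p(Z\le Q_Z(1-\omega)-s_j)$ appearing in the theorem (this bracket is the residual expected deviation from the optimal quantile, normalized). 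Since this expression is increasing in the relevant direction, minimizing over which patient sits in slot $k+1$ and summing over $k$ gives a lower bound proportional to $\sum_{j}\mu_j$ with the constant governed by the \emph{smallest} $s_j$, i.e.\ $s_1$ — hence the $s_1$ in the statement. Dividing the upper bound by the lower bound, the $\sum\mu_i$ factors cancel and we are left precisely with $r_\omega \le 2\omega\alpha\bigl[(1-\omega)\p(Z\ge Q_Z(1-\omega)-s_1)-\omega\,\p(Z\le Q_Z(1-\omega)-s_1)\bigr]^{-1}$.

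\textbf{Main obstacle.} The delicate point is the stochastic-comparison step in the upper bound: unlike the location-scale case, here the steps $B_i-(1+\alpha)\mu_i$ are not scalar multiples of a common random variable, so "replacing a step by a larger-variance one" needs a genuine convex/dilation-order argument (via Assumption~\ref{AStochDom}, which lognormals with ordered $m_i$ and $s_i$ satisfy by the appendix result, together with Lemma~\ref{condExp}) plus the observation that the maximum of a random walk is convex in each step. One must also be careful that multiplying the tail part of the walk by a constant $>1$ while keeping the drift controlled is legitimate — this is the "(i) split, (ii) scale" trick mentioned after Proposition~\ref{SVF}, and verifying it increases the expected maximum in this non-identically-distributed lognormal setting is where the real work lies. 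The rest is bookkeeping: tracking the constants through Kingman's bound and identity~\eqref{I}, and confirming the newsvendor optimum's value for a lognormal reduces to the stated $Z$-probabilities.
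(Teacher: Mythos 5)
Your outline follows the same route as the paper's proof: an upper bound on the \svf{} cost obtained by dominating the waiting-time random walk by an i.i.d.\ walk with steps distributed as $B_k-x_k$ and then applying Kingman's bound together with identity \eqref{I} (the paper's Lemma~\ref{maxLog} and Proposition~\ref{LognormalUB}, which after optimizing over $\alpha$ give $\Obj{\Id}{(1+\alpha)\bm{\mu}}{\omega}\leq 2\omega\alpha\sum_{i=1}^{n-1}\exp(m_i+s_i^2/2)$), a newsvendor lower bound valid for every sequence and schedule with the bracket minimized at $s_1$ by monotonicity in $s_{\tau(k)}$ (Proposition~\ref{LognormalLB}), and a ratio in which the factors $\sum_i\exp(m_i+s_i^2/2)$ cancel. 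The one substantive step you flag but defer --- that each step $B_i-x_i$ can be replaced by a step distributed as $B_{i+1}-x_{i+1}$ without decreasing the expected maximum --- is precisely where the paper's work lies, and it is not quite a direct appeal to Assumption~\ref{AStochDom} as you suggest: since the steps carry drifts $-\alpha\mu_i$ that differ across $i$, the dilation ordering of the $B_i$ themselves does not apply directly, and a bespoke comparison is built in two stages. First, the split-and-scale trick (Lemma~\ref{expand}) is applied with the explicit factor $\exp(m_{i+1}-m_i+(s_{i+1}^2-s_i^2)/2)\geq 1$, yielding the intermediate variable $X_i'=\exp(m_{i+1}+(s_{i+1}^2-s_i^2)/2)\left[\exp(s_iZ)-(1+\alpha)\exp(s_i^2/2)\right]$; second, writing $B_{i+1}-x_{i+1}\dis \exp(m_{i+1}+s_iZ+\epsilon Z')-(1+\alpha)\exp(m_{i+1}+(s_i^2+\epsilon^2)/2)$ with $\epsilon=\sqrt{s_{i+1}^2-s_i^2}$ and $Z'$ an independent standard normal, one checks $\E[B_{i+1}-x_{i+1}\,|\,X_i']=X_i'$, so $X_i'\cleq B_{i+1}-x_{i+1}$ by Lemma~\ref{condExp} and the replacement is legitimate by convexity of the maximum in each step. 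With that construction supplied, your argument closes exactly as the paper's does; the remaining items in your sketch (the evaluation of the newsvendor optimum as $\exp(m_j+s_j^2/2)$ times the stated $Z$-probabilities, and the choice of $\alpha$ balancing the two terms of the upper bound) match the paper's computations.
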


The proof can be found in Appendix~\ref{App4}. 
It is based on a similar idea as the one used in the proof of Theorem~\ref{approx}. The main difference is in the upper bound, where it needs to be proved that the i.i.d.\ random walk used for comparison indeed has a larger expected maximum. For lognormal distributions, we use a convex ordering among the stepsize distributions to prove this, noting that the maximum of a random walk is a convex function in the stepsizes. 

As an example, we apply Theorem~\ref{rLognormal} to the data found in \cite{Cayirli}. 
Recall from Section \ref{lognormal} that the patients were divided into ``new'' and ``return'' patients, with service times fitted by lognormal distributions with parameters given in Table~\ref{tbl:cayirli}.
It can be checked that any problem instance containing a mix of these patient groups satisfies the assumptions of Theorem~\ref{rLognormal}. For any such problem instance, the largest possible $s_{n-1}$ corresponds to a new patient, and the smallest possible $s_1$ corresponds to a return patient. When setting $\omega=\frac12$, calculating the upper bound in Theorem~\ref{rLognormal}, we find for this example that $r_{\frac{1}{2}}\leq 2.90$.


\subsection{Example where \svf{} is not asymptotically optimal}\label{AsymptoticExample}

In Section \ref{sec:asymptotic} we showed that the \svf{} sequence is asymptotically optimal as $n\to\infty$ when mean-based schedules are used. 
Perhaps surprisingly, this is no longer true when optimally-spaced schedules are used.
The example described in this section satisfies the condition used in the mean-based case, Assumption~\ref{assumption}.
Note that the example does not satisfy Assumption~\ref{AStochDom} {(the dilation ordering assumption)}. 

The example is as follows. We set $\omega=\frac12$, and we have two groups of patients, each consisting of $n/2$ patients (where $n$ will be large).
Let $c$ be some positive integer that we will eventually allow to grow very large, and let $a$ be a fixed constant larger than $1$ that we will choose later.
The service time distributions of the two groups are given by
\begin{equation*}
    B^{(1)} =\begin{cases} c&\text{ with prob. }p:=\frac{1}{1+c^2}, \\
-\frac{1}{c} &\text{ with prob. } 1-p,
\end{cases} \qquad \qquad
B^{(2)} =\begin{cases} a&\text{ with prob. }\frac12,\\
-a &\text{ with prob. }\frac12.
\end{cases}
\end{equation*}
These are chosen to have mean zero for convenience; by shifting them (which would shift the optimal schedule by the same amount), these can be adjusted to have nonnegative support.

The variance of the service times of the group 1 (group 2) patients  is $1$ ($a^2 > 1$, respectively).
This means that the \svf{} sequence first serves all patients of group 1, followed by all patients from group 2.
We will compare the \svf{} sequence with the \emph{mixed sequence},
where patients of group 1 and 2 are served alternatingly starting with group 1. 
As we will see, for large enough $c$ the mixed sequence will be cheaper under optimal scheduling, thus refuting the asymptotic optimality of \svf{}.
Since we will consider a {large-$n$} limit, we will consider the average per-patient cost, consisting of the average expected waiting time and idle time per patient.

We begin by analyzing the \svf{} sequence.
As we need a lower bound on its cost, we need to determine the optimal schedule.
Fortunately, in the limit of large $n$, the situation simplifies.
Each group can be treated as a stationary queue, with an associated expected waiting and idle time for each group at stationarity
(patients at the beginning of the group, before stationarity is achieved, make a negligible contribution to the total cost).
We use $W^{(i)}$ and $I^{(i)}$ to denote the distributions of waiting time and idle time for patients in group~$i$.
We need to determine the optimal stationary interarrival times $x^{(1)}$ and $x^{(2)}$ for patients in groups 1 and 2, respectively.
As a first remark, the average expected idle time of a patient in group $i$ is simply $x^{(i)}$; this is a consequence of the Lindley  {recursions} \eqref{LindleyModel} combined with stationarity, which yields
\[
    \E \left[W^{(i)} - I^{(i)}\right] = \E \left[ W^{(i)} + B^{(i)} - x^{(i)}\right] = \E W^{(i)} - x^{(i)}.
\]
The average expected waiting time is more challenging,
but we can compute it exactly in the limit as $c \to \infty$; we defer the argument to Appendix~\ref{AsExampleApp}.

\begin{lemma}\label{lem:svfinterarrival}
    For the \svf{} sequence in the limit $c\to\infty$, the optimal interarrival times for groups $1$ and $2$ are $x^{(1)}=\frac12\sqrt2$ and $x^{(2)}=a$ respectively.
    Patients in group $1$ incur an average expected waiting time of $\E W^{(1)} = \tfrac12\sqrt{2}$, 
    whereas patients in group $2$ incur no waiting time.
    This yields an average total cost of $\tfrac12(a+\sqrt2)$.
\end{lemma}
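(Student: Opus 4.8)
The plan is to analyze each group as a stationary single-server queue under the Lindley recursion \eqref{LindleyModel}, and to optimize the per-patient cost $\tfrac12(\E W^{(i)} + \E I^{(i)})$ separately over the interarrival time $x^{(i)}$ for each group (this separation is legitimate because, in the large-$n$ limit, the boundary effect of coupling the two groups is negligible, and the idle time before the first group-2 patient is also a negligible contribution). First I would record the stationary identity $\E I^{(i)} = x^{(i)} - \E[B^{(i)}] = x^{(i)}$, valid because $\E[W^{(i)} - I^{(i)}] = \E W^{(i)} - x^{(i)}$ at stationarity and $\E B^{(i)} = 0$; this reduces each group's problem to minimizing $\tfrac12(\E W^{(i)} + x^{(i)})$ over $x^{(i)} > 0$.

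For group 2, the service time $B^{(2)}$ takes values $\pm a$ with probability $\tfrac12$ each. I would argue that choosing $x^{(2)} = a$ makes the increment $B^{(2)} - x^{(2)} \in \{0, -2a\}$ nonpositive, so the stationary waiting time is identically zero; hence the group-2 cost is $\tfrac12 a$. To see this is optimal, note that any $x^{(2)} < a$ gives a positive probability that $B^{(2)} - x^{(2)} > 0$, producing strictly positive stationary waiting time, while also the idle-time term $\tfrac12 x^{(2)}$ is minimized... here I need the trade-off: decreasing $x^{(2)}$ below $a$ lowers the idle term but creates waiting time, and one checks the derivative of the total cost is nonpositive up to $x^{(2)} = a$ and the waiting contribution only switches on at $x^{(2)} = a$; increasing $x^{(2)}$ past $a$ only increases the idle term without removing any waiting. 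So $x^{(2)} = a$ is optimal with total group-2 cost $\tfrac12 a$.

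For group 1 in the limit $c \to \infty$, the service time $B^{(1)}$ equals $c$ with probability $p = 1/(1+c^2) \sim c^{-2}$ and $-1/c$ otherwise. The key observation is that a single ``large'' service time of size $c$ causes a cascade of waiting: if the interarrival time is $x^{(1)}$, then after a big jump the backlog is roughly $c - x^{(1)}$ (minus the small downward drift $1/c + x^{(1)}$ per subsequent step, but $x^{(1)}$ will be $\Theta(1)$ so the draining takes $\Theta(c)$ steps), and big jumps occur with rate $\Theta(c^{-2})$ per patient. I would make this precise by computing the stationary mean waiting time directly: big jumps arrive as (approximately) a Bernoulli thinning, and between them the walk decreases linearly at rate $x^{(1)} + 1/c$; summing the triangular backlog profile over a cycle and dividing by the mean cycle length $1/p \sim c^2$ gives $\E W^{(1)} \to \tfrac{1}{2 x^{(1)}}$ as $c \to \infty$ (the $c$'s cancel: backlog height $\sim c$, duration $\sim c/x^{(1)}$, area $\sim c^2/(2 x^{(1)})$, cycle length $\sim c^2$). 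Then the group-1 per-patient cost is $\tfrac12\bigl(\tfrac{1}{2x^{(1)}} + x^{(1)}\bigr)$, which is minimized at $x^{(1)} = \tfrac{1}{\sqrt 2} = \tfrac12\sqrt 2$, giving $\E W^{(1)} = \tfrac12\sqrt 2$ and group-1 cost $\tfrac12\sqrt 2 \cdot \tfrac12 \cdot \sqrt{2}$... more carefully, cost $= \tfrac12(\tfrac{1}{\sqrt2 \cdot ... })$; substituting $x^{(1)} = 1/\sqrt2$ gives $\tfrac12(\tfrac{\sqrt2}{2} + \tfrac{1}{\sqrt2}) = \tfrac12\sqrt2$. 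Adding the two groups' contributions (each group being half the patients) gives the claimed average total cost $\tfrac12\cdot\tfrac12\sqrt2 + \tfrac12\cdot\tfrac12 a$... wait — the statement says total cost $\tfrac12(a+\sqrt2)$, which matches if we do not halve, i.e. the ``average total cost'' is the sum of the two groups' per-patient costs $\tfrac12\sqrt2 + \tfrac12 a = \tfrac12(a + \sqrt2)$, interpreting it as expected waiting plus idle summed appropriately; I would reconcile the bookkeeping carefully in the write-up.

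The main obstacle I anticipate is making the $c \to \infty$ limit for group 1 rigorous: one must justify that the stationary waiting-time distribution of the Lindley recursion with these $c$-dependent steps converges (after no rescaling) so that $\E W^{(1)} \to 1/(2x^{(1)})$, and in particular that the contribution of the rare big jumps is captured exactly by the triangular-area heuristic and that overshoot/interference between successive big jumps is negligible (it is, since two big jumps within $O(c)$ steps of each other has probability $O(c^{-1})$). This is precisely the computation deferred to Appendix~\ref{AsExampleApp}; here in the lemma I would only state the result, using that the rare-event cycle structure makes $\E W^{(1)}$ asymptotically a clean deterministic function of $x^{(1)}$ that can be minimized by calculus.
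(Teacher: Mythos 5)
Your overall strategy coincides with the paper's: treat each group as a stationary queue, use $\E I^{(i)} = x^{(i)}$ (since $\E B^{(i)}=0$), and optimize the two interarrival times separately. For group 1 you re-derive $\E W^{(1)} \to 1/(2x^{(1)})$ by a regenerative ``triangular backlog'' argument; the paper instead simply invokes the known fact (Daley, 1997) that Kingman's bound $\E W^{(1)} \leq \Var(B^{(1)})/(2x^{(1)})$ is asymptotically tight for this family of distributions as $c\to\infty$. Your cycle heuristic is essentially the standard proof of that tightness, so this part is sound in spirit, though as you note it would take real work to make rigorous, whereas the citation disposes of it in one line.

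The genuine gap is in the group-2 optimality claim. You assert that for $x^{(2)}<a$ ``the derivative of the total cost is nonpositive,'' i.e.\ that the waiting time created by shrinking $x^{(2)}$ at least offsets the idle time saved, but you never establish this; without a quantitative lower bound on $\E W^{(2)}$ as a function of $x^{(2)}$, the possibility that some $x^{(2)}<a$ beats $x^{(2)}=a$ is not excluded. The paper closes this with one clean observation: the random walk with steps $B^{(2)}-x^{(2)}$ rises by $a-x^{(2)}$ for a geometrically distributed number of steps (mean $1$) before the first $-a$ service time, so $\E W^{(2)} \geq a - x^{(2)}$, and hence $\E I^{(2)} + \E W^{(2)} \geq x^{(2)} + (a - x^{(2)}) = a$ for \emph{every} $x^{(2)}$, with equality at $x^{(2)}=a$. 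You need this (or an equivalent bound) to finish. Separately, you leave the factor-of-$\tfrac12$ bookkeeping unresolved; the lemma's ``average total cost'' is the per-patient average of the unweighted sum $\E I + \E W$ (each group contributing half the patients), which gives $\tfrac12(\sqrt2 + a)$ exactly --- this should be pinned down rather than flagged, since the constant enters the final lower bound on $r_{1/2}$.
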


We now move to the mixed sequence.
Here, we need an upper bound on the cost, and so we can use any {scheduling} rule we like.
We will simply use the same interrarival times as were used in the \svf{} sequence: 
a patient in group $i$ has an interarrival time of $x^{(i)}$.
Just as with the \svf{} sequence, in the limit as $n \to \infty$ the average expected idle time is $\tfrac12(x^{(1)} + x^{(2)}) = \tfrac12(a + \tfrac12\sqrt{2})$.
The main challenge is to bound the average expected waiting time.

Let $V^{(i)}$ be the waiting time experienced by a group $i$ patient at stationarity.
Then $V^{(i)}$ is the maximum of the random walk with alternating steps distributed as $B^{(1)}-x^{(1)}$ and $B^{(2)}-x^{(2)}$.
The random walk for $V^{(1)}$ starts with a step $B^{(2)}-x^{(2)}$ and the random walk for $V^{(2)}$ starts with a step $B^{(1)}-x^{(1)}$.
This difference vanishes in the limit  {as $c$ grows large} (see Appendix~\ref{AsExampleApp} for the proof).

\begin{lemma}\label{lem:bothsame}
$\lim_{c\to\infty} \E V^{(1)}=\lim_{c\to\infty} \E V^{(2)}$.
\end{lemma}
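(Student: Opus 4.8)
The plan is to show that both $\E V^{(1)}$ and $\E V^{(2)}$ converge, as $c \to \infty$, to the same limit, namely the stationary expected waiting time of an alternating random walk whose steps are $B^{(2)} - x^{(2)}$ and the \emph{limiting} version of $B^{(1)} - x^{(1)}$. The key observation is that as $c \to \infty$, the step $B^{(1)} - x^{(1)}$ converges in distribution to the deterministic value $-x^{(1)} = -\tfrac12\sqrt{2}$: the atom at $c$ has probability $p = 1/(1+c^2) \to 0$, the atom at $-1/c$ converges to $0$, and although the large value $c$ has vanishing probability, the product $c \cdot p \to 0$, so the contribution of this atom to expectations (and to the maximum of the random walk) also vanishes in the relevant sense. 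Thus in the limit, group-$1$ patients effectively contribute a deterministic negative drift $-x^{(1)}$ to the walk, and only group-$2$ steps are random.

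First I would set up both waiting times as maxima of the alternating random walk, using \eqref{W}: $V^{(i)}$ is the supremum over $j$ of the partial sums of i.i.d.\ blocks, where $V^{(1)}$'s walk reads off partial sums just \emph{before} each group-$1$ step (equivalently, its first step is $B^{(2)} - x^{(2)}$), while $V^{(2)}$'s first step is $B^{(1)} - x^{(1)}$. Grouping the steps into consecutive pairs, one sees that $V^{(1)}$ and $V^{(2)}$ are maxima of the \emph{same} i.i.d.\ random walk with step distribution $(B^{(1)} - x^{(1)}) + (B^{(2)} - x^{(2)})$, except that (a) they are sampled at offset positions within a pair, and (b) the walk for $V^{(2)}$ has one extra leading step of type $B^{(1)} - x^{(1)}$. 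I would then argue that the difference between these two maxima is controlled by a single step of type $B^{(1)} - x^{(1)}$ or $B^{(2)} - x^{(2)}$: more precisely, $|V^{(1)} - V^{(2)}|$ is stochastically bounded by something like $|B^{(1)} - x^{(1)}| + |B^{(2)} - x^{(2)}|$ evaluated at an independent copy, plus possibly the overshoot term. Taking expectations, $|\E V^{(1)} - \E V^{(2)}| \leq \E|B^{(1)} - x^{(1)}| + (\text{bounded terms involving } B^{(2)})$, but this is not yet small because $\E|B^{(1)} - x^{(1)}|$ stays bounded away from $0$ (the atom at $c$ contributes $\approx c \cdot p = c/(1+c^2) \to 0$, so actually this \emph{does} go to $\tfrac12\sqrt2$, not $0$).

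So the cruder bound does not suffice, and the main obstacle is precisely this: a single group-$1$ step does not vanish in $L^1$, because $\E(B^{(1)})^+ = c \cdot p \to 0$ but $\E(B^{(1)})^- = (1/c)(1-p) \to 0$ as well — wait, both of these \emph{do} vanish, so in fact $B^{(1)} \to 0$ in $L^1$ and $B^{(1)} - x^{(1)} \to -x^{(1)}$ in $L^1$. The genuinely delicate point is rather that the \emph{maximum} of the random walk is sensitive to the rare large excursions caused by the atom at $c$: with $n/2$ group-$1$ patients, one expects on the order of $np/2$ occurrences of the value $c$, and $np \sim n/c^2$; we need $c$ to grow fast enough (relative to the $n \to \infty$ limit already taken, i.e.\ $c \to \infty$ \emph{after} $n \to \infty$, so stationarity is reached first) that these excursions do not inflate the stationary waiting time. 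Concretely I would: (i) fix the order of limits — first $n \to \infty$ to pass to the stationary quantities $V^{(i)}$ defined as maxima of infinite alternating walks with negative drift, then $c \to \infty$; (ii) write $V^{(i)}_c = \max(V^{(i),\text{det}}_c, V^{(i),\text{exc}}_c)$ splitting the walk into the contribution ignoring all group-$1$ jumps to $c$, and the contribution of paths that do hit such a jump; (iii) show the excursion part contributes $o(1)$ to $\E V^{(i)}$ as $c \to \infty$, using that each such jump has probability $p$ and the walk has negative drift $-\tfrac12(x^{(1)} + x^{(2)} - a) < 0$, so geometric-tail estimates on the maximum apply; (iv) observe that the deterministic part is literally the same walk for $V^{(1)}$ and $V^{(2)}$ up to a shift by at most one deterministic step $-x^{(1)}$, whose effect on the expected maximum vanishes because adding a single bounded step to a negative-drift walk changes the expected all-time maximum by a bounded amount — and here I would instead use a direct coupling: the two walks can be coupled to differ by exactly the leading $-x^{(1)}$ step, so $|V^{(1)}_c - V^{(2)}_c| \leq x^{(1)} + |B^{(1)}_c - x^{(1)}|$ pathwise after alignment, and the right side has expectation $\to x^{(1)} + x^{(1)} = \sqrt2$... which again is not $o(1)$.

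\emph{Revised cleaner route.} Rather than fight the leading-step discrepancy head-on, I would exploit that $V^{(1)}$ and $V^{(2)}$ are both maxima of the \emph{same doubly-infinite} stationary walk, merely read off at interleaved index sets. Precisely: let $(\Delta_k)_{k \in \Z}$ be the stationary sequence of steps alternating between $B^{(1)} - x^{(1)}$ and $B^{(2)} - x^{(2)}$, and let $M_j = \sup_{\ell \leq j}\sum_{i=\ell+1}^j \Delta_i$ be the stationary maximum ending at index $j$. Then $\E V^{(1)} = \E M_{j}$ for $j$ just before a group-$1$ step and $\E V^{(2)} = \E M_{j'}$ for $j'$ just before a group-$2$ step; by stationarity these differ only in which step type is the ``most recent'' one. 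Then $M_{j'} - M_{j}$ for adjacent such indices equals $(M_{j} + \Delta_{j'})^+$-type corrections bounded by $|\Delta_{j'}|$, and summing telescopically over one period gives $|\E V^{(1)} - \E V^{(2)}| \leq \E|B^{(1)}_c - x^{(1)}|$. Now the point: as $c \to \infty$, $B^{(1)}_c \to 0$ in probability, but $\E|B^{(1)}_c - x^{(1)}| \to x^{(1)} = \tfrac12\sqrt2 \neq 0$. So \emph{even this} does not close the gap, which tells me the telescoping must be done more carefully, retaining that what actually enters is $\E\big[\min\{|\Delta_{j'}|, M_j\}\big]$ or that the relevant discrepancy is the \emph{overshoot above $0$}, which \emph{does} vanish. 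I would therefore bound $|V^{(1)}_c - V^{(2)}_c|$ by the overshoot of the walk past level $0$ at the single disputed step, and show (using negative drift plus the fact that $B^{(1)}_c$'s positive part has $L^1$-norm $cp \to 0$) that this overshoot has expectation $\to 0$. That overshoot estimate is the technical heart of the argument; everything else is bookkeeping with the Lindley recursion and stationarity, and I would relegate it, together with the order-of-limits justification, to Appendix~\ref{AsExampleApp} as the statement of the lemma indicates.
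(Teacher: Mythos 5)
Your final route is, in essence, the paper's argument, but you never close it, and you misdiagnose where the work lies. The step you defer as ``the technical heart'' --- an overshoot estimate said to require negative drift and excursion/geometric-tail analysis --- is in fact a one-line consequence of the Lindley recursion. By stationarity, $V^{(2)} \dis (V^{(1)} + B^{(1)} - x^{(1)})^+$ with $B^{(1)}$ independent of $V^{(1)}$, and since $(v+y)^+ \leq v + y^+$ for $v \geq 0$, this gives
$\E V^{(2)} \leq \E V^{(1)} + \E (B^{(1)} - x^{(1)})^+ \leq \E V^{(1)} + \E (B^{(1)})^+ = \E V^{(1)} + c/(1+c^2)$,
which forces $\lim_{c\to\infty}\E V^{(2)} \leq \lim_{c\to\infty}\E V^{(1)}$. (The paper does exactly this by conditioning on the two atoms of $B^{(1)}$.) The reverse inequality is the observation you already make in passing: the walk for $V^{(1)}$ is the walk for $V^{(2)}$ with an extra leading step $B^{(2)} - x^{(2)} \leq 0$ prepended, so $V^{(1)} \dis (V^{(2)} + B^{(2)} - x^{(2)})^+ \leq V^{(2)}$ stochastically, whence $\E V^{(1)} \leq \E V^{(2)}$ for every $c$. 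Together these two displays give the lemma; no excursion decomposition, no drift-based tail bounds, and no order-of-limits gymnastics beyond working directly with the stationary quantities are needed.

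The two abandoned routes should simply be cut. As you correctly observe, any bound through $\E|B^{(1)} - x^{(1)}|$ is doomed because that quantity tends to $\tfrac12\sqrt2$; the asymmetry of the group-$1$ step (almost surely slightly negative, very rarely hugely positive) is precisely why one must isolate the \emph{positive part} $(B^{(1)})^+$, whose expectation $cp = c/(1+c^2)$ vanishes, rather than the absolute step size. Your remark that ``$B^{(1)}_c$'s positive part has $L^1$-norm $cp \to 0$'' is the whole point --- it just needs to be inserted into the one-step Lindley identity above rather than into a walk-level overshoot argument, and it must be paired with the (separately proved, not symmetric) lower bound $\E V^{(1)} \leq \E V^{(2)}$, since the one-step bound only controls the difference in one direction.
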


So we focus on $\lim_{c \to \infty} \E V^{(1)}$.
Note that the group 2 steps, distributed as $B^{(2)}-a$, are never positive, so the maximum of the alternating walk can only be attained after a group 1 step. 
Therefore, we can combine the two alternating steps into one step, and $V^{(1)}$ is the maximum of a random walk $(S^{(1)}_j)_{j \geq 0}$ with steps
$B^{(1)}_j+B^{(2)}_j-(a+\frac12\sqrt2)$. 
(Here, $B^{(i)}_j$ is an independent copy of $B^{(i)}$, for each  {$i\in\{1,2\}$ and $j\in \N$}.)
Observe that $\E V^{(1)}$ is never bigger than $\E W^{(1)}$, the expected maximum of a random walk with steps distributed as $B^{(1)}-\frac12\sqrt2$, as $B^{(2)}-a\leq 0$, so as a preliminary bound we have $\E V^{(1)}\leq \E W^{(1)} = \frac12\sqrt2$.
Recalling that in the \svf{} sequence the group 2 patients experience no waiting time,
we see that our goal is to show that in fact $\E V^{(1)} < \tfrac12 \E W^{(1)} = \tfrac14\sqrt{2}$.

The simplest way to bound $\E V^{(1)}$ is via a bound by \cite{Kingman}, which says that if $(S_i)_{i \geq 0}$ is a random walk with i.i.d.\ steps distributed according to $Z$, where $\E Z < 0$, then
\[
    \E \max_{i \geq 0} S_i \leq -\frac{\Var \; Z}{2\E Z}.
\]
Applied to the random walk $S^{(1)}$, 
\[
    \E V^{(1)} \leq \frac{\Var[B^{(1)} + B^{(2)}]}{2(a + \tfrac12\sqrt{2})} = \frac{1+a^2}{2a + \sqrt{2}}.
\]
This is always larger than $\tfrac12\sqrt{2}$, for any $a \geq 1$, so this  bound {does not} suffice.
Instead,  {let us} consider the random walk $(\tilde S_j)_{j \geq 0}$ with steps $B^{(1)}_i-(a+\frac12\sqrt2)$; let $\tilde{V}$ be its maximum.
Kingman's bound for this random walk yields
\[
    \E \tilde{V} \leq \frac{\Var\; B^{(1)}}{2(a + \tfrac12\sqrt{2})} = \frac{1}{2a + \sqrt{2}}.
\]
As long as we choose $a > \tfrac32\sqrt{2}$, this is below $\tfrac12 \E W^{(1)}$.
All that remains is to show the following lemma.
\begin{lemma}\label{lem:simplerrw}
    $\lim_{c \to \infty} \E V^{(1)} \leq \lim_{c \to \infty} \E \tilde{V}$.
\end{lemma}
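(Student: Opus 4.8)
The plan is to prove the stronger statement that both limits equal $\tfrac{1}{2a+\sqrt2}$; the claimed inequality then follows trivially. The first step is to show that $V^{(1)}\to 0$ and $\tilde V\to 0$ in probability as $c\to\infty$. Call a step of the underlying random walk a \emph{jump} if $B^{(1)}$ takes the value $c$ on that step; every other step equals $-\tfrac1c+B^{(2)}-(a+\tfrac12\sqrt2)$ (for $V^{(1)}$) or $-\tfrac1c-(a+\tfrac12\sqrt2)$ (for $\tilde V$), which is strictly negative. Hence the walk strictly decreases between consecutive jumps, so its running maximum can exceed its starting value $0$ only at a jump epoch $T_k$ (the time of the $k$-th jump, a sum of $k$ i.i.d.\ geometrics with success probability $p=1/(1+c^2)$). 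Bounding in $S^{(1)}_{T_k}$ the drift contribution by $-(a+\tfrac12\sqrt2)T_k$, the $B^{(1)}$-contribution by $kc$, and $\bigl|\sum_{i\le T_k}B^{(2)}_i\bigr|$ by $aT_k$, one finds that $S^{(1)}_{T_k}>0$ forces $0<kc-\tfrac12\sqrt2\,T_k$, i.e.\ $T_k<\sqrt2\,kc$ (and the same bound holds a fortiori for $\tilde S_{T_k}$). Since $\{T_k<\sqrt2\,kc\}$ implies at least $k$ jumps occur among the first $\lceil\sqrt2\,kc\rceil$ trials, a union bound gives, for $c$ large,
\[
\p\bigl(V^{(1)}>0\bigr)\;\le\;\sum_{k\ge1}\p\bigl(T_k<\sqrt2\,kc\bigr)\;\le\;\sum_{k\ge1}\binom{\lceil\sqrt2\,kc\rceil}{k}p^k\;\le\;\sum_{k\ge1}\Bigl(\tfrac{2\sqrt2\,e}{c}\Bigr)^{k}\;\longrightarrow\;0,
\]
and likewise $\p(\tilde V>0)\to0$.

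Next I would apply the stationary Lindley identity. Both $V^{(1)}$ and $\tilde V$ are all-time maxima of negative-drift i.i.d.\ random walks, with step $Y:=B^{(1)}+B^{(2)}-(a+\tfrac12\sqrt2)$ and $\tilde Y:=B^{(1)}-(a+\tfrac12\sqrt2)$ respectively, each of mean $-(a+\tfrac12\sqrt2)$; hence each satisfies the fixed-point relation $W\dis(W'+Y)^+$ with $W'$ an independent copy of $W$, independent of $Y$. Squaring, taking expectations, and using $\E[W^2]<\infty$ together with $(W'+Y)^2=((W'+Y)^+)^2+((W'+Y)^-)^2$ gives
\[
2\bigl(a+\tfrac12\sqrt2\bigr)\,\E[W]\;=\;\E\bigl[Y^2\bigr]-\E\bigl[\bigl((W+Y)^-\bigr)^2\bigr].
\]
The key observation is that $(W+Y)^-$ is nonzero only when $B^{(1)}=-\tfrac1c$ (when $B^{(1)}=c$, one has $W+Y>0$ for $c$ large), and on that event $(W+Y)^-\le a+\tfrac12\sqrt2+\tfrac1c-B^{(2)}\le 2a+\tfrac12\sqrt2+1$, a bound uniform in $c$. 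Since $B^{(1)}\to0$ in probability and (by Step~1) $W\in\{V^{(1)},\tilde V\}\to0$ in probability, dominated convergence (coupling the common $B^{(2)}$) yields $\E[((V^{(1)}+Y)^-)^2]\to\E[((B^{(2)}-(a+\tfrac12\sqrt2))^-)^2]=a^2+(a+\tfrac12\sqrt2)^2$ and $\E[((\tilde V+\tilde Y)^-)^2]\to(a+\tfrac12\sqrt2)^2$.

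Finally, since $\E[Y^2]=1+a^2+(a+\tfrac12\sqrt2)^2$ and $\E[\tilde Y^2]=1+(a+\tfrac12\sqrt2)^2$, the right-hand side of the identity tends to $1$ in both cases, so $\lim_{c\to\infty}\E V^{(1)}=\lim_{c\to\infty}\E\tilde V=\tfrac{1}{2a+\sqrt2}$, which is more than the lemma requires. The main obstacle is Step~1: one must establish that a jump within the first $O(c)$ steps is essentially the \emph{only} way for either walk to rise above its starting level, an event of probability $O(1/c)$. It is this fact that collapses the otherwise Kingman-type estimate — which for $V^{(1)}$ only gives the useless bound $\tfrac{1+a^2}{2a+\sqrt2}$ — down to the exact constant $\tfrac{1}{2a+\sqrt2}$; everything after Step~1 is a routine moment computation and an application of dominated convergence.
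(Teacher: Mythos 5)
Your proof is correct, and it takes a genuinely different route from the paper's. The paper couples the two walks through their shared $B^{(1)}$ increments, truncates to the first $2c$ steps (with a Chebyshev/restart argument to show the tail contributes $O(c^{-1})$), and then uses Hoeffding's inequality on the partial sums of $B^{(2)}$ to show that $M^{(1)}\leq \tilde M+c^{2/3}$ with overwhelming probability; combined with the fact that a jump occurs in the first $2c$ steps only with probability $\approx 2/c$, this gives $\E[M^{(1)}-\tilde M]=O(c^{-1/3})$, hence the stated inequality. You instead identify the \emph{exact} limit of both expectations via the second-moment form of the Lindley fixed point, $2|\E Y|\,\E W=\E[Y^2]-\E[(I)^2]$ with $I=(W+Y)^-$ the stationary idle time: your Step 1 union bound over jump epochs shows $\p(W>0)=O(c^{-1})$ for both walks, which (together with boundedness of $I$ and the vanishing of $I$ on the jump event) pins down $\lim\E[I^2]$, and the variance contribution of $B^{(2)}$ cancels exactly between $\E[Y^2]$ and $\E[I^2]$. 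Your argument buys more than the paper's: it proves the two limits are \emph{equal}, both being $1/(2a+\sqrt2)$, it re-derives the asymptotic tightness of Kingman's bound that the paper cites from Daley for the group-1 analysis, and it would in fact let one weaken the paper's requirement $a>\tfrac32\sqrt2$ to $a>\tfrac12\sqrt2$ and sharpen the resulting lower bound on $r_{1/2}$. The only points worth tightening in a write-up are (i) an explicit remark that $\E[W^2]<\infty$ because the steps are bounded and the drift is negative (so the squared identity is legitimate), and (ii) that "dominated convergence" here really means bounded convergence applied to uniformly bounded random variables converging in probability; both are routine.
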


We sketch the proof here, and relegate the details to  {Appendix~\ref{AsExampleApp}}.
Consider a typical realization of $(\tilde{S}_j)_{j \geq 0}$.
It consists primarily of downward steps of size $a+ \tfrac12\sqrt{2} + 1/c \approx a + \tfrac12\sqrt{2}$, with occasional large upwards jumps of size $c$.
These upwards jumps, occurring with probability $p=1/(1+c^2)\approx 1/c^2$ are very rare.
One can prove that with overwhelming probability, the maximum of $S^{(1)}$ occurs within the first $2c$ steps;
condition on this for the remainder.
If there are no upwards jumps in the first $2c$ steps, then $\tilde{V}$ and $V^{(1)}$ are both zero; this happens with probability $(1-p)^{2c} \approx 1 - 2pc \approx 1 - 2/c$.
Independently of whether there are any such upwards jumps, the difference between the random walks 
for $\tilde{V}$ and for $V^{(1)}$ after $j$ steps is $\sum_{i=1}^j B^{(2)}_i$.
By an appropriate concentration bound, we can deduce that $S^{(1)}_j \leq \tilde{S}_j + c^{2/3}$ for all $j \leq 2c$, with extremely high probability.
So with probability about $1-2/c$, $V^{(1)} = \tilde{V} = 0$, and with probability about $2/c$, $V^{(1)} \leq \tilde{V} + c^{2/3}$.
This implies that $\E V^{(1)} \leq \E \tilde{V} + c^{-1/3}$, yielding the lemma.

\medskip

With the optimized choice $a=\tfrac12\sqrt{2}+\sqrt{3}$, a quantitative lower bound of 
$r_{1/2}\geq \frac12+\frac14\sqrt6 > 1.11$ can be obtained.


\section{Discussion and directions for further research}

We have shown that under quite general conditions, the \svf{} sequence yields a constant-factor approximation.
Furthermore, we have seen that additional information about the instance, such as knowing that the service-time distributions fall within a certain class, or that the number of patients is large, can lead to substantial improvements of our worst-case bounds.

For mean-based schedules, Example~\ref{BadExampleMB} and Theorem~\ref{nonsymmComp} show that the worst-case approximation ratio lies between 1.52 and 4; for symmetric service-time distributions we found the upper bound 2.
It would be interesting to reduce the gap between the lower and upper bound; we suspect neither bound is tight. 
In particular, the upper bound on the cost of the \svf{} sequence appears to be a strong bound only in the regime of many patients, with service times of similar variances; the lower bound on the cost of arbitrary sequences, on the other hand, appear strong in situations where only a few service times with large variance have a significant impact on the cost function. 
This suggests that more refined arguments, possibly considering multiple regimes, could lead to an improved upper bound.
Improving our bounds for special cases (such as normal and lognormal distributions), or considering other practically relevant service-time distributions, would also be of interest.

When optimizing over both the sequence and the schedule, we obtained bounds for location-scale families and for lognormally-distributed service times.
These bounds obtained are not uniform: in the former case, the bounds depend on $\omega$ and the location-scale family, and in the latter case, on the parameters of the lognormal distributions.
A constant-factor approximation that does not depend on these quantities, or that holds in greater generality (e.g., to all distributions satisfying the dilation ordering assumption), remains an open question.
The \svf{} sequencing rule remains a promising candidate, but a more sophisticated choice of scheduling rule will certainly be needed.

We found that the \svf{} sequence is asymptotically optimal as the number of patients tends to infinity under a mild Lyapunov-type assumption, but that this is no longer the case when optimally-spaced schedules are used, as shown in Section~\ref{AsymptoticExample}. Such an asymptotic optimality for optimally-spaced schedules might still hold under more strict assumptions on the service times. Finding what these assumptions should be and proving asymptotic optimality under these assumptions is an interesting open problem.

To the best of our knowledge, this is the first paper that assesses whether an easily computed sequence performs provably well, rather than trying to find the optimal sequence for a special (typically low-dimensional) instance, or comparing heuristics through simulation. Finding the optimal sequence is an important (but  inherently difficult) problem, and we hope our approach triggers more research in this direction.


\appendix

\section{Proofs corresponding to Section~\ref{sec:bounds}}\label{App3}

Here we prove Lemma~\ref{UB} and Lemma~\ref{LB}, that are used in Section~\ref{sec:bounds}.

\begin{customlemma}{\ref{UB}}
Under Assumption \ref{ASymm}, 
\begin{equation*}
\E W_{k+1} \leq  \E\left(\stp{\tau(1)} + \cdots + \stp{\tau(k)} \right)^+
                  \;+\; \E\left(\stp{\tau(1)} + \cdots + \stp{\tau(k-1)}\right)^+.
\end{equation*}
\end{customlemma}

\begin{proof}
By Lemma~\ref{reflectedUB}, $W_k$ is stochastically dominated by $|X_{\tau(1)}+\dots+X_{\tau(k-1)}|$. As a consequence,
\begin{equation}\label{improveUB}
    \E W_{k+1} = \E \left(W_k+\stp{\tau(k)}\right)^+
               \leq  \E\left(|\stp{\tau(1)} + \cdots + \stp{\tau(k-1)}|+\stp{\tau(k)}\right)^+.
\end{equation}

If $Y$ and $Z$ are independent and both have symmetric distributions, then
\begin{equation*}
\E(|Y|+Z)^+ = \E (Y+Z)^++\E Y^+.
\end{equation*}
This is easily checked by conditioning on $|Y|=a$ and $|Z|=b$, as then $Y$ is either $a$ or $-a$ with probability $\frac12$, and similarly for $Z$. Applying this result to the upper bound in (\ref{improveUB}), we find the upper bound in the lemma.
\end{proof}

\begin{customlemma}{\ref{LB}}
Under Assumption \ref{ASymm}, for any $\ell$,
\begin{align*}
\E W_{k+1} \geq  \tfrac12\bigg(\E\left(\stp{\tau(1)} + \cdots + \stp{\tau(k)}\right)^+ 
+ \E\left(\stp{\tau(1)} + \cdots + \stp{\tau(\ell)}\right)^+ \\
+\E\left(\stp{\tau(\ell+1)} + \cdots + \stp{\tau(k)}\right)^+\bigg).
\end{align*}
\end{customlemma}

\begin{proof}
Let $S_{\ell}'= \stp{\tau(1)}+\stp{\tau(2)} + \cdots + \stp{\tau(\ell)}$, so that $S_{\ell}'= S_k-S_{k-\ell}$. 
As $W_{k+1}=\max\{0,S_1,\dots,S_k\}$, we then have
\begin{equation}\label{improveLB}
W_{k+1} \geq  \max\{0,S_{k-\ell},S_k\}=\max\{0,S_{k-\ell},S_{k-\ell}+S_{\ell}'\}=(S_{k-\ell}+(S_{\ell}')^+)^+.
\end{equation}

If $Y$ and $Z$ are independent and both have symmetric distributions, then
\begin{equation*}
\E(Y+Z^+)^+ = \tfrac12\left(\E (Y+Z)^++\E Y^++\E Z^+\right).
\end{equation*}
Again, this is easily checked by conditioning on $|Y|=a$ and $|Z|=b$, as then $Y$ is either $a$ or $-a$ with probability $\frac12$, and similarly for $Z$. Applying this result to the lower bound in (\ref{improveLB}), we find the lower bound in the lemma.
\end{proof}


\section{Proofs corresponding to Section~\ref{sec:asymptotic}}\label{App5}

In this appendix we prove Proposition~\ref{UB2}, Proposition~\ref{LB2}, and hence deduce Theorem~\ref{!}, building on the ideas presented in Section~\ref{sec:asymptotic}. 

\begin{proofof}{Proposition \ref{UB2}.}
For ease of notation, we assume throughout this proof that patients are renumbered so that $\tau(i) = i$.
Note that
$
\Sigma_{k}^2 := \sum_{i=1}^k \sigma_{n,i}^2
$
is the variance of both $S_{k}$ and $\tilde{S}_{k}(a)$.
In order to apply Theorem~\ref{CLT} we scale all steps, and hence $S_{k}$ and $\tilde{S}_{k}(a)$, by a factor $1/\Sigma_{k}$. 
For both martingales the squared increments $(\stpn{k-i+1})^2$ are independent of the previous increments, so that after rescaling the conditional variance after $k$ steps equals one for both martingales. Note that we can recognize the $q_k$ from Assumption~\ref{assumption} in the upper bound, so we find for any $x$ that
\begin{align}\label{CLT1}
&\Bigg|\,\p\left(\frac{S_{k}}{\Sigma_{k}}>x\right)-\p(Z>x)\,\Bigg|\,\leq  C_\delta q_k^{1/(3+\delta)},\\
\label{CLT2}
&\Bigg|\,\p\left(\frac{\tilde{S}_{k}(a)}{\Sigma_{k}}>x\right)-\p(Z>x)\,\Bigg|\,\leq  C_\delta q_k^{1/(3+\delta)}.
\end{align}

Using inequality \eqref{CLT1} and Chebyshev's inequality, we find
\begin{align*}
\frac{1}{\Sigma_{k}}\int_0^\infty \p(S_{k}>a)\mathrm{d}a &= \int_0^\infty \p\left(\frac{S_{k}}{\Sigma_{k}}>x\right)\mathrm{d}x \\
&\leq   \int_0^{1/\sqrt{q_k}^{1/(3+\delta)}} \left(\p(Z>x)+ C_\delta q_k^{1/(3+\delta)}\right)\mathrm{d}x+\int_{1/\sqrt{q_k}^{1/(3+\delta)}}^\infty  \frac{1}{x^2}\mathrm{d}x \\
&\leq  \E Z^+ +(C_\delta+1)\sqrt{q_k}^{1/(3+\delta)}.
\end{align*}
A similar reasoning using \eqref{CLT2} finds the same upper bound for
\begin{align*}
\frac{1}{\Sigma_{k}}\int_0^\infty \p(\tilde{S}_{k}(a)>a)\,\mathrm{d}a.
\end{align*}
Finally,
\begin{align*}
    \E W_{n,k+1} &\leq  \int_0^\infty \left(\p(S_{k}\geq  a)+\p(\tilde{S}_{k}(a)>a)\right)\mathrm{d}a 
		= \int_0^\infty \left(\p(S_{k}> a)+\p(\tilde{S}_{k}(a)>a)\right)\mathrm{d}a\\
                 &\leq 2\E Z^+ + 2(C_{\delta}+1)q_k^{(3+\delta)/2}.
\end{align*}
\end{proofof}

\begin{proofof}{Proposition \ref{LB2}.}
    We will first lay out the technical groundwork that we need for this proof.

    Fix any $n$ and $k \leq n$ (later we will take limits).
    As in the proof above, assume that $\tau(i) = i$.
Recall from Section~\ref{sec:asymptotic} that $(S'_j)_{j \leq k}$ is the random walk with steps $\stpn{i}\mathbbm{1}_{\stpn{i}\leq  c_{n,k}}$ instead of $\stpn{i}$;
and $\hat{S}_{k}'(a)$, $\tilde{S}_{k}'(a)$ are defined based on this random walk in the same way that
$\hat{S}_{k}(a)$ and $\tilde{S}_{k}(a)$ were defined based on $(S_j)$.
For now, $c_{n,k}$ can take any positive value.
We thus have
\begin{align*}
\p(W_{n,k+1}'\geq  a)\geq  \p(S_{k}'>a)+\p(\hat{S}_{k}'(a)>a)
\geq  \p(S_{k}'>a)+\p(\tilde{S}_{k}'(a)>a+2c_{n,k}).
\end{align*}

Note that the steps no longer have mean zero, and so $\tilde{S}_{j}'(a)$ is no longer a martingale. 
To repair this issue, we must know how much the change in steps due to the indicator affects the mean and variance of all the steps. 
For this we have the following lemma. Define $\Sigma_{n,k}^2 := \Var \, S_k$, $(\Sigma_{n,k}')^2:=\Var\, S_{k}'$ and
$\gamma_{n,k}:=\left(\Sigma_{n,k} / c_{n,k}\right)^\delta$.

\begin{lemma}\label{enumLemma}
    ~\smallskip
\begin{enumerate}[{\normalfont (1)}]
\setlength\itemsep{1em}
\item $\displaystyle\begin{aligned}
        \frac{\sum_{i=1}^k \E\left[(\stpn{i})\mathbbm{1}_{\stpn{i}>c_{n,k}}\right]}{\Sigma_{n,k}}\leq  q_k \gamma_{n,k}^{(1+\delta)/\delta} \, ; \; \text{and} 
\end{aligned}$ \label{meanApp}
\item $\displaystyle\begin{aligned}
	\frac{(\Sigma_{n,k}')^2}{\Sigma_{n,k}^2}\geq  1-2q_k\gamma_{n,k}\,.
\end{aligned}$ \label{VarSApp}
\end{enumerate}
\end{lemma}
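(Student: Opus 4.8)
The plan is to treat both inequalities as routine truncation estimates, driven by the two pointwise bounds $t\leq t^{2+\delta}/c^{1+\delta}$ and $t^2\leq t^{2+\delta}/c^\delta$, which hold whenever $t>c>0$ (write $t^{2+\delta}=t\cdot t^{1+\delta}=t^2\cdot t^\delta$ and use $t^{1+\delta}>c^{1+\delta}$, resp.\ $t^\delta>c^\delta$). Throughout I would invoke Assumption~\ref{assumption} with $\tau=\Id$, which supplies the single inequality $\sum_{i=1}^k\E|\stpn{i}|^{2+\delta}\leq q_k\,\Sigma_{n,k}^{2+\delta}$; this is the only non-elementary input, and everything else is bookkeeping with powers of $\gamma_{n,k}=(\Sigma_{n,k}/c_{n,k})^\delta$, keeping in mind that $\gamma_{n,k}^{1/\delta}=\Sigma_{n,k}/c_{n,k}$.

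For part (1), I would note that on $\{\stpn{i}>c_{n,k}\}$ the first pointwise bound gives $\stpn{i}\leq|\stpn{i}|^{2+\delta}/c_{n,k}^{1+\delta}$, while off that event both sides vanish, so $\E[\stpn{i}\mathbbm{1}_{\stpn{i}>c_{n,k}}]\leq\E|\stpn{i}|^{2+\delta}/c_{n,k}^{1+\delta}$. Summing over $i\leq k$, applying the bound on $\sum_i\E|\stpn{i}|^{2+\delta}$, and dividing by $\Sigma_{n,k}$ leaves
\[
\frac{1}{\Sigma_{n,k}}\sum_{i=1}^k\E\bigl[\stpn{i}\mathbbm{1}_{\stpn{i}>c_{n,k}}\bigr]\leq\frac{q_k\,\Sigma_{n,k}^{1+\delta}}{c_{n,k}^{1+\delta}}=q_k\Bigl(\frac{\Sigma_{n,k}}{c_{n,k}}\Bigr)^{1+\delta}=q_k\,\gamma_{n,k}^{(1+\delta)/\delta}.
\]

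For part (2), I would split $\stpn{i}=Y_i+Z_i$ with $Y_i:=\stpn{i}\mathbbm{1}_{\stpn{i}\leq c_{n,k}}$ and $Z_i:=\stpn{i}\mathbbm{1}_{\stpn{i}>c_{n,k}}$. These have disjoint supports, so $Y_iZ_i=0$ and $Y_i^2+Z_i^2=(\stpn{i})^2$; since the truncated steps stay independent, $(\Sigma_{n,k}')^2=\sum_{i=1}^k\Var Y_i$. Using $\sdev{i}{n}^2=\E(\stpn{i})^2=\E Y_i^2+\E Z_i^2$ and $\E Y_i=-\E Z_i$ (as $\E\stpn{i}=0$), one gets
\[
\sdev{i}{n}^2-\Var Y_i=\E Z_i^2+(\E Y_i)^2=\E Z_i^2+(\E Z_i)^2\leq 2\,\E Z_i^2,
\]
and the second pointwise bound gives $\E Z_i^2=\E[(\stpn{i})^2\mathbbm{1}_{\stpn{i}>c_{n,k}}]\leq\E|\stpn{i}|^{2+\delta}/c_{n,k}^\delta$. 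Summing over $i$ and using the $q_k$ inequality again yields $\Sigma_{n,k}^2-(\Sigma_{n,k}')^2\leq 2q_k\,\Sigma_{n,k}^{2+\delta}/c_{n,k}^\delta=2q_k\gamma_{n,k}\,\Sigma_{n,k}^2$, which rearranges to the stated bound.

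There is no real obstacle here; the only point that needs a little care is in part (2), where one should resist bounding $\E Y_i^2$ and $(\E Y_i)^2$ separately (that loses the constant) and instead use the disjoint-support identity $\sdev{i}{n}^2=\E Y_i^2+\E Z_i^2$ together with $\E Y_i=-\E Z_i$ to collapse everything onto $\E Z_i^2$. The rest is just making sure the exponents of $\Sigma_{n,k}/c_{n,k}$ track correctly, so that part (1) produces $\gamma_{n,k}^{(1+\delta)/\delta}$ and part (2) produces $\gamma_{n,k}$ exactly.
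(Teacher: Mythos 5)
Your proposal is correct and follows essentially the same route as the paper: both parts rest on the pointwise truncation bounds $t\leq t^{2+\delta}/c^{1+\delta}$ and $t^2\leq t^{2+\delta}/c^{\delta}$ for $t>c>0$ together with the defining inequality $\sum_{i=1}^k\E|\stpn{i}|^{2+\delta}\leq q_k\Sigma_{n,k}^{2+\delta}$ from Assumption~\ref{assumption}, and your part (2) — writing $\sdev{i}{n}^2-\Var Y_i=\E Z_i^2+(\E Z_i)^2\leq 2\E Z_i^2$ via $\E Y_i=-\E Z_i$ and Jensen — is exactly the paper's computation in a slightly tidier form. No gaps.
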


\begin{proof}
	\ref{meanApp} Using that $\stpn{i} > c_{n,k}$ implies $|\stpn{i}|^{1+\delta}/c_{n,k}^{1+\delta}>1$, we find
\begin{align*}
    \E\left[(\stpn{i})\mathbbm{1}_{\stpn{i}>c_{n,k}}\right] \leq  \E\left[\left(|\stpn{i}|^{2+\delta}/c_{n,k}^{1+\delta}\right)\mathbbm{1}_{\stpn{i}>c_{n,k}}\right] 
\leq  \E|\stpn{i}|^{2+\delta}/c_{n,k}^{1+\delta}.
\end{align*}
Summing over $i$ and dividing both sides by $\Sigma_{n,k}$ we find
\begin{equation*}
    \frac{\sum_{i=1}^k \E\left[(\stpn{i})\mathbbm{1}_{\stpn{i}>c_{n,k}}\right]}{\Sigma_{n,k}}\leq  \frac{\sum_{i=1}^n \E|\stpn{i}|^{2+\delta}}{\Sigma_{n,k} c_{n,k}^{1+\delta}}
= \frac{\sum_{i=1}^n \E|\stpn{i}|^{2+\delta}}{\Sigma_{n,k}^{2+\delta}}\left(\frac{\Sigma_{n,k}}{c_{n,k}}\right)^{1+\delta} \leq  q_k\gamma_{n,k}^{(1+\delta)/\delta},
\end{equation*}
as was claimed.

   \ref{VarSApp} Analogous to the proof of part~(\ref{meanApp}), we can also deduce that
\begin{align}\label{Var}
\frac{\sum_{i=1}^k \E\left[\stpn{i}^2\mathbbm{1}_{\stpn{i}>c_{n,k}}\right]}{\Sigma_{n,k}^2}\leq  q_k \gamma_{n,k}.
\end{align}
Using that $\E\stpn{i}=0$, we find
\begin{align*}
\frac{(\Sigma_{n,k}')^2}{\Sigma_{n,k}^2}&=\frac{\sum_{i=1}^n \E\left[\stpn{i}^2\mathbbm{1}_{\stpn{i}\leq  c_{n,k}}\right]}{\Sigma_{n,k}^2}
-\frac{\sum_{i=1}^n \left(\E\left[\stpn{i}\mathbbm{1}_{\stpn{i}\leq  c_{n,k}}\right]\right)^2}{\Sigma_{n,k}^2} \\
&= \frac{\Sigma_{n,k}^2-\sum_{i=1}^n \E\left[\stpn{i}^2\mathbbm{1}_{\stpn{i}> c_{n,k}}\right]}{\Sigma_{n,k}^2} 
- \frac{\sum_{i=1}^n \left(\E\left[\stpn{i}\mathbbm{1}_{\stpn{i}> c_{n,k}}\right]\right)^2}{\Sigma_{n,k}^2}.
\end{align*}
Now applying Jensen's inequality to the last term and (\ref{Var}),
\begin{align*}
    \frac{(\Sigma_{n,k}')^2}{\Sigma_{n,k}^2}\geq  \frac{\Sigma_{n,k}^2-2\sum_{i=1}^n \E\left[\stpn{i}^2\mathbbm{1}_{\stpn{i}> c_{n,k}}\right]}{\Sigma_{n,k}^2} \geq  1-2q_k\gamma_{n,k},
\end{align*}
which is what we wanted to prove.
\end{proof}

Now we have the following result. Define, 
with $Z$ a standard normal random variable and 
$C_\delta$ the constant featuring in Theorem~\ref{CLT},
\begin{align*}
D_k:=\int_0^{1/\sqrt{q_k}^{1/(3+\delta)}} \p(Z>x)\mathrm{d}x-C_\delta\sqrt{q_k}^{1/(3+\delta)}.
\end{align*}

\begin{lemma}\label{LB1}
    We have
\begin{align*}
\frac{\E W_{n,k+1}}{\Sigma_{n,k}} \geq 2D_k\sqrt{1- 2q_k \gamma_{n,k}} -\frac{2c_{n,k}}{\Sigma_{n,k}}-2 q_k \gamma_{n,k}^{(1+\delta)/\delta}.
\end{align*}
\end{lemma}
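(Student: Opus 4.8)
The plan is to combine the lower bound from the reflection-principle inequality with the martingale CLT (Theorem~\ref{CLT}), applied to the \emph{truncated} walk $(S'_j)$ and its reflection $(\tilde S'_j(a))$, taking care of the fact that truncation destroys the zero-mean (hence martingale) property, which is exactly what Lemma~\ref{enumLemma} was set up to control. First, I would write, as already recorded just before the lemma,
\begin{align*}
\E W_{n,k+1}\geq \E W'_{n,k+1}\geq \int_0^\infty\Bigl(\p(S'_k>a)+\p\bigl(\tilde S'_k(a)>a+2c_{n,k}\bigr)\Bigr)\,\mathrm{d}a.
\end{align*}
The second integral, after the substitution $a\mapsto a-2c_{n,k}$ and discarding the (nonnegative) part of the range below $2c_{n,k}$, is at least $\int_0^\infty\p(\tilde S'_k(a+2c_{n,k})>a+2c_{n,k})\,\mathrm{d}a$, and I will lose an additive $2c_{n,k}$ from this shift; that accounts for the $-2c_{n,k}/\Sigma_{n,k}$ term after dividing by $\Sigma_{n,k}$.

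Next, I would rescale both walks by $1/\Sigma'_{n,k}$ so that the conditional variance after $k$ steps is exactly one, and apply Theorem~\ref{CLT} to each. For $(S'_j)$ this is a genuine martingale after subtracting its (small) drift; for $(\tilde S'_j(a))$ one checks, as flagged in the main text, that it is \emph{almost} a martingale and the CLT still applies with the same $C_\delta$, the $(2+\delta)$-th moment sum being bounded by that of the untruncated steps and hence by $q_k$ times $\Sigma_{n,k}^{2+\delta}$. Integrating the resulting uniform estimate $|\p(\cdot/\Sigma'_{n,k}>x)-\p(Z>x)|\le C_\delta q_k^{1/(3+\delta)}$ over $x\in[0,1/\sqrt{q_k}^{1/(3+\delta)}]$ and simply dropping the tail contribution (it is nonnegative) produces, for each of the two probabilities,
\begin{align*}
\frac{1}{\Sigma'_{n,k}}\int_0^\infty \p(S'_k>a)\,\mathrm{d}a\;\geq\; \int_0^{1/\sqrt{q_k}^{1/(3+\delta)}}\p(Z>x)\,\mathrm{d}x - C_\delta\sqrt{q_k}^{1/(3+\delta)} \;=\; D_k,
\end{align*}
and likewise for $\tilde S'_k$. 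Summing the two contributions gives a factor $2$, so $\E W_{n,k+1}\ge 2D_k\,\Sigma'_{n,k} - 2c_{n,k}$, using the shift correction above.

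Finally, I would convert $\Sigma'_{n,k}$ back to $\Sigma_{n,k}$ using Lemma~\ref{enumLemma}\ref{VarSApp}: $\Sigma'_{n,k}\ge \Sigma_{n,k}\sqrt{1-2q_k\gamma_{n,k}}$, and absorb the drift of $(S'_j)$ — whose magnitude is $\bigl|\sum_i \E[\stpn{i}\mathbbm{1}_{\stpn{i}>c_{n,k}}]\bigr|$, bounded via Lemma~\ref{enumLemma}\ref{meanApp} by $\Sigma_{n,k}\,q_k\gamma_{n,k}^{(1+\delta)/\delta}$ — into the final estimate; this recentering costs exactly the $-2q_k\gamma_{n,k}^{(1+\delta)/\delta}$ term (the factor $2$ coming again from the two walks, or alternatively a crude bound). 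Dividing through by $\Sigma_{n,k}$ yields precisely
\begin{align*}
\frac{\E W_{n,k+1}}{\Sigma_{n,k}}\;\geq\; 2D_k\sqrt{1-2q_k\gamma_{n,k}}-\frac{2c_{n,k}}{\Sigma_{n,k}}-2q_k\gamma_{n,k}^{(1+\delta)/\delta}.
\end{align*}
The main obstacle I anticipate is the careful bookkeeping around the reflected truncated walk $(\tilde S'_j(a))$: justifying that Theorem~\ref{CLT} applies despite it not being an exact martingale (the overshoot step has a slightly different conditional law), verifying that its rescaled conditional variance is one and its moment sum is still controlled by $q_k$, and making sure the drift correction and the variance-ratio correction are applied consistently to \emph{both} walks so the constants $2$ come out right. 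Everything else is routine integration of the CLT estimate plus the already-proved Lemma~\ref{enumLemma}.
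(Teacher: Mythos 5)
Your outline follows the paper's proof almost step for step: truncate, reflect, apply the martingale CLT to both walks after rescaling by $\Sigma'_{n,k}$, integrate the uniform estimate over $[0,1/\sqrt{q_k}^{1/(3+\delta)}]$ to extract $D_k$ from each, and then convert back to $\Sigma_{n,k}$ via Lemma~\ref{enumLemma}\ref{VarSApp} and absorb the truncation drift via Lemma~\ref{enumLemma}\ref{meanApp}. The bookkeeping of the three error terms is exactly as in the paper (the drift correction $\E S'_k/\Sigma'_{n,k}$ appears once in each of the two integrals, which is where the factor $2$ on $q_k\gamma_{n,k}^{(1+\delta)/\delta}$ comes from).

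The one place where you stop short is precisely the step you flag as the ``main obstacle'': justifying the CLT for the reflected truncated walk. Saying that $(\tilde S'_j(a))$ is ``almost a martingale and the CLT still applies with the same $C_\delta$'' is not an argument --- Theorem~\ref{CLT} is stated for exact martingale difference sequences with conditional variance exactly one, and the truncated reflected walk satisfies neither hypothesis. The paper's resolution is a specific device worth knowing: rather than working with $\tilde S'_j(a)$ directly, one bounds it from below by $\tilde S_k(a)-S_k+S'_k$ (the reflected \emph{untruncated} walk minus the total amount removed by truncation, which is $\sum_{i}\stpn{i}\mathbbm{1}_{\stpn{i}>c_{n,k}}\geq 0$), and then observes that $\tilde S_j(a)-S_j+S'_j-\E S'_j$ \emph{is} an exact zero-mean martingale --- the reflection contributes increments $0$ or $-2\stpn{i}$ depending on the past, and the per-step drift introduced by truncation is the same before and after the reflection time, so a single deterministic recentering by $\E S'_j$ fixes everything at once. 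Theorem~\ref{CLT} is then applied to this auxiliary martingale, with the overshoot $2c_{n,k}$ and the shift $\E S'_k$ accounted for afterwards. Without this (or an equivalent) construction, the central step of your proof is not justified; with it, your argument coincides with the paper's.
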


\begin{proof}
Again we want to apply Theorem~\ref{CLT}. This time we not only need to divide by the standard deviation $\Sigma_{n,k}'$ of $S_{n,k}'$, but also subtract the (negative) mean. We then find
\begin{align*}
&\frac{1}{\Sigma_{n,k}'}\int_0^\infty \p(S_{k}'>a)\,\mathrm{d}a =\int_{-\E S_{k}'/\Sigma_{n,k}'}^\infty \p\left(\frac{S_{k}'-\E S_{k}'}{\Sigma_{n,k}'}>x\right)\mathrm{d}x \\
&\geq  \int_0^{1/\sqrt{q_k}^{1/(3+\delta)}} \left(\p(Z>x)-C_\delta q_k^{1/(3+\delta)}\right)\mathrm{d}x 
-\int_0^{-\E S_{k}'/\Sigma_{n,k}'} \p\left(\frac{S_{k}'-\E S_{k}'}{\Sigma_{n,k}'}>x\right)\mathrm{d}x \\
&\geq  D_k+\frac{\E S_{k}'}{\Sigma_{n,k}'},
\end{align*}
where we used in the last step that probabilities are bounded by one.

Next, we need a bound for
\begin{align}\label{BBB}
\frac{1}{\Sigma_{n,k}'}\int_0^\infty \p(\tilde{S}_{k}'(a)>a+2c_{n,k})\mathrm{d}a.
\end{align}
Note that $\tilde{S}_{j}'(a)$ is no longer a martingale, as the mean step size is no longer zero. However, we can remedy this by noting that $\tilde{S}_{k}'(a)$ is bounded below by
\begin{align*}
    \tilde{S}_{k}(a)-\sum_{i=1}^k (\stpn{i})\mathbbm{1}_{\stpn{i}>c_{n,k}}=\tilde{S}_{k}(a)-S_{k}+S_{k}',
\end{align*}
and $\tilde{S}_{j}(a)-S_{j}+S_{j}'-\E S_{j}'$ is again a martingale with mean zero. Applying Theorem~\ref{CLT} to this martingale and taking into account the shift $\E S_{k}'$ and overshoot $2c_{n,k}$, we find   that \eqref{BBB} is bounded below by
\begin{align*}
D_k +\frac{\E S_{k}'}{\Sigma_{n,k}'}-\frac{2c_{n,k}}{\Sigma_{n,k}'}.
\end{align*}
Now adding up these two lower bounds, we find
\begin{align*}
\frac{\E W_{n,k+1}'}{\Sigma_{n,k}'} \geq 2D_k+\frac{2\,\E S_{k}'}{\Sigma_{n,k}'}  -\frac{2c_{n,k}}{\Sigma_{n,k}'}.
\end{align*}
Now note that, due to  Lemma~\ref{enumLemma} \ref{VarSApp},
${\Sigma_{n,k}'}/{\Sigma_{n,k}}\geq  \sqrt{1- 2q_k \gamma_{n,k}}.$
In addition, $\E W_{n,k+1}\geq  \E W_{n,k+1}'$. Consequently,
\begin{align*}
\frac{\E W_{n,k+1}}{\Sigma_{n,k}} \geq  2D_k\sqrt{1- 2q_k \gamma_{n,k}}
+\frac{2\,\E S_{k}'}{\Sigma_{n,k}} -\frac{2c_{n,k}}{\Sigma_{n,k}} .
\end{align*}
Applying Lemma~\ref{enumLemma} \ref{meanApp},
\begin{align*}
\frac{\E W_{n,k+1}}{\Sigma_{n,k}} \geq  2D_k\sqrt{1- 2q_k \gamma_{n,k}}-2 q_k \gamma_{n,k}^{(1+\delta)/\delta}-\frac{2c_{n,k}}{\Sigma_{n,k}},
\end{align*}
which is the bound we wanted to prove.
\end{proof}

    We are now finally ready to proceed with the proof of Proposition~\ref{LB2}.
    The freedom remains to choose $c_{n,k}$, which we have only assumed to be positive.
We would like to have $c_{n,k}/\Sigma_{n,k}\to 0$,  $q_k\gamma_{n,k}^{(1+\delta)/\delta}\to 0$ and $q_k\gamma_{n,k}\to 0$ as $k\to\infty$. A choice that achieves this goal is
\begin{align*}
c_{n,k}:= \sqrt{q_k}^{1/(\delta+1)}\Sigma_{n,k},
\end{align*}
so that
$
\gamma_{n,k}=q_k^{-\delta/(2\delta+2)}. $
We thus obtain, with $\bar q_k:=\sqrt{q_k}^{1/(\delta+1)}+\sqrt{q_k}$,
\begin{align*}
\frac{\E W_{n,k+1}}{\Sigma_{n,k}} \geq  
2\left(\int_0^{1/\sqrt{q_k}^{1/(3+\delta)}} \p(Z>x)\mathrm{d}x-C_\delta\sqrt{q_k}^{1/(3+\delta)}\right)\sqrt{1-2\sqrt{q_k}^{2-\delta/(\delta+1)}}-2\bar q_k.
\end{align*}
Note that this converges to $\E|Z|$ as $q_k\to0$, so this completes the proof of Proposition~\ref{LB2}.
\end{proofof}

\begin{proofof}{Theorem~\ref{!}.}
    We first state an immediate consequence of combining Proposition~\ref{UB2} and Proposition~\ref{LB2}.
We use $\Wsvf_{n,i}$ and $\Isvf_{n,i}$ to refer specifically to the waiting and idle times for the \svf{} sequence, and $\Wopt_{n,i}$ and $\Iopt_{n,i}$ for the optimal sequence.

\begin{lemma}\label{enkel}
Under Assumption $\ref{assumption}$,   
for any $\varepsilon>0$ there exists a $K$ depending on $\varepsilon$ only, such that for all $k\geq  K$ and for all $n\geq  k-1$,
\begin{align*}
\E \Wsvf_{n,k} \leq  (1+\varepsilon)\E \Wopt_{n,k}.
\end{align*}
\end{lemma}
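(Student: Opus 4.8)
The plan is to read off the lemma directly from Proposition~\ref{UB2} and Proposition~\ref{LB2}, after a harmless index shift and one elementary observation about the \svf{} ordering. Since $\Wsvf_{n,1}=\Wopt_{n,1}=0$, the statement is trivial for $k=1$, so fix $k\geq 2$. Renumbering the patients so that the \svf{} sequence is the identity, both $\Wsvf_{n,k}$ and $\Wopt_{n,k}$ are waiting times in slot $k=(k-1)+1$, determined by the $k-1$ patients placed in the first $k-1$ slots; hence Proposition~\ref{UB2} and Proposition~\ref{LB2} apply with $k-1$ in place of $k$, and their hypothesis (the total index not exceeding the number of patients) becomes exactly the condition $n\geq k-1$ of the lemma.

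First I would record the monotonicity fact: for every permutation $\tau$ and every $m$, the partial sum of slot-variances satisfies $\sum_{j=1}^{m}\sdev{\tau(j)}{n}^2\geq\sum_{j=1}^{m}\sdev{j}{n}^2$, because the \svf{} sequence assigns the $m$ smallest-variance patients to the first $m$ slots. Applying Proposition~\ref{UB2} to the \svf{} sequence,
\begin{align*}
    \E\Wsvf_{n,k}\leq\Bigl(\E|Z|+2(C_\delta+1)\sqrt{q_{k-1}}^{1/(3+\delta)}\Bigr)\sqrt{\sum_{j=1}^{k-1}\sdev{j}{n}^2},
\end{align*}
and applying Proposition~\ref{LB2} with a parameter $\varepsilon_0>0$ (and its associated threshold $K_0=K_0(\varepsilon_0)$) to the optimal sequence $\tau^{\mathrm{opt}}$, followed by the monotonicity fact, gives for all $k\geq K_0+1$ and all $n\geq k-1$
\begin{align*}
    \E\Wopt_{n,k}\geq(1-\varepsilon_0)\,\E|Z|\sqrt{\sum_{j=1}^{k-1}\sdev{\tau^{\mathrm{opt}}(j)}{n}^2}\;\geq\;(1-\varepsilon_0)\,\E|Z|\sqrt{\sum_{j=1}^{k-1}\sdev{j}{n}^2}.
\end{align*}

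We may assume $\sum_{j=1}^{k-1}\sdev{j}{n}^2>0$, since otherwise the $k-1$ smallest-variance patients are all deterministic, whence $\Wsvf_{n,k}=0$ almost surely and the claimed inequality is trivial. Dividing the two displays then cancels the common factor and leaves
\begin{align*}
    \frac{\E\Wsvf_{n,k}}{\E\Wopt_{n,k}}\leq\frac{\E|Z|+2(C_\delta+1)\sqrt{q_{k-1}}^{1/(3+\delta)}}{(1-\varepsilon_0)\,\E|Z|}.
\end{align*}
By Assumption~\ref{assumption}, $q_{k-1}\to 0$ as $k\to\infty$, and this holds uniformly in $n$ and $\tau$ since the supremum over those is already built into $q_{k-1}$. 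Given $\varepsilon>0$, I would first choose $\varepsilon_0$ small enough that $(1+\varepsilon_0)/(1-\varepsilon_0)\leq 1+\varepsilon$, and then take $K\geq K_0(\varepsilon_0)+1$ large enough that $2(C_\delta+1)\sqrt{q_{k-1}}^{1/(3+\delta)}\leq\varepsilon_0\,\E|Z|$ for every $k\geq K$ (recall $\E|Z|=\sqrt{2/\pi}>0$). For such $k$ and any $n\geq k-1$ the right-hand side above is at most $1+\varepsilon$, which is the assertion.

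With Propositions~\ref{UB2} and~\ref{LB2} in hand, nothing in this argument is genuinely hard; the points that require attention are (i) carrying out the index shift correctly so that the hypothesis ``$n\geq k-1$'' (rather than ``$n\geq k$'') is what appears, (ii) invoking the variance-partial-sum comparison in the correct direction so that the $\sqrt{\sum_{j=1}^{k-1}\sdev{j}{n}^2}$ factors cancel and the resulting bound is uniform in $n$, and (iii) the two-parameter bookkeeping that converts $\bigl(\E|Z|+o(1)\bigr)/\bigl((1-\varepsilon_0)\E|Z|\bigr)$ into $1+\varepsilon$.
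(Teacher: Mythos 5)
Your proof is correct and follows essentially the same route as the paper: apply Proposition~\ref{UB2} to the \svf{} sequence, apply Proposition~\ref{LB2} to the optimal sequence together with the observation that the \svf{} ordering minimizes the partial sums of variances, and combine. Your version is in fact slightly more careful than the paper's (explicit index shift, the degenerate zero-variance case, and the $\varepsilon_0$-bookkeeping that turns $(1+\varepsilon_0)/(1-\varepsilon_0)$ into $1+\varepsilon$), but the underlying argument is identical.
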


\begin{proof}
By Proposition~\ref{LB2}, for any $\varepsilon>0$, we can choose $k$ sufficiently large such that
\begin{align*}
\E W_{n,k+1} \geq  (1-\varepsilon)\sqrt{\sum_{i=1}^k \sigma_{n,\tau(i)}^2} \cdot \E|Z|\geq(1-\varepsilon)\sqrt{\sum_{i=1}^k \sigma_{n,i}^2} \cdot \E|Z|
\end{align*}
for any sequence $\tau$, in particular for the optimal sequence. Here we also used that $\sigma_{n,1}^2,\dots, \sigma_{n,k}^2$ are the $k$ smallest variances.
For the \svf{} sequence, Proposition~\ref{UB2} gives us that for sufficiently large $k$ we have
\begin{align*}
\E \Wsvf_{n,k+1} \leq  (1+\varepsilon)\sqrt{\sum_{i=1}^k \sigma_{n,i}^2}\cdot \E|Z|.
\end{align*}
Combining these two bounds completes the proof.
\end{proof}

By the Lindley recursion we have
\begin{align*}
    \E W_{n,k+1}=\E(W_{n,k}+\stpn{\tau(k)})^+
\geq  \E(W_{n,k}+\stpn{\tau(k)})=\E W_{n,k},
\end{align*}
so, for any $n$ and $\tau$, $\E W_{n,k}$ is increasing in $k$.  With $K$ as in Lemma~\ref{enkel}, we consequently have
\begin{align*}
\sum_{k=1}^n \E \Wsvf_{n,k} \leq  K\E \Wsvf_{n,K}+\sum_{k=K+1}^n \E \Wsvf_{n,k} 
\leq  (1+\varepsilon)\left(K\E \Wopt_{n,K}+\sum_{k=K+1}^n \E \Wopt_{n,k}\right).
\end{align*}
Again using that $\E W_{n,k}$ is increasing in $k$, we also have
\begin{align*}
\left.K\,\E \Wopt_{n,K}\,\right/\sum_{k=K+1}^n \E \Wopt_{n,k}\leq  \frac{K}{n-K},
\end{align*}
which is smaller than $\varepsilon$ for sufficiently large $n$. Then we have
\begin{align*}
\sum_{k=1}^n \E \Wsvf_{n,k}&\leq  (1+\varepsilon)\left(K\E \Wopt_{n,K} + \sum_{k=K+1}^n \E \Wopt_{n,k}\right) \\
&\leq  (1+\varepsilon)^2 \sum_{k=K+1}^n \E \Wopt_{n,k} 
\leq  (1+\varepsilon)^2 \sum_{k=1}^n \E \Wopt_{n,k}.
\end{align*}
Recall that the total expected idle time is equal to $\E W_{n,n}$.
Since also $\E \Wsvf_{n,n} \leq (1+\varepsilon)\Wopt_{n,n}$ for $n \geq K$, 
\begin{align*}
    \omega \E \Wsvf_{n,n} +(1-\omega)\sum_{k=1}^n \E \Wsvf_{n,k} \leq  (1+\varepsilon)^2 \left(\omega \E \Wopt_{n,n} +(1-\omega)\sum_{k=1}^n \E \Wopt_{n,k}\right)
\end{align*}
for $n$ sufficiently large (independent of $\omega$).
As this holds for any $\varepsilon>0$, we find $\varrho_{\omega}(\stv{n}) \to 1$ as $n\to\infty$, and Theorem~\ref{!} is proved.
\end{proofof}


\section{Proofs corresponding to Section~\ref{sec:combined}}\label{App4} 

\subsection{Proofs corresponding to Section 5.1 and 5.3}

Here we prove Proposition~\ref{SVF}, Proposition~\ref{PropFreeLB}, and Theorem~\ref{rLognormal} from Section~\ref{sec:combined}.
In order to prove Proposition~\ref{SVF}, we need a number of lemmas. We will denote the waiting times and idle times under the \svf{} sequence with schedule $\bm{x}=\bm{\mu}+\alpha\bm{\sigma}$ by $\Wsvf_i$ and $\Isvf_i$ respectively, and we will denote the waiting times and idle times under the optimal combination of sequence and schedule by $\Wopt_i$ and $\Iopt_i$.

\begin{lemma}\label{expand}
Let $M$ be the maximum of a random walk with steps $Y_1,\dots,Y_k$. Now let $i\in\{1,\ldots,k\}$, and let $c\geq 1$. Let $M'$ be defined as the maximum of a random walk with steps $Y_1,\ldots, Y_i, c\,Y_{i+1},\ldots,c\,Y_k$. Then $M\leq  M'$.
\end{lemma}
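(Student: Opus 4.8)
The plan is to establish the inequality $M\leq M'$ \emph{pointwise}, i.e.\ for every realization of $(Y_1,\dots,Y_k)$, from which the statement follows in particular. The guiding observation is that the first $i$ steps are untouched, so the partial sums of the two walks coincide up to index $i$, and only the tail is rescaled.

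First I would fix a realization, write $S_j:=Y_1+\dots+Y_j$ for the partial sums of the original walk, and set $a:=S_i$ together with $R_j:=Y_{i+1}+\dots+Y_j=S_j-a$ for $i\leq j\leq k$ (so that $R_i=0$). Since every tail step $Y_{i+1},\dots,Y_j$ gets multiplied by $c$, the partial sums $S_j'$ of the modified walk satisfy $S_j'=S_j$ for $j\leq i$ and $S_j'=a+cR_j$ for $j>i$. Splitting each maximum into a ``head'' and a ``tail'' contribution then yields
\[
M=\max\Bigl\{\,\max_{1\leq j\leq i}S_j,\ a+\rho\,\Bigr\},\qquad
M'=\max\Bigl\{\,\max_{1\leq j\leq i}S_j,\ a+c\rho\,\Bigr\},
\]
with $\rho:=\max_{i+1\leq j\leq k}R_j$. (If ``the maximum of the random walk'' is meant to include the value $0$ at time $0$, one simply adjoins a $0$ inside each outer maximum; nothing below is affected. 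If $i=k$ there is no tail and $M=M'$ trivially.)

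The remaining step is a one-line case split on the sign of $\rho$. If $\rho\geq 0$, then $c\geq 1$ forces $c\rho\geq\rho$, hence $a+c\rho\geq a+\rho$, and therefore $M'\geq M$. If $\rho<0$, then $c\rho\leq\rho<0$, so both $a+\rho$ and $a+c\rho$ lie strictly below $a=S_i\leq\max_{1\leq j\leq i}S_j$; the tail is then irrelevant to both maxima and $M=M'=\max_{1\leq j\leq i}S_j$. In either case $M\leq M'$.

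There is no real obstacle here. The only things to watch are the bookkeeping identity $S_j'=a+cR_j$ for $j>i$ — it is the \emph{entire} tail increment $R_j$ that gets scaled, not just the maximal one — and the case $\rho<0$, where rescaling the tail looks counterproductive but in fact the tail is already dominated by the head, so equality holds. Once the decomposition $M=\max\{\cdot,\,a+\rho\}$, $M'=\max\{\cdot,\,a+c\rho\}$ is in place, the inequality $c\rho\geq\rho$ (valid exactly when $\rho\geq0$, the only regime in which the tail matters) closes the argument.
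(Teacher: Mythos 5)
Your proof is correct and is essentially the paper's argument in slightly different packaging: the paper conditions on the time at which $M$ is attained and notes that if this time exceeds $i$ the tail increment $Y_{i+1}+\dots+Y_j$ must be nonnegative, which is exactly your observation that the tail maximum $\rho$ only matters when $\rho\geq 0$, in which case $c\rho\geq\rho$. Both are pointwise arguments resting on the same key fact, so no further comparison is needed.
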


\begin{proof}
Suppose that the maximum of the first random walk is attained at time $j$, that is, $M=Y_1+\dots+Y_j$. If $j\leq  i$, then the second random walk also attains the value $Y_1+\dots+Y_j$, so $M\leq  M'$. If $j>i$, then the second random walk attains the value $Y_1+\dots+Y_i+cY_{i+1}+\dots+cY_j$. Now $Y_{i+1}+\dots+Y_j\geq  0$, as otherwise $Y_1+\dots+Y_i>Y_1+\dots+Y_j$, in contradiction with $M$ being the maximum. From this and $c\geq 1$ it follows that
\begin{align*}
Y_1+\dots+Y_i+c\,Y_{i+1}+\dots+c\,Y_j\geq  Y_1+\dots+Y_j,
\end{align*} 
so $M\leq  M'$.
\end{proof}

\begin{lemma}\label{max}
Suppose Assumption $\ref{family}$ holds, and we use the schedule $\bm{x}=\bm{\mu}+\alpha\bm{\sigma}$.
Let $M_k$ be the all-time maximum of a random walk with i.i.d.\ steps distributed as $\sigma_k(B-\alpha)$. Then $\Wsvf_{k+1}$ is stochastically dominated by $M_k$, for all $k$.
\end{lemma}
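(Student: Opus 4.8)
The plan is to exploit the location–scale structure to rewrite the random walk underlying $\Wsvf_{k+1}$ as one whose step sizes are scaled copies of $B-\alpha$ with \emph{non-increasing} scale factors, and then to inflate all these scale factors up to the common value $\sigma_k$ by repeatedly invoking Lemma~\ref{expand}.

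First I would set up notation. Under Assumption~\ref{family} and the schedule $\bm{x}=\bm{\mu}+\alpha\bm{\sigma}$, write $B_i=\mu_i+\sigma_i\tilde B_i$ with $\tilde B_1,\dots,\tilde B_n$ i.i.d.\ copies of $B$, so that $B_i-x_i=\sigma_i(\tilde B_i-\alpha)$. Since the \svf{} sequence is the identity and $\sigma_1\leq\dots\leq\sigma_n$, formula~\eqref{W} shows that $\Wsvf_{k+1}$ has the distribution of the maximum (over all partial sums, including the empty one equal to $0$) of the random walk with successive steps
\[
Y_m:=\sigma_{k+1-m}\bigl(\tilde B_{k+1-m}-\alpha\bigr),\qquad m=1,\dots,k,
\]
whose scale factors $\sigma_k\geq\sigma_{k-1}\geq\dots\geq\sigma_1$ are non-increasing in $m$.

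Next I would equalize the scales. For $i=1,2,\dots,k-1$ in turn, apply Lemma~\ref{expand} to multiply the suffix $Y_{i+1},\dots,Y_k$ by the constant $c_i:=\sigma_{k-i+1}/\sigma_{k-i}\geq 1$. A short induction shows that after the $i$-th step the first $i+1$ steps all have scale $\sigma_k$, while the remaining steps still have non-increasing scales bounded by $\sigma_k$; hence at every stage the suffix multiplier is indeed $\geq 1$, so each application is legitimate and, by Lemma~\ref{expand}, does not decrease the maximum (and, keeping the same underlying variables $\tilde B_m$ throughout, all these comparisons hold on a common probability space). After $k-1$ such steps all $k$ steps are i.i.d.\ with the distribution of $\sigma_k(B-\alpha)$, and the maximum has only grown. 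Since this $k$-step maximum is in turn at most the all-time maximum $M_k$ of the same i.i.d.\ random walk, which is finite almost surely because $\E[\sigma_k(B-\alpha)]=-\alpha\sigma_k<0$, we conclude that $\Wsvf_{k+1}\sleq M_k$, as required.

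The only mild subtlety is the bookkeeping in this induction: verifying that the step scales remain non-increasing after each suffix multiplication (which is exactly what keeps the multipliers in the $c\geq 1$ regime demanded by Lemma~\ref{expand}) and that the empty partial sum $0$ is consistently included in the maxima being compared. Beyond this routine check there is no real obstacle.
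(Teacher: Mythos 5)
Your proposal is correct and follows essentially the same route as the paper: rewrite the steps as $\sigma_i(B-\alpha)$ via the location--scale structure, repeatedly apply Lemma~\ref{expand} to inflate suffixes until all $k$ steps have scale $\sigma_k$, and then bound the $k$-step maximum by the all-time maximum $M_k$. The only (immaterial) difference is that you equalize scales starting from the front of the walk while the paper works from the tail, i.e., first multiplying the last step by $\sigma_2/\sigma_1$, then the last two by $\sigma_3/\sigma_2$, and so on.
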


\begin{proof}
By (\ref{W}), we know that $\Wsvf_{k+1}$ is the maximum of a random walk with steps $B_k-x_k,B_{k-1}-x_{k-1},\dots,B_1-x_1$.
Now note that $B_i\dis\mu_i+\sigma_i B$ and $x_i=\mu_i+\alpha\sigma_i$, hence $B_i-x_i\dis \sigma_i(B-\alpha)$. 
So $\Wsvf_{k+1}$ can be represented as the maximum of a random walk with steps distributed as $\sigma_k(B-\alpha),\dots,\sigma_1(B-\alpha)$.

We first multiply the last step of this random walk $\sigma_2/\sigma_1$. By Lemma~\ref{expand}, we then see that $W_{k+1}$ is stochastically dominated by the maximum of a random walk with steps distributed as $\sigma_k(B-\alpha),\dots,\sigma_2(B-\alpha),\sigma_2(B-\alpha)$. 
The next step is to multiply the last two steps with $\sigma_3/\sigma_2$. Again by Lemma~\ref{expand},  $\Wsvf_{k+1}$ is stochastically dominated by the maximum of a random walk with steps $\sigma_k(B-\alpha),\dots,\sigma_3(B-\alpha)$, $\sigma_3(B-\alpha)$, $\sigma_3(B-\alpha)$. Continuing in this way, we find that $\Wsvf_{k+1}$ is stochastically dominated by the maximum of a random walk with $k$ steps distributed as $\sigma_k(B-\alpha)$.

Now note that adding extra steps to a random walk can only increase the maximum. Therefore, we now find that $\Wsvf_{k+1}$ is stochastically dominated by $M_k$.
\end{proof}

\begin{lemma}\label{UBW}
Under Assumption $\ref{family}$, when using the schedule $\bm{x}=\bm{\mu}+\alpha\bm{\sigma}$, we have for all $k$ that
\begin{align*}
\E \Wsvf_{k+1}\leq  \frac{\sigma_k}{2\alpha}.
\end{align*}
\end{lemma}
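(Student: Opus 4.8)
The plan is to combine Lemma~\ref{max} with the classical Kingman bound on the stationary maximum of a random walk with i.i.d.\ steps. By Lemma~\ref{max}, $\Wsvf_{k+1}$ is stochastically dominated by $M_k$, the all-time maximum of a random walk with i.i.d.\ steps distributed as $\sigma_k(B-\alpha)$. Since $B$ has mean zero and $\alpha>0$, the step $\sigma_k(B-\alpha)$ has negative mean $-\alpha\sigma_k<0$, so $M_k$ is finite almost surely and Kingman's bound applies: $\E M_k \leq \Var(\sigma_k(B-\alpha))\big/\bigl(-2\,\E[\sigma_k(B-\alpha)]\bigr)$. Here $\Var(\sigma_k(B-\alpha)) = \sigma_k^2 \Var(B) = \sigma_k^2$ because $B$ has variance one, and $-\E[\sigma_k(B-\alpha)] = \alpha\sigma_k$. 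Hence $\E M_k \leq \sigma_k^2/(2\alpha\sigma_k) = \sigma_k/(2\alpha)$, and stochastic dominance gives $\E \Wsvf_{k+1} \leq \E M_k \leq \sigma_k/(2\alpha)$, which is exactly the claim.

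Concretely, the steps I would carry out are: (i) invoke Lemma~\ref{max} to reduce to bounding $\E M_k$; (ii) verify the hypotheses of Kingman's bound, namely that the i.i.d.\ step has negative mean (immediate from $\alpha>0$, $\sigma_k>0$ and $\E B=0$) and finite variance (immediate from $\Var B=1$); (iii) apply Kingman's bound in the form $\E\max_{j\geq 0}\sum_{i=1}^{j} Y_i \leq \Var(Y)/(-2\E Y)$ for i.i.d.\ $Y$ with $\E Y<0$ — the same bound already used, e.g., in Section~\ref{AsymptoticExample}; (iv) plug in the mean and variance of $\sigma_k(B-\alpha)$ and simplify. No genuinely hard step is involved; the only thing to be careful about is that Lemma~\ref{max} provides a bound in terms of the \emph{all-time} maximum $M_k$ of the infinite random walk (after padding with extra i.i.d.\ steps), which is precisely the quantity Kingman's bound controls, so the two results dovetail without any truncation argument.

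The main (minor) obstacle is purely bookkeeping: one must make sure the scaling constant $\alpha$ is strictly positive so that the comparison random walk has negative drift and $M_k$ is well-defined; this holds by construction since $\alpha$ is given by~\eqref{alphaatje}, which is manifestly positive. One should also note that the bound is \emph{uniform in $n$} and depends only on $\sigma_k$ and $\alpha$, not on the other service-time distributions, which is what makes it useful downstream in the proof of Proposition~\ref{SVF} (where it will be summed against the identity~\eqref{I} to also control the idle time).
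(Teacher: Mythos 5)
Your proposal is correct and follows exactly the paper's own argument: invoke Lemma~\ref{max} to dominate $\Wsvf_{k+1}$ by the all-time maximum $M_k$ of the i.i.d.\ walk with steps $\sigma_k(B-\alpha)$, then apply Kingman's bound and simplify using $\E B = 0$, $\Var B = 1$. Nothing further is needed.
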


\begin{proof}
We  need the following bound by \cite{Kingman}.
Let $M$ be the all-time maximum of a random walk with i.i.d.\ steps distributed as $Y$, with $\E Y<0$. Then
\begin{align*}
\E M\leq  \frac{\Var(Y)}{2|\E Y|}.
\end{align*}
Let $M_k$ be as in Lemma \ref{max}. Then, by Lemma~\ref{UBW} and Kingman's bound, we have
\begin{align*}
\E \Wsvf_{k+1}\leq  \E M_k \leq  \frac{\Var(\sigma_k(B-\alpha))}{2|\E(\sigma_k(B-\alpha))|}=\frac{\sigma_k^2}{2\sigma_k\alpha}=\frac{\sigma_k}{2\alpha},
\end{align*}
as claimed.
\end{proof}

Now we are ready to prove Proposition~\ref{SVF}.

\begin{customprop}{\ref{SVF}}
Suppose $\alpha$ is given by~$(\ref{alphaatje})$.
Under Assumption $\ref{family}$,
\begin{align*}
\Obj{\Id}{\bm{\mu}+\alpha\bm{\sigma}}{\omega}\leq  \sqrt{2\omega}\sum_{i=1}^{n-1} \sigma_i.
\end{align*}
\end{customprop}

\begin{proof}
We already have a bound on the mean waiting time in Lemma~\ref{UBW}, so we proceed by considering the mean idle time. 
Taking expectations in (\ref{I}) and using the fact that $x_i=\mu_i+\alpha\sigma_i$,
\begin{align*}
\sum_{i=1}^n \E \Isvf_i+\sum_{i=1}^n \mu_i =\sum_{i=1}^n \mu_i+\alpha\sum_{i=1}^{n-1} \sigma_i+\E \Wsvf_n.
\end{align*}
Hence, by virtue of Lemma~\ref{UBW},
\begin{align*}
\sum_{i=1}^n \E \Isvf_i =\alpha \sum_{i=1}^{n-1}\sigma_i+\E \Wsvf_n\leq  \alpha \sum_{i=1}^{n-1} \sigma_i +\frac{\sigma_{n-1}}{2\alpha}.
\end{align*}
Upon combining the bounds for the mean waiting times and the total mean idle time, we find that
\begin{align*}
\Obj{\Id}{\bm{\mu}+\alpha\bm{\sigma}}{\omega} \leq  \alpha \omega\sum_{i=1}^{n-1} \sigma_i +\frac{\omega}{2\alpha} \sigma_{n-1} +\frac{1-\omega}{2\alpha}\sum_{i=1}^{n-1} \sigma_i.
\end{align*}
Through standard calculus we find that this upper bound is minimized for the $\alpha$ given in (\ref{alphaatje}).
The corresponding upper bound for this $\alpha$ is
\begin{align*}
\sqrt{2\omega(1-\omega)}\sqrt{\sum_{i=1}^{n-1}\sigma_i \left(\sum_{i=1}^{n-1}\sigma_i+\frac{\omega}{1-\omega}\sigma_{n-1}\right)}.
\end{align*}
Using the fact that $\sigma_{n-1}\leq  \sum_{i=1}^{n-1}\sigma_i$, we find the upper bound in Proposition~\ref{SVF}.
\end{proof}

In order to prove Proposition~\ref{PropFreeLB}, we need the following lemma, which is known from the classical newsvendor problem (see e.g.\ \cite{Khouja}). 

\begin{lemma}\label{quantile}
Let $X$ be a random variable, with $Q_X$ its quantile function. Then for $\omega\in(0,1)$,
\begin{align*}
\omega\E(X-c)^- +(1-\omega) \E(X-c)^+
\end{align*}
is minimal for $c=Q_X(1-\omega)=\inf\{x:1-\omega\leq \p(X\leq  x)\}$.
\end{lemma}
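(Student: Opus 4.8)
The plan is to treat $f(c) := \omega\,\E(X-c)^- + (1-\omega)\,\E(X-c)^+$ as a function of the threshold $c$ and to show directly that $f(c) \geq f(c^\star)$ for every $c$, where $c^\star := Q_X(1-\omega) = \inf\{x : \p(X\leq x)\geq 1-\omega\}$. First I would rewrite $(X-c)^- = (c-X)^+$, so that $f$ is a sum of two convex functions of $c$ (hence convex, although this observation is not strictly needed for the comparison argument below). The substance of the proof is an explicit formula for $f(c) - f(c^\star)$.

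To obtain that formula, use $\tfrac{\mathrm{d}}{\mathrm{d}t}(t-X)^+ = \mathbbm{1}[t>X]$ and $\tfrac{\mathrm{d}}{\mathrm{d}t}(X-t)^+ = -\mathbbm{1}[t<X]$ (valid for a.e.\ $t$), which give, with the convention $\int_{c^\star}^c = -\int_c^{c^\star}$ when $c<c^\star$,
\[
(c-X)^+ - (c^\star - X)^+ = \int_{c^\star}^c \mathbbm{1}[t>X]\,\mathrm{d}t, \qquad (X-c)^+ - (X-c^\star)^+ = -\int_{c^\star}^c \mathbbm{1}[t<X]\,\mathrm{d}t.
\]
Taking expectations and applying Tonelli's theorem (the integrands are bounded), I would arrive at
\[
f(c) - f(c^\star) = \int_{c^\star}^c \bigl(\omega\,\p(X<t) - (1-\omega)\,\p(X>t)\bigr)\,\mathrm{d}t.
\]

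It then remains to check the sign of the integrand on each side of $c^\star$. For $t>c^\star$, right-continuity of the distribution function yields $\p(X<t)\geq \p(X\leq c^\star)\geq 1-\omega$, and since $\p(X>t)\leq 1-\p(X<t)$ the integrand is at least $\p(X<t)-(1-\omega)\geq 0$. For $t<c^\star$, the definition of the infimum gives $\p(X\leq t)<1-\omega$, and since $\p(X>t)=1-\p(X\leq t)$ and $\p(X<t)\leq \p(X\leq t)$ the integrand is at most $\p(X\leq t)-(1-\omega)<0$. In both cases $\int_{c^\star}^c(\cdots)\,\mathrm{d}t\geq 0$, whence $f(c)\geq f(c^\star)$ for every $c$, which is exactly the assertion of the lemma.

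I do not expect a genuine obstacle: this is the textbook newsvendor optimization. The only points needing care are the bookkeeping between $\p(X<t)$ and $\p(X\leq t)$ at atoms of $X$, and invoking right-continuity of the CDF to ensure $\p(X\leq c^\star)\geq 1-\omega$, i.e.\ that the infimum defining $Q_X(1-\omega)$ is attained.
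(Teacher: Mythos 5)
Your proof is correct. The paper itself does not prove Lemma~\ref{quantile}; it simply cites the classical newsvendor literature (Khouja), so there is no in-paper argument to compare against. Your derivation is the standard one — writing $f(c)-f(c^\star)$ as $\int_{c^\star}^{c}\bigl(\omega\,\p(X<t)-(1-\omega)\,\p(X>t)\bigr)\,\mathrm{d}t$ and checking the sign of the integrand on either side of $c^\star$ — and you handle the two points that genuinely need care (the distinction between $\p(X<t)$ and $\p(X\leq t)$ at atoms, and right-continuity of the CDF to guarantee $\p(X\leq c^\star)\geq 1-\omega$), so the argument is complete as written.
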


\begin{customprop}{\ref{PropFreeLB}}
Under Assumption $\ref{family}$, for any sequence and schedule,  
\begin{equation*}
\Obj{\tau}{\bm{x}}{\omega}
 \geq  \left[\omega\E B(\omega)^- +(1-\omega)\E B(\omega)^+ \right]\sum_{i=1}^{n-1} \sigma_i.
\end{equation*}
\end{customprop}

\begin{proof}
Consider some arbitrary sequence $\tau$ and schedule $\bm{x}$.
Recall that the idle and waiting times satisfy the recursions in (\ref{LindleyModel}).
We consider 
\begin{align}\label{Lindley}
\omega \E I_{k+1}+(1-\omega)\E W_{k+1} = \text{ }\omega\E\left(W_k+B_{\tau(k)}-x_{\tau(k)}\right)^-  +(1-\omega) \E \left(W_k+B_{\tau(k)}-x_{\tau(k)}\right)^+. 
\end{align}
Now note that $W_k+B_{\tau(k)}-x_{\tau(k)}=B_{\tau(k)}-(x_{\tau(k)}-W_k)$, so by minimizing (\ref{Lindley}) over all possible values of $x_{\tau(k)}-W_k$, we find by Lemma~\ref{quantile} that
\begin{align*}
\omega\E I_{k+1} +(1-\omega)\E W_{k+1}\geq  \text{ }\omega\E(B_{\tau(k)}-Q_{B_{\tau(k)}}(1-\omega))^-  +(1-\omega)\E(B_{\tau(k)}-Q_{B_{\tau(k)}}(1-\omega))^+.
\end{align*}

Because $B_{\tau(k)}\dis \mu_{\tau(k)}+\sigma_{\tau(k)}B$, it follows that
$
Q_{B_{\tau(k)}}(1-\omega)=\mu_{\tau(k)}+\sigma_{\tau(k)}Q_B(1-\omega).
$
Then 
\begin{align*}
B_{\tau(k)}-Q_{B_{\tau(k)}}(1-\omega)\dis \mu_{\tau(k)}+\sigma_{\tau(k)}B-(\mu_{\tau(k)}+\sigma_{\tau(k)}Q_B(1-\omega))  = \sigma_{\tau(k)}(B-Q_B(1-\omega)),
\end{align*}
which, recalling that $B(\omega) = B-Q_B(1-\omega)$, leads to
\begin{align*}
\omega\E I_{k+1} +(1-\omega)\E W_{k+1} \geq  \sigma_{\tau(k)}\left[\omega\E B(\omega)^- +(1-\omega)\E B(\omega)^+\right].
\end{align*}

Note that $\sum_{i=1}^{n-1} \sigma_{\tau(i)}\geq  \sum_{i=1}^{n-1}\sigma_i$, as the $\sigma_i$ were put  in increasing order. Now summing over $k$ we find the lower bound of Proposition~\ref{PropFreeLB}.
\end{proof}

To prove Theorem~\ref{rLognormal} we need the following lemma, which fulfills the same role for lognormally distributed service times as Lemma~\ref{max} does for location-scale families. We then prove the upper and lower bounds needed for Theorem~\ref{rLognormal} in two propositions.

\begin{lemma}\label{maxLog}
Suppose the $B_i$ are lognormally distributed with $m_1\leq \dots \leq m_n$ and $s_1^2\leq \dots\leq s_n^2$, and that we use the schedule $\bm{x}=(1+\alpha)\bm{\mu}$. Let $M_k$ be the all-time maximum of  a random walk with i.i.d.\ steps distributed as $B_k-x_k$. Then $\E \Wsvf_{k+1} \leq \E M_k$.
\end{lemma}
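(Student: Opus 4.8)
The plan is to follow the proof of Lemma~\ref{max}, but with one extra ingredient to handle the fact that the steps $B_i-x_i$ now differ from one another not only by a scale factor but also in ``shape''. Write $Z$ for a standard normal random variable. A lognormal $B_i$ with $m_i=\E[\ln B_i]$ and $s_i^2=\Var(\ln B_i)$ satisfies $B_i\dis \mu_i L_i$, where $\mu_i=\E B_i=\exp(m_i+s_i^2/2)$ and $L_i:=\exp(s_iZ-s_i^2/2)$ has $\E L_i=1$; we take the $L_i$ independent. Since $x_i=(1+\alpha)\mu_i$, the steps satisfy $B_i-x_i\dis \mu_i(L_i-1-\alpha)$, and by~\eqref{W}, $\Wsvf_{k+1}$ is the maximum of the random walk with steps $\mu_k(L_k-1-\alpha),\mu_{k-1}(L_{k-1}-1-\alpha),\dots,\mu_1(L_1-1-\alpha)$.

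First I would deal with the shape. For $i\leq k$ I would show $L_i\cleq L_k$ via the coupling $\hat L_k:=\hat L_i\cdot\exp\bigl(\sqrt{s_k^2-s_i^2}\,Z'-(s_k^2-s_i^2)/2\bigr)$, where $\hat L_i\dis L_i$ and $Z'$ is an independent standard normal: the product is again distributed as $L_k$, and $\E[\hat L_k\,|\,\hat L_i]=\hat L_i$, so Lemma~\ref{condExp} applies. Applying the mean-preserving, positive-scale affine map $x\mapsto\mu_i(x-1-\alpha)$ preserves the convex order, so $B_i-x_i\cleq\mu_i(L_k-1-\alpha)$, both sides having mean $-\alpha\mu_i$. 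Since the maximum of a random walk is convex in each of its independent steps — the observation already used in the proof of Theorem~\ref{nonsymmComp} — replacing each step $B_i-x_i$ with $i<k$ by an independent copy of $\mu_i(L_k-1-\alpha)$ can only increase the expected maximum. Hence $\E\Wsvf_{k+1}\leq\E M'$, where $M'$ is the maximum of the random walk whose step at position $k-i+1$ is an independent copy of $\mu_i(L_k-1-\alpha)$, for $i=k,k-1,\dots,1$.

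Now all steps of this walk have the same shape, with scale factors $\mu_k\geq\mu_{k-1}\geq\dots\geq\mu_1$ along the walk (the inequality $\mu_1\leq\dots\leq\mu_k$ following because $m_i+s_i^2/2$ is nondecreasing in $i$). At this point the argument is exactly as in Lemma~\ref{max}: multiply the last step by $\mu_2/\mu_1\geq1$, then the last two steps by $\mu_3/\mu_2\geq1$, and so on, each multiplication increasing the maximum by Lemma~\ref{expand}, until every step is distributed as $\mu_k(L_k-1-\alpha)\dis B_k-x_k$. Finally, adjoining further i.i.d.\ steps only increases the maximum, so the resulting $k$-step maximum is stochastically dominated by $M_k$, which yields $\E\Wsvf_{k+1}\leq\E M_k$.

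The one genuinely new point relative to Lemma~\ref{max} — and the step to be careful about — is the shape step. Note that $B_i-x_i$ and $B_k-x_k$ have different means ($-\alpha\mu_i$ versus $-\alpha\mu_k$), so they cannot be compared in convex order directly; moreover, since $\E[\mu_i(L_k-1-\alpha)]>\E[\mu_k(L_k-1-\alpha)]$ while $\mu_i(L_k-1-\alpha)\cleq\mu_k(L_k-1-\alpha)$ fails as well, no single stochastic order simultaneously captures both the log-variance effect and the mean effect. This is why the mean effect must be split off and handled by the scaling argument of Lemma~\ref{expand} (precisely as in the location-scale case), while the log-variance effect is handled by the convex-order replacement; checking that each replacement is mean-preserving, so that convexity of the maximum really delivers the inequality, is the main thing to verify.
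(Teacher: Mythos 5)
Your proof is correct and uses essentially the same two ingredients as the paper's: the convex-order comparison of the lognormal ``shapes'' via the multiplicative coupling $\exp(s_iZ+\sqrt{s_k^2-s_i^2}\,Z')$ together with Lemma~\ref{condExp} and convexity of the maximum, and the scaling argument of Lemma~\ref{expand} for the means. The only (cosmetic) difference is the order of operations: you perform all shape replacements first and then all rescalings, whereas the paper interleaves them, converting the steps $B_i-x_i$ into $B_{i+1}-x_{i+1}$ one index at a time.
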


\begin{proof}
Let $M_k^{(i)}$ be the maximum of a random walk with steps distributed as $B_k-x_k,B_{k-1}-x_{k-1},\dots,B_{i+1}-x_{i+1}$ followed by $i$ steps distributed as $B_i-x_i$. We will prove that $\E M_k^{(i)}\leq \E M_k^{(i+1)}$ for $i=1,\dots,k-1$. As $\E \Wsvf_{k+1}=\E M_k^{(1)}$ and $\E M_k^{(k)}\leq \E M_k$ (adding extra steps increases the maximum), it then follows that $\E \Wsvf_{k+1} \leq \E M_k$.

Consider the random walk with steps distributed as $B_k-x_k,B_{k-1}-x_{k-1},\dots,B_{i+1}-x_{i+1}$, followed by $i$ steps distributed as $B_i-x_i$. Let $Z$ be a standard normal random variable. Note that
\begin{align*}
B_i-x_i=B_i-(1+\alpha)\mu_i &\dis \exp(m_i+s_iZ)-(1+\alpha)\exp(m_i+s_i^2/2)\\
&=\exp(m_i)\left[\exp(s_iZ)-(1+\alpha)\exp(s_i^2/2)\right].
\end{align*}
Use Lemma~\ref{expand} to show we get an upper bound on $M_k^{(i)}$ by replacing all steps distributed as $B_i-x_i$ by steps distributed as
\begin{align*}
X_i':=\exp(m_{i+1}+(s_{i+1}^2-s_i^2)/2)\left[\exp(s_iZ)-(1+\alpha)\exp(s_i^2/2)\right].
\end{align*}

Let $Z'$ be another standard normal random variable independent of $Z$. Then
\begin{align*}
B_{i+1}-x_{i+1}\dis \exp(m_{i+1}+s_iZ+\epsilon Z')-(1+\alpha)\exp(m_{i+1}+(s_i^2+\epsilon^2)/2)
\end{align*}
with $\epsilon:=\sqrt{s_{i+1}^2-s_i^2}$, 
and
\begin{align*}
&\E[ \exp(m_{i+1}+s_iZ+\epsilon Z')-(1+\alpha)\exp(m_{i+1}+(s_i^2+\epsilon^2)/2)|X_i']\\
&=\E[\exp(m_{i+1}+s_iZ+\epsilon Z')-(1+\alpha)\exp(m_{i+1}+(s_i^2+\epsilon^2)/2)|Z]\\
&=\exp(m_i+\epsilon^2/2)\left[\exp(s_iZ)-(1+\alpha)\exp(s_i^2/2)\right]= X_i'.
\end{align*}
It follows by Lemma~\ref{condExp} that $X_i'\cleq B_{i+1}-x_{i+1}$. As the maximum of a random walk is a convex function in each of the individual stepsizes, we can replace each step distributed as $X_i'$ by one distributed as $B_{i+1}-x_{i+1}$. Therefore, $\E M_k^{(i)}\leq \E M_k^{(i+1)}$, which completes the proof.
\end{proof}

\begin{proposition}\label{LognormalUB}
Suppose the $B_i$ are lognormally distributed with $m_1\leq \dots\leq m_n$ and $s_1^2\leq \dots\leq s_n^2$. Suppose $\alpha$ is given by
\begin{align*}
\alpha=\frac{1}{\sqrt{2\omega}} \sqrt{(\exp(s_{n-1}^2)-1)}.
\end{align*}
Then
\[
\Obj{\Id}{(1+\alpha)\bm{\mu}}{\omega}\leq 2\omega \alpha\sum_{i=1}^{n-1}\exp(m_i+s_i^2/2).
\]
\end{proposition}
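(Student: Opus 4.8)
The plan is to follow the same route as in the proof of Proposition~\ref{SVF}: first bound each expected waiting time $\E\Wsvf_{k+1}$ by combining Lemma~\ref{maxLog} with Kingman's bound, and then convert this into a bound on the total expected idle time by means of the identity~\eqref{I}.

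\emph{Bounding the waiting times.} Fix $k\in\{1,\dots,n-1\}$. By Lemma~\ref{maxLog}, $\E\Wsvf_{k+1}\leq \E M_k$, where $M_k$ is the all-time maximum of a random walk with i.i.d.\ steps distributed as $B_k-x_k$. Since $B_k$ is lognormal, $\E B_k=\exp(m_k+s_k^2/2)=\mu_k$ and $\Var B_k=(\exp(s_k^2)-1)\mu_k^2$; as $x_k=(1+\alpha)\mu_k$, the step $B_k-x_k$ has mean $-\alpha\mu_k<0$ and variance $(\exp(s_k^2)-1)\mu_k^2$. The bound by \cite{Kingman} recalled in the proof of Lemma~\ref{UBW} therefore gives $\E\Wsvf_{k+1}\leq (\exp(s_k^2)-1)\mu_k/(2\alpha)$. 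I would then use $s_k^2\leq s_{n-1}^2$ (valid for $k\leq n-1$) together with the choice of $\alpha$, which is precisely the identity $\exp(s_{n-1}^2)-1=2\omega\alpha^2$, to conclude that $\E\Wsvf_{k+1}\leq \omega\alpha\mu_k$ for every $k=1,\dots,n-1$; and of course $\E\Wsvf_1=0$.

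\emph{Bounding the idle times and assembling.} Taking expectations in \eqref{I} with $\tau=\Id$ and $x_i=(1+\alpha)\mu_i$, the $\sum_i\mu_i$ terms cancel and one obtains $\sum_{i=1}^n\E\Isvf_i=\alpha\sum_{i=1}^{n-1}\mu_i+\E\Wsvf_n$. Substituting $\E\Wsvf_n\leq\omega\alpha\mu_{n-1}\leq\omega\alpha\sum_{i=1}^{n-1}\mu_i$ and $\sum_{k=1}^{n-1}\E\Wsvf_{k+1}\leq\omega\alpha\sum_{k=1}^{n-1}\mu_k$ into the cost function yields
\begin{align*}
\Obj{\Id}{(1+\alpha)\bm{\mu}}{\omega}
&=\omega\sum_{i=1}^n\E\Isvf_i+(1-\omega)\sum_{i=1}^n\E\Wsvf_i\\
&\leq \omega\Bigl(\alpha\sum_{i=1}^{n-1}\mu_i+\omega\alpha\sum_{i=1}^{n-1}\mu_i\Bigr)+(1-\omega)\,\omega\alpha\sum_{i=1}^{n-1}\mu_i\\
&=2\omega\alpha\sum_{i=1}^{n-1}\mu_i,
\end{align*}
and since $\mu_i=\exp(m_i+s_i^2/2)$ this is exactly the claimed bound.

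I do not anticipate a serious obstacle: the real work has already been done in Lemma~\ref{maxLog}, whose proof handles the lognormal-specific convex-ordering comparison that allows the heterogeneous random-walk steps to be replaced by i.i.d.\ ones. The only point that requires care is that the prescribed $\alpha$ is tuned so that $(\exp(s_{n-1}^2)-1)/(2\alpha)=\omega\alpha$, which is exactly what makes the per-slot waiting bound $\omega\alpha\mu_k$ combine with the $\alpha\sum_{i}\mu_i$ idle-time term to produce the overall constant $2\omega\alpha$.
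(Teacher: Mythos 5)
Your proof is correct and follows essentially the same route as the paper's: Lemma~\ref{maxLog} to reduce to an i.i.d.\ random walk, Kingman's bound for $\E M_k$, and identity~\eqref{I} to handle the idle time. The only (cosmetic) difference is that you substitute the prescribed value of $\alpha$ immediately via $\exp(s_{n-1}^2)-1=2\omega\alpha^2$, whereas the paper carries a general $\alpha$ through and minimizes the resulting expression at the end.
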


\begin{proof}
With $M_k$ as in Lemma~\ref{maxLog}, we can now apply Kingman's bound to find
\begin{align*}
\E\Wsvf_{k+1}&\leq \E M_k\leq \frac{\Var(B_k)}{2|\E B_k-(1+\alpha)\E B_k|}=\frac{1}{2\alpha}(\exp(s_k^2)-1)\exp(m_k+s_k^2/2)\\
&\leq \frac{1}{2\alpha}(\exp(s_{n-1}^2)-1)\exp(m_k+s_k^2/2).
\end{align*}
By equation (\ref{I}) we also have
\begin{align*}
\sum_{i=1}^n \E \Isvf_i &=\alpha\sum_{i=1}^{n-1} \E B_i+\E \Wsvf_n \\
&\leq \alpha\sum_{i=1}^{n-1}\exp(m_i+s_i^2/2)+\frac{1}{2\alpha}(\exp(s_{n-1}^2)-1)\exp(m_{n-1}+s_{n-1}^2/2)\\
&\leq \alpha\sum_{i=1}^{n-1}\exp(m_i+s_i^2/2)+\frac{1}{2\alpha}(\exp(s_{n-1}^2)-1)\sum_{i=1}^{n-1}\exp(m_i+s_i^2/2).
\end{align*}
In total, we then find
\begin{align*}
\Obj{\Id}{(1+\alpha)\bm{x}}{\omega}
\leq \omega\alpha\sum_{i=1}^{n-1}\exp(m_i+s_i^2/2)+\frac{1}{2\alpha}(\exp(s_{n-1}^2)-1)\sum_{i=1}^{n-1}\exp(m_i+s_i^2/2).
\end{align*}
Minimizing this upper bound over $\alpha$, we obtain the result.
\end{proof}

\begin{proposition}\label{LognormalLB}
Suppose the $B_i$ are lognormally distributed with $m_1\leq \dots\leq m_n$ and $s_1^2\leq \dots\leq s_n^2$. Then, for any sequence and schedule,
\begin{align*}
\Obj{\tau}{\bm{x}}{\omega} 
\geq \left[(1-\omega)\p(Z\geq Q_Z(1-\omega)-s_1)-\omega\p(Z\leq Q_Z(1-\omega)-s_1)\right]\sum_{i=1}^{n-1} \exp(m_i+s_i^2/2).
\end{align*}
\end{proposition}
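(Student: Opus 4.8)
The plan is to follow the proof of Proposition~\ref{PropFreeLB}, keeping the schedule-independent lower bound that comes from the newsvendor problem and then carrying out the relevant moment computations for the lognormal law rather than for a location-scale family. As in that proof, I start from the Lindley recursion~\eqref{LindleyModel}: writing $W_k+B_{\tau(k)}-x_{\tau(k)}=B_{\tau(k)}-(x_{\tau(k)}-W_k)$ and conditioning on $W_k$ (which is independent of $B_{\tau(k)}$), Lemma~\ref{quantile} gives, for every $k$,
\[
\omega\,\E I_{k+1}+(1-\omega)\,\E W_{k+1}\;\geq\;\omega\,\E\bigl(B_{\tau(k)}-c_{\tau(k)}\bigr)^- +(1-\omega)\,\E\bigl(B_{\tau(k)}-c_{\tau(k)}\bigr)^+,
\]
where $c_i:=Q_{B_i}(1-\omega)$. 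This is precisely the step appearing in the proof of Proposition~\ref{PropFreeLB}; it uses neither the schedule nor the lognormal form.

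The substantive part is to evaluate this lower bound explicitly. Writing $B_i\dis\exp(m_i+s_iZ)$ with $Z$ standard normal, and setting $q:=Q_Z(1-\omega)$, monotonicity of $x\mapsto e^{m_i+s_ix}$ yields $c_i=\exp(m_i+s_iq)$ and $\{B_i>c_i\}=\{Z>q\}$. Using the Gaussian identities $\E[e^{s_iZ}\mathbbm{1}_{Z>q}]=e^{s_i^2/2}\,\p(Z>q-s_i)$ and $\E[e^{s_iZ}\mathbbm{1}_{Z<q}]=e^{s_i^2/2}\,\p(Z<q-s_i)$, together with $\p(Z>q)=\omega$, a short computation shows that the two $e^{m_i+s_iq}$-terms cancel and
\[
\omega\,\E(B_i-c_i)^- +(1-\omega)\,\E(B_i-c_i)^+\;=\;e^{m_i+s_i^2/2}\,\psi(s_i),\qquad \psi(s):=(1-\omega)\,\p\bigl(Z\geq q-s\bigr)-\omega\,\p\bigl(Z\leq q-s\bigr).
\]
Here $e^{m_i+s_i^2/2}=\E B_i$, and $\psi$ is nondecreasing with $\psi(0)=0$ (since $\p(Z\geq q)=\omega$ and $\p(Z\leq q)=1-\omega$), hence $\psi(s)\geq 0$ for all $s\geq 0$.

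It remains to assemble the global bound. Summing the per-slot inequality over $k=1,\dots,n-1$ (recalling $W_1=I_1=0$, as in the proof of Proposition~\ref{PropFreeLB}) gives $\Obj{\tau}{\bm{x}}{\omega}\geq\sum_{k=1}^{n-1}\E B_{\tau(k)}\,\psi(s_{\tau(k)})$. Because $s_1\leq\dots\leq s_n$ and $\psi$ is nondecreasing, $\psi(s_{\tau(k)})\geq\psi(s_1)\geq 0$, so the sum is at least $\psi(s_1)\sum_{k=1}^{n-1}\E B_{\tau(k)}$; and since $\E B_i=\exp(m_i+s_i^2/2)$ is nondecreasing in $i$ (both $m_i$ and $s_i^2$ are), $\E B_n$ is the largest of the means, whence $\sum_{k=1}^{n-1}\E B_{\tau(k)}\geq\sum_{i=1}^{n-1}\E B_i$. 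Combining these inequalities gives exactly the stated bound. I expect no deep obstacle here: the only real work is the bookkeeping in the explicit lognormal moment computation (the truncated exponentials and the cancellation of the $e^{m_i+s_iq}$-terms), and being careful that the two monotonicities — of $\psi$ and of $\E B_i$ — are applied in the right order so that discarding the highest-variance patient leaves precisely $\sum_{i=1}^{n-1}\E B_i$; this is where the ordering hypotheses on $m_i$ and $s_i^2$ enter.
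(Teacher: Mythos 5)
Your proposal is correct and follows essentially the same route as the paper's proof: the newsvendor lower bound per slot via Lemma~\ref{quantile}, the explicit lognormal evaluation yielding $\exp(m_{\tau(k)}+s_{\tau(k)}^2/2)\,\psi(s_{\tau(k)})$, monotonicity of $\psi$ to replace $s_{\tau(k)}$ by $s_1$, and the ordering of the means to drop the last patient. The only difference is that you spell out the Gaussian truncated-moment computation and the final summation step, which the paper leaves implicit.
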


\begin{proof}
Similar as in the location-scale family case, we can use Lemma~\ref{quantile} to find
\begin{align*}
\omega \E I_{k+1}+(1-\omega)\E W_{k+1}&=\omega \E(B_{\tau(k)}-(x_{\tau(k)}-W_k))^-+(1-\omega)\E(B_{\tau(k)}-(x_{\tau(k)}-W_k))^+ \\
&\geq \omega\E(B_{\tau(k)}-Q_{B_{\tau(k)}}(1-\omega))^- +(1-\omega)\E(B_{\tau(k)}-Q_{B_{\tau(k)}}(1-\omega))^+.
\end{align*} 
Computing this lower bound, we find with $Z$ being a standard normal random variable that
\begin{align*}
&\omega \E I_{k+1}+(1-\omega)\E W_{k+1}\\
&\geq \exp(m_{\tau(k)}+s_{\tau(k)}^2/2) \left[(1-\omega)\p(Z\geq Q_Z(1-\omega)-s_{\tau(k)})-\omega\p(Z\leq Q_Z(1-\omega)-s_{\tau(k)})\right].
\end{align*}
It can be easily seen that
\begin{align*}
(1-\omega)\p(Z\geq Q_Z(1-\omega)-s_{\tau(k)})-\omega\p(Z\leq Q_Z(1-\omega)-s_{\tau(k)})
\end{align*}
is an increasing function in $s_{\tau(k)}$, and so is minimal for $s_{\tau(k)}=s_1$. Using this and summing over $k$, we find the lower bound of the proposition.
\end{proof}

\begin{proofof}{Theorem~\ref{rLognormal}.}
This follows directly from Proposition~\ref{LognormalUB} and Proposition~\ref{LognormalLB}.
\end{proofof}


\subsection{Proofs corresponding to Section \ref{AsymptoticExample}}\label{AsExampleApp} 

We proceed by providing a detailed argumentation behind the results presented in Section \ref{AsymptoticExample}.

\begin{customlemma}{\ref{lem:svfinterarrival}}
    For the \svf{} sequence in the limit $c\to\infty$, the optimal interarrival times for groups $1$ and $2$ are $x^{(1)}=\frac12\sqrt2$ and $x^{(2)}=a$ respectively.
    Patients in group $1$ incur an average expected waiting time of $\E W^{(1)} = \tfrac12\sqrt{2}$, 
    whereas patients in group $2$ incur no waiting time.
    This yields an average total cost of $\tfrac12(a+\sqrt2)$.
\end{customlemma}

\begin{proof}
First, consider group 1.
Crucially, Kingman's bound is known to be tight for this distribution of $B^{(1)}$ as $c\to\infty$ (\cite{Daley}).
Thus
\begin{align*}
    \lim_{c\to\infty} \E W^{(1)} = \frac{\Var\; B^{(1)}}{2x^{(1)}} = \frac{1}{2x^{(1)}}.
\end{align*}
The average cost for patients in group 1 is thus
\[
    \E I^{(1)} + \E W^{(1)} = x^{(1)} + \frac1{2x^{(1)}},
\]
which is minimized by the choice $x^{(1)}=\frac12\sqrt2$.

Now consider group 2.
Note that $W^{(2)}$ is the maximum of a random walk with steps distributed as $B^{(2)}-x^{(2)}$. 
As a lower bound, we take the value of the random walk right before the step where $B^{(2)}$ takes value $-a$ for the first time. 
The number of steps we take then has a geometric distribution with probability $\frac12$ (i.e.\ the probability of taking $k$ steps is $(1/2)^{k+1}$ for $k=0,1,2,\dots$), which has mean 1, and each of these steps has value $a-x^{(2)}$. Therefore, we find
\begin{align*}
    \E I^{(2)} + \E W^{(2)}\geq x^{(2)}+ (a-x^{(2)}) = a.
\end{align*}
For $x^{(2)}=a$, this lower bound is achieved, as all steps $B^{(2)}-a$ can never be positive and so the waiting time is zero.
In conclusion, the minimal average cost as $c\to\infty$ is $\tfrac12(a+\sqrt2)$, attained with $x^{(1)}=\frac12\sqrt2$ and $x^{(2)}=a$.
\end{proof}

\begin{customlemma}{\ref{lem:bothsame}}
$\lim_{c\to\infty} \E V^{(1)}=\lim_{c\to\infty} \E V^{(2)}$.
\end{customlemma}

\begin{proof}
By the Lindley recursion we have $\E V^{(2)}=\E(V^{(1)}+B^{(1)}-x^{(1)})^+$, 
and conditioning on the two values that $B^{(1)}$ can take we then find
\begin{align*}
\lim_{c\to\infty} \E V^{(2)} &=\lim_{c\to\infty} (1-p)\E \left(V^{(1)}-\frac1c-x^{(1)}\right)^+ +p\E\left(V^{(1)}+c-x^{(2)}\right)^+\\
&\leq \lim_{c\to\infty} \E V^{(1)} +pc =\lim_{c\to\infty} \E V^{(1)}+ \frac{c}{1+c^2}=\lim_{c\to\infty} \E V^{(1)}.
\end{align*}
We also have $\E V^{(2)}\geq \E V^{(1)}$, since $V^{(1)}$ results from the same random walk but with an extra step $B^{(2)}-x^{(2)}\leq 0$ in the beginning.
\end{proof}

\begin{customlemma}{\ref{lem:simplerrw}}
    $\lim_{c \to \infty} \E V^{(1)} \leq \lim_{c \to \infty} \E \tilde{V}$.
\end{customlemma}

\begin{proof}
    Let $M^{(1)}$ be the maximum of $S^{(1)}$ over the first $2c$ steps only.
    We will first show that $\E [V^{(1)} - M^{(1)}] \to 0$ as $c \to \infty$.
    Recall that by Kingman's bound, we have the rough estimate $\E V^{(1)} \leq K$, where $K = ({1+a^2})/({2a+\sqrt{2}})$.
    Let $E$ be the event that $S^{(1)}_{2c} \leq -2c$, {and let $\bar{E}$ be its complement.}
    By Chebyshev's inequality,
    \[ \p(\bar{E}) = \p\left(S^{(1)}_{2c} - \E S^{(1)}_{2c} > c(2a+\sqrt{2} - 2)\right) \leq \frac{2c(1+a^2)}{(c(2a+\sqrt{2}-2))^2} = O(c^{-1}). \]
    Let $Z$ be the maximum of the random walk obtained by removing the first $2c$ steps from $(S^{(1)})_{j \geq 0}$.
    Clearly $Z$ has the same law as $V^{(1)}$, {and} $V^{(1)} - M^{(1)} \leq Z$, so that
    \[ \E\left[V^{(1)} - M^{(1)}\, \big|\, \bar{E}\,\right] \leq \E Z \leq K. \]

    Condition now on $E$ occurring.
    Then $V^{(1)} - M^{(1)} \leq (Z - 2c)^+$.
    Let $R$ be the event that $Z \geq c$. {When the random walk defining $Z$ first crosses level $c$, it will still be below level $2c$. As the random walk after crossing level $c$ has the same law as the random walk defining $Z$, we find that}
    \[
        \E\left[(Z-2c)^+\right] = \E\left[(Z - 2c)^+ | R\right]\p(R)\leq \E[Z]\,\p(R) \leq K\, \p(R).
    \]
    By Markov's inequality, $\p(R) \leq \E Z / c \leq K / c$.
    Combining the above,
    \[ \E\left[V^{(1)} - M^{(1)}\right] \leq K \p(\bar{E}) + K\p(R) \p(E) = O(c^{-1}),
    \]
    which vanishes as $c\to\infty$, as required.

    Let $\tilde{M}$ be the maximum of $\tilde{S}$ over the first $2c$ steps.
It now suffices to show that $\E[M^{(1)} - \tilde{M}] \to 0$ as $c \to \infty$.
Let $F$ be the event that there is at least one upwards jump in the first {$2c$} steps.
    Then,
    \[ 
        \p(F) = 1 - (1-p)^{2c} = 2pc + O\left((pc)^2\right) = \frac2c + O(c^{-2}). 
    \]
    If $F$ does not occur, then $M^{(1)} = \tilde{M} = 0$.
    We therefore concentrate on the scenario that $F$ does apply.
    Note that the increments $B^{(2)}_i$ are independent of $F$.
    We have
    \[ S^{(1)}_j - \tilde{S}_j = \sum_{i=1}^j B^{(2)}_i . \]
    By Hoeffding's inequality,
   $
        \p(S^{(1)}_j - \tilde{S}_j > t) \leq {\e^{-t^2/(ja)}},
   $
    and so by a union bound,
    \[ 
        \p(M^{(1)} - \tilde{M} > c^{2/3}) \leq 2c \cdot  {\e^{-c^{1/3}/(2a)}}.
    \]
    Since it is always the case that $M^{(1)} - \tilde{M} \leq  {2ca}$, we have that
    \[ 
        \E[M^{(1)} - \tilde{M} \,|\, F\,] \leq  {4c^2a\e^{-c^{1/3}/(2a)}} + c^{2/3} = O(c^{2/3}).
    \]
    We thus obtain that
    $
        \E[M^{(1)} - \tilde{M}] = \E[M^{(1)} - \tilde{M} \,|\, F\,] \, \p(F) = O(c^{-1/3}),
   $ which vanishes as $c\to\infty$,
    as required.
\end{proof}


\section{Lognormal distributions that satisfy the dilation order}\label{lognormalDil}

\begin{proposition}
Suppose that $A$ and $B$ are lognormally distributed random variables, such that $\E[\ln A]\leq \E[\ln B]$ and $\Var(\ln A)\leq \Var(\ln B)$. Then $A\dleq B$.
\end{proposition}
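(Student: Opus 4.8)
The plan is to produce an explicit coupling of $A$ and $B$ and then appeal to Lemma~\ref{condExp}. Write $A\dis\exp(\mu_A+\sigma_A Z)$ and $B\dis\exp(\mu_B+\sigma_B Z)$ for a standard normal $Z$, where $\mu_A=\E[\ln A]\le\mu_B=\E[\ln B]$ and $\sigma_A^2=\Var(\ln A)\le\sigma_B^2=\Var(\ln B)$; we may assume $0<\sigma_A\le\sigma_B$, the case $\sigma_A=0$ being trivial since then $A-\E A\equiv 0$. Recall $\E A=\exp(\mu_A+\tfrac12\sigma_A^2)$, $\E B=\exp(\mu_B+\tfrac12\sigma_B^2)$, and set $\kappa:=\E B/\E A=\exp\big(\mu_B-\mu_A+\tfrac12(\sigma_B^2-\sigma_A^2)\big)\ge 1$.

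First I would compare $\kappa A$ with $B$. Let $Z,W$ be independent standard normals and put $\hat A:=\exp(\mu_A+\sigma_A Z)$ and $\hat B:=\exp\big(\mu_B+\sigma_A Z+\sqrt{\sigma_B^2-\sigma_A^2}\,W\big)$. Then $\hat A\dis A$, and since $\sigma_A Z+\sqrt{\sigma_B^2-\sigma_A^2}\,W$ is $N(0,\sigma_B^2)$, also $\hat B\dis B$. Conditioning on $\hat A$ (equivalently, on $Z$) and using $\E\exp(\sqrt{\sigma_B^2-\sigma_A^2}\,W)=\exp(\tfrac12(\sigma_B^2-\sigma_A^2))$ gives $\E[\hat B\mid\hat A]=\exp\big(\mu_B+\sigma_A Z+\tfrac12(\sigma_B^2-\sigma_A^2)\big)=\kappa\hat A$. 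Since conditioning on $\kappa\hat A$ is the same as conditioning on $\hat A$, Lemma~\ref{condExp} applied to $\kappa A$ and $B$ yields $\kappa A\cleq B$. Because $\kappa\E A=\E B$ and the convex order is stable under adding a constant, subtracting $\E B$ from both sides gives $\kappa(A-\E A)\cleq B-\E B$.

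Next I would use the elementary fact that a mean-zero random variable $Y$ satisfies $Y\cleq\lambda Y$ for every $\lambda\ge 1$: for convex $\phi$, convexity gives $\phi(y)\le\tfrac1\lambda\phi(\lambda y)+(1-\tfrac1\lambda)\phi(0)$, and integrating, together with $\phi(0)=\phi(\E[\lambda Y])\le\E\phi(\lambda Y)$ by Jensen, gives $\E\phi(Y)\le\E\phi(\lambda Y)$. Applying this to $Y=A-\E A$ with $\lambda=\kappa$ gives $A-\E A\cleq\kappa(A-\E A)$, and chaining with the previous step, by transitivity of the convex order, $A-\E A\cleq\kappa(A-\E A)\cleq B-\E B$, i.e.\ $A\dleq B$.

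I do not expect a serious obstacle. The one point needing a little care is that the natural ``multiply by an independent Gaussian'' coupling produces $\E[\hat B\mid\hat A]=\kappa\hat A$ rather than $\hat A$, so Lemma~\ref{condExp} cannot be applied to $A$ and $B$ directly; instead one first obtains the convex comparison for the inflated variable $\kappa A$ and then absorbs the factor $\kappa$ through the mean-zero dilation inequality $Y\cleq\kappa Y$. An alternative organization, which avoids $\kappa$ but invokes transitivity twice, would be to first increase $\mu$ with $\sigma$ fixed (this is just multiplication by a constant $\ge 1$, handled by $Y\cleq\lambda Y$) and then increase $\sigma$ with $\mu$ fixed (handled by the Gaussian coupling above in the special case $\mu_A=\mu_B$).
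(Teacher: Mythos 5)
Your proof is correct and follows essentially the same route as the paper: the same Gaussian coupling $\hat B=\exp\bigl(\mu_B+\sigma_A Z+\sqrt{\sigma_B^2-\sigma_A^2}\,W\bigr)$ giving $\E[\hat B\mid\hat A]=\kappa\hat A$ with $\kappa=\exp\bigl(\mu_B-\mu_A+\tfrac12(\sigma_B^2-\sigma_A^2)\bigr)$, followed by absorbing the factor $\kappa\geq 1$ via $Y\cleq\kappa Y$ for mean-zero $Y$. The only cosmetic difference is that you prove that last scaling fact from first principles (convexity plus Jensen), whereas the paper cites Theorem 3.A.18 of Shaked and Shanthikumar.
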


\begin{proof}
Let $m_1=\E[\ln A]$, $m_2=\E[\ln B]$, $s_1^2=\Var(\ln A)$ and $s_2^2=\Var(\ln B)$. Let $Z$ and $Z'$ be two independent standard normal random variables, and let $\epsilon=\sqrt{s_2^2-s_1^2}$. 
Then
\begin{align*}
    A - \E A \dis \hat{X} &:= \exp(m_1+s_1Z)-\exp(m_1+s_1^2/2),\\
    B  - \E B \dis \hat{Y} &:= \exp(m_2+s_1Z+\epsilon Z')-\exp(m_2+(s_1^2+\epsilon^2)/2).
\end{align*}
Now note that
\begin{align*}
   \E[\hat{Y}|\hat{X}]= \E[\hat{Y}|Z] &= \E\left[\exp(m_2+s_1Z+\epsilon Z')-\exp(m_2+(s_1^2+\epsilon^2)/2)|Z\right]\\
&= \exp(m_2+\epsilon^2/2)\left(\exp(s_1Z)-\exp(s_1^2/2)\right) 
=\exp(m_2-m_1+\epsilon^2/2)\hat{X}.
\end{align*}
Thus by Lemma~\ref{condExp},
\begin{align*}
\exp(m_2-m_1+\epsilon^2/2)(A-\E A)\cleq B-\E B.
\end{align*}
(This is similar to the argument that $X'_i\cleq B_{i+1}-x_{i+1}$ in the proof of Lemma~\ref{maxLog}.)
Now we can apply Theorem 3.A.18 from \cite{Shaked}, that says that $X\dleq aX$ for $a\geq 1$, to see that
\begin{align*}
A-\E A\cleq \exp(m_2-m_1+\epsilon^2/2)(A-\E A)\cleq B-\E B.
\end{align*}
This proves that $A\dleq B$.
\end{proof}


\section{Numerical experiments for mean-based schedules}\label{MBNumerics}

{In this appendix we describe numerical experiments for exponential and lognormal service time distributions, in which the approximation ratio $\varrho_\omega$ is computed. The goal is to gain insight into what performance can be expected for the \svf{} sequence in practice. As the mean-based schedule is in place, (\ref{I}) implies that the total expected idle time is equal to $\E W_n$, the expected waiting time of the last patient.} The \svf{} rule is expected to be good for the patients in the beginning of the sequence, as then the previous patient is less variable, but might not be as good for patients at the end of the sequence, as they have high variance patients directly before them. Therefore, we expect the idle time to suffer more than the waiting times when the \svf{} sequence is used instead of the optimal sequence. This explains why we  set $\omega=1$ unless stated otherwise.

This choice of $\omega$ is further motivated by the results in \cite{Kong}. They constructed examples where \svf{} is not optimal when mean-based schedules are used, for an objective function that is a weighted average of expected waiting time and expected overtime. When waiting time and overtime had equal weight, typically more than 50 patients were needed for their counterexamples (refuting \svf{} being optimal, that is). When the overtime cost was much larger than the waiting time cost, much fewer patients were needed in creating counterexamples. 

The following proposition {reduces the complexity of finding the optimal sequence.}

\begin{proposition}\label{largestLast}
Under Assumption~$\ref{AStochDom}$ (dilation ordering assumption) and when mean-based schedules are used, there is an optimal sequence $\tau$ that satisfies $\tau(n)=n$, i.e.\ the largest variance should always be sequenced last.
\end{proposition}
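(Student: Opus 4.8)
The plan is a simple exchange argument. Take any optimal sequence $\tau$; if $\tau(n)=n$ there is nothing to prove, so suppose $\tau(n)=m$ with $m<n$, and let $p:=\tau^{-1}(n)<n$ be the slot holding the largest-variance patient. Let $\tau'$ be obtained from $\tau$ by interchanging patients $n$ and $m$ only, i.e.\ $\tau'(p)=m$, $\tau'(n)=n$, and $\tau'(i)=\tau(i)$ otherwise. I will show $\Obj{\tau'}{\bm{\mu}}{\omega}\leq \Obj{\tau}{\bm{\mu}}{\omega}$; since $\tau'(n)=n$, this proves the proposition.

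First I would reduce the cost to waiting times. Taking expectations in \eqref{I} with $\bm{x}=\bm{\mu}$ gives $\sum_{i=1}^n\E I_i=\E W_n$ for any sequence, so $\Obj{\tau}{\bm{\mu}}{\omega}=\omega\,\E W_n+(1-\omega)\sum_{i=1}^n\E W_i$, a nonnegative linear combination of the $\E W_i$ with coefficients that do not depend on the sequence. Hence it suffices to establish $\E W_i^{\tau'}\leq \E W_i^{\tau}$ for every slot $i$, where the superscript indicates the sequence used.

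The heart of the argument is the following bookkeeping together with a convexity comparison. Recall from \eqref{Wmean} that $W_i=\max\{0,S_1,\dots,S_{i-1}\}$, which depends only on the patients placed in slots $1,\dots,i-1$, each entering the random walk $(S_j)$ as an additive increment. Comparing $\tau$ with $\tau'$: if $i\leq p$ then slots $1,\dots,i-1$ are untouched and $W_i^{\tau'}=W_i^{\tau}$; if $p<i\leq n$ then the only slot among $1,\dots,i-1$ whose occupant has changed is slot $p$ (slot $n$ is not among them, even when $i=n$), so the random walks defining $W_i^{\tau}$ and $W_i^{\tau'}$ agree except that the increment contributed by slot $p$ is $\stp{n}$ in the former and $\stp{m}$ in the latter, and this increment is independent of all the others. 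As a function of that single increment $t$, $W_i$ has the form $\max\{A,\,B+t\}$ with $A,B$ not depending on $t$, hence is convex in $t$; so $\phi(t):=\E\big[W_i\mid(\text{slot-}p\text{ increment})=t\big]$ is convex. Now the dilation ordering assumption (Assumption~\ref{AStochDom}) gives $\stp{m}=B_m-\mu_m\cleq B_n-\mu_n=\stp{n}$ for $m<n$, whence $\E W_i^{\tau'}=\E\phi(\stp{m})\leq\E\phi(\stp{n})=\E W_i^{\tau}$. Summing with the sequence-independent weights from the previous paragraph yields $\Obj{\tau'}{\bm{\mu}}{\omega}\leq\Obj{\tau}{\bm{\mu}}{\omega}$, so $\tau'$ is optimal and places the largest variance last.

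The only point requiring care is the bookkeeping in the middle step: one must check that the interchange affects, for each slot $i$, at most one increment of the corresponding Lindley walk — and in particular that the last patient's waiting time $W_n$ decreases as well, which works precisely because $W_n$ sees the increments from slots $1,\dots,n-1$ but not the one from slot $n$ itself. Everything else is a direct application of the definition of the convex order, in the same spirit as the replacement argument in the proof of Lemma~\ref{OneWaitingTime}, using that the maximum of a random walk is convex in each of its increments.
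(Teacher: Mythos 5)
Your proof is correct and follows essentially the same route as the paper's: swap patient $n$ with the patient currently in the last slot, reduce the cost to a nonnegative combination of the $\E W_i$ via \eqref{I}, and use convexity of the random-walk maximum in each increment together with $X_{\tau(n)}\cleq X_n$ to show no waiting time increases. The paper states this more tersely; your explicit bookkeeping of which slot's increment changes for each $W_i$ is just a fuller writeup of the same argument.
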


\begin{proof}
Suppose $\tau$ is an optimal sequence. Note that $W_{k+1}=\max\{0,S_1,\dots,S_k\}$ is a convex function in each of the $X_i$. If $\tau(n)=n$ we are done. If $\tau(n)\neq n$, then under Assumption~\ref{AStochDom} we have $X_{\tau(n)}\leq_{\text{cx}} X_n$, and so switching the positions of patient $n$ and patient $\tau(n)$ can only decrease $\E W_{k+1}$. As this is true for all $k=1,\dots,n-1$, and the total idle time equals $\E W_n$, switching patients $\tau(n)$ and $n$ can only decrease the cost. The resulting sequence after the switch is therefore also optimal, and we are done. 
\end{proof}

This proposition still leaves $(n-1)!$ potential candidates for the optimal sequence. We are not aware of any other generally applicable structural result that can help to further reduce this number. As computing $\varrho_\omega$ thus requires checking all $(n-1)!$ potential candidates to find the optimal sequence, this severely limits the numbers of $n$ for which we can do this. 

\subsection{Exponential distributions}

We first look at the case where the service times are exponentially distributed, as this makes it possible to efficiently calculate the cost   for a given sequence and schedule \cite{Wang}. We set the service rate of patient $i$ to $n+1-i$, i.e., $\E B_i=1/(n+1-i)$. This way, the service rates are very different from one another, so that not using the optimal sequence could potentially have a large impact. The optimal sequence is found by complete enumeration over the $(n-1)!$ candidates. This allowed us to compute the approximation ratio $\varrho_1$ for problem instances with $n=3,\dots,11$ (within a reasonable amount of time). The results are given in Table \ref{MBOpt}. Choosing $\omega$ smaller than 1 resulted in smaller approximation ratios, as suspected, with \svf{} being optimal for all considered problem instances when $\omega=0.9$.

\begin{table}
\centering
\renewcommand{\arraystretch}{1.2}
\begin{tabular}{c|c|c|c|c}
$n$ & optimal sequence & optimal cost & cost for SVF & approximation ratio \\ \hline
3 & 1,2,3 & 0.2646 & 0.2646 & 1 \\
4 & 1,2,3,4 & 0.3098 & 0.3098 & 1 \\
5 & 2,1,3,4,5 & 0.3388 & 0.3389 & 1.0003 \\
6 & 3,1,2,4,5,6 & 0.3588 & 0.3590 & 1.0007 \\
7 & 4,2,1,3,5,6,7 & 0.3735 & 0.3739 & 1.0011 \\
8 & 5,3,1,2,4,6,7,8 & 0.3847 & 0.3853 & 1.0014 \\
9 & 6,4,2,1,3,5,7,8,9 & 0.3936 & 0.3943 & 1.0017 \\
10 & 7,5,3,1,2,4,6,8,9,10 & 0.4008 & 0.4015 & 1.0019 \\
11 & 8,5,3,1,2,4,6,7,9,10,11 & 0.4067 & 0.4076 & 1.0021 \\
\end{tabular}
\caption{The optimal sequence, optimal cost, cost of the SVF sequence, and approximation ratio $\varrho_1$ when patient $i$ has an exponentially distributed service time with rate $n+1-i$, and mean-based schedules are used.}\label{MBOpt}
\end{table}

For $n=9$ patients we also computed the approximation ratio for 100 random problem instances, where each service time was exponentially distributed with a service rate drawn independently from a uniform distribution on $[0,1]$. The largest approximation ratio found this way was 1.0034. For exponentially distributed service times it thus seems that the \svf{} rule will typically perform within $0.5\%$ of optimal.

From Table~\ref{MBOpt} we notice that the optimal sequence for each problem instance is ``V-shaped'': the variances first decrease, and later increase. 
We also checked that the optimal sequences for our 100 random problem instances were all V-shaped.
Restricting to V-shaped schedules allows us to consider substantially larger values of $n$.
For instances with service times $n+1-i$ for patient $i$, we obtained a bound of $1.0038$ for $n=23$, and for randomly chosen rates, found nothing worse than $1.0048$ for $n=17$.
However, we are aware of examples with other distributions for which the optimal schedule is not V-shaped, and so the true
value of $\rho_\omega$ may be slightly larger.

\subsection{Lognormal distributions}

Lognormal service times are intensively used in health care \cite{Cayirli,Klassen}. {For these distributions there is no clear efficient way to exactly compute the cost. We therefore work with its discrete counterpart, where service times are only allowed to take integer values.} The lognormal distribution for patient $i$ has parameters $m_i:=\E \ln B_i$ and $s_i^2:= \Var\ln B_i$. 
Motivated by the two lognormal distributions found to fit real data in \cite{Cayirli} (having $s_i$ equal to 0.3169 and 0.3480), in our experiments we  {have fixed} $s_i=0.33$. We set  {$m_i=\ln(50)+\ln(i)$}, where the 50 is just a scaling parameter meant to improve the discrete approximation. If the time unit would be minutes, this means we round to full minutes while all mean service times are over 50 minutes, in line with the accuracy one expects to be found in practice. 

\begin{table}
\centering
\renewcommand{\arraystretch}{1.2}
\begin{tabular}{c|c|c|c|c}
$n$ & optimal sequence & optimal cost & cost for SVF & approximation ratio \\ \hline
3 & 1,2,3 & 18.0131 & 18.0131 & 1 \\
4 & 2,1,3,4 & 32.6526 & 32.5799 & 1.0022 \\
5 & 3,1,2,4,5 & 50.3769 & 50.1484 & 1.0046 \\
6 & 4,2,1,3,5,6 & 70.8629 & 70.4453 & 1.0059 \\
7 & 5,3,1,2,4,6,7 & 93.8700 & 93.2174 & 1.0070 \\
8 & 6,4,2,1,3,5,7,8 & 119.210 & 118.302 & 1.0077 \\
9 & 7,5,3,1,2,4,6,8,9 & 146.730 & 145.538 & 1.0082 \\
10 & 8,5,3,1,2,4,6,7,9,10 & 176.305 & 174.803 & 1.0086 \\
\end{tabular}
\caption{The optimal sequence, optimal cost, cost of the SVF sequence, and approximation ratio $\varrho_1$ when patient $i$ has a service time that is a discrete approximation to a lognormal distribution with $m_i=\ln(50)+\ln(i)$ and $s_i=0.33$, and mean-based schedules are used.}\label{MBLognormal}
\end{table}

For these discrete random variables, we found the optimal sequence for problem instances with {$n=3,4,\dots,10$}. The results are reported in Table~\ref{MBLognormal}. For each of these problem instances the approximation ratio was below {1.01}. We also performed the same experiments for $s_i=0.75$ for all patients (instead of 0.33), for $n=3,\dots,10$. The approximation ratios were even smaller than in the $s_i=0.33$ case, with the largest approximation ratio being {1.0021}. When the same experiments were repeated for $\omega=0.9$, we found that the \svf{} sequence is optimal for all problem instances. The experiments are indicative of \svf{} performing quite well for lognormal distributions, but not as well as for exponential distributions.

To gain insight into the effect of the service-time distribution having a tail on either the left or the right, we also assessed the effect of replacing the service times $B_i$ by the flipped distribution $-B_i$. For $n=10$ and $s_i=0.33$ we then found an approximation ratio of $1.0345$, which, while still very close to $1$, is much larger than the $1.0086$ reported for the non-flipped lognormal distributions. 

We also performed experiments based on the lognormal distributions fitted to data in \cite{Cayirli},
distinguishing  between ``new" and ``return" patients. Lognormal distributions with parameters indicated in Table~\ref{tbl:cayirli} were found to yield a good fit to the data for these two groups. In line with \cite{Cayirli}, we consider a session consisting of 10 patients. For our numerical experiments we used discrete approximations to these two lognormal distributions, with service times being rounded to full minutes.
We computed the approximation ratio $\varrho_1$ for 11 problem instances, each with 10 patients in total with 0 up to 10 of them corresponding to  new patients (and the others to return patients). The largest approximation ratio we found was $\varrho_1=1.0023$ for 7 new patients and 3 return patients. We thus conclude that the \svf{} rule performs well for the practical distributions proposed in \cite{Cayirli}.


\section{Numerical experiments for optimally-spaced schedules}\label{OptNumerics}

Here we consider some numerical experiments when an optimally-spaced schedule is used for each sequence. Similar to Appendix~\ref{MBNumerics}, we will consider exponential service-time distributions and a discrete approximation to lognormal distributions.
In order to find the optimal schedule, we note that each waiting time is a convex function in each of the $x_i$, as by (\ref{W}) it is the maximum of linear functions in $x_i$. Note that the case $\omega=1$ is no longer interesting, as this results in an optimal schedule with $x_i=0$ for all $i$, resulting in a cost of zero for any sequence. Instead, we consider the parameter values $\omega=0.5$, $\omega=0.8$ and $\omega=0.9$.

Again we have, similar to Proposition~\ref{largestLast}, the result that the largest variance always should be sequenced last, as stated in the next proposition. This again leaves us with $(n-1)!$ candidates, and again it seems no more structural results are known, so we can only find the optimal sequence by enumerating over all $(n-1)!$ candidates.

\begin{proposition}
Under $Assumption~\ref{AStochDom}$ (dilation ordering assumption), there is an optimal combination of sequence $\tau$ and schedule $\bm{x}$ that satisfies $\tau(n)=n$, i.e.\ the largest variance should always be sequenced last.
\end{proposition}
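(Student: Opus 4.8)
The plan is to imitate the proof of Proposition~\ref{largestLast}, adapting the convexity argument to account for the fact that the schedule is now a free parameter rather than being fixed to $\bm{\mu}$. First I would take an arbitrary optimal combination $(\tau,\bm{x})$ of sequence and schedule, and assume for contradiction (or simply for the sake of improvement) that $\tau(n)\neq n$, say $\tau(m)=n$ for some $m<n$. The idea is to swap patients $n$ and $\tau(m)$ in the sequence -- moving the largest-variance patient to the last slot -- while keeping the \emph{slot lengths} $x_i$ attached to their positions; that is, the new sequence $\tau'$ uses the same interarrival vector but with the two patients interchanged. One must check that this swap does not increase the cost, so that $(\tau',\bm{x})$ is also optimal.

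The key step is the cost comparison. By \eqref{W}, each waiting time $W_{k+1}$ is the maximum of a random walk whose steps are $B_{\tau(i)}-x_{\tau(i)}$ for $i\le k$, hence a convex function of each service time $B_{\tau(i)}$ individually (it is a maximum of affine functions of that variable), with the other service times and all the $x_i$ held fixed. Now I would distinguish the slots affected by the swap. For slots $k+1$ with $k<m$, neither patient $n$ nor patient $\tau(m)$ appears among the first $k$ steps, so $W_{k+1}$ is unchanged. For slots $k+1$ with $k\ge m$, the walk defining $W_{k+1}$ contains exactly one step involving patient $n$ and, if $k=n-1$ is not yet reached in terms of the swapped-in patient, possibly one step involving $\tau(m)$; in any case, replacing the service time $B_n$ by $B_{\tau(m)}$ in one of the steps, where $B_{\tau(m)}\cleq B_n$ holds by Assumption~\ref{AStochDom} (dilation ordering implies $B_i\cleq B_j$ is \emph{not} quite right -- dilation order only gives convex order after centering), decreases the expectation of the convex functional. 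Here I should be careful: the dilation order $B_{\tau(m)}\dleq B_n$ means $B_{\tau(m)}-\mu_{\tau(m)}\cleq B_n-\mu_n$, i.e.\ the \emph{centered} variables are convexly ordered. To use this cleanly, I would rewrite each step $B_{\tau(i)}-x_{\tau(i)}$ as $(B_{\tau(i)}-\mu_{\tau(i)})+(\mu_{\tau(i)}-x_{\tau(i)})$ and treat the centered random variable $X_{\tau(i)}=B_{\tau(i)}-\mu_{\tau(i)}$ as the quantity being replaced, noting that $W_{k+1}$ is still convex in $X_{\tau(i)}$ for each fixed value of the deterministic shift. Swapping the patients in the sequence but keeping slot lengths fixed then amounts, step by step, to replacing $X_n$ by $X_{\tau(m)}$ in one step and $X_{\tau(m)}$ by $X_n$ in another; since $X_{\tau(m)}\cleq X_n$, the first replacement lowers (weakly) the expected maximum, and I must argue the second replacement -- which occurs only in the last slot $k=n-1$, where the step is the \emph{final} one of the walk -- does not spoil things.

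The main obstacle, then, is handling that last slot $k=n-1$ carefully, since moving a higher-variance patient into the final position could a priori increase $\E W_n$ (and hence, via \eqref{I}, the idle-time contribution). The resolution is the same observation used in Proposition~\ref{largestLast}: one processes the swap as a sequence of single-coordinate replacements, and after relabelling it suffices to show that the expected maximum of the random walk with steps $X_{\tau(1)},\dots,X_{\tau(n-1)}$ (in some order) is minimized, over the choice of which patient sits last, by putting the convexly-largest patient last -- but actually we do not need minimality, only that the original configuration with $n$ \emph{not} last can be matched or beaten. Concretely, reversing the roles: start from $(\tau,\bm x)$ with $\tau(m)=n$, and compare to $(\tau',\bm x)$ with the swap. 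In every slot $k+1$, the multiset of centered steps appearing changes by at most swapping one occurrence of $X_n$ for $X_{\tau(m)}$ (when $m\le k<n-1$) and nothing else; only at $k=n-1$ do both appear, and there the swap exchanges their positions within the walk, which does not change the distribution of the maximum at all since the maximum of a random walk depends only on the multiset of steps, not their order. Hence $\E W_{k+1}'\le \E W_{k+1}$ for all $k<n-1$ and $\E W_n'=\E W_n$. Since the cost is $\omega\sum\E I_i+(1-\omega)\sum\E W_i$ and, by \eqref{I}, $\sum\E I_i$ is an affine increasing function of $\E W_n$ plus terms depending only on the $x_i$ and the fixed total mean service time (which are unchanged), the total cost does not increase. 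Therefore $(\tau',\bm x)$ is optimal with the largest variance in the last slot, completing the proof.

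\begin{proof}
Let $(\tau,\bm x)$ be an optimal combination of sequence and schedule. If $\tau(n)=n$ there is nothing to prove, so suppose $\tau(m)=n$ with $m<n$. Let $\tau'$ be obtained from $\tau$ by swapping the patients in slots $m$ and $n$, i.e.\ $\tau'(m)=\tau(n)$, $\tau'(n)=n$, and $\tau'(i)=\tau(i)$ otherwise; we keep the same schedule $\bm x$. By \eqref{W}, $W_{k+1}$ is the maximum of a random walk whose steps form the multiset $\{B_{\tau(i)}-x_{\tau(i)}:1\le i\le k\}$, and this maximum is a convex function of each centered service time $X_{\tau(i)}=B_{\tau(i)}-\mu_{\tau(i)}$ when the remaining variables and the deterministic quantities are held fixed, as it is a maximum of affine functions of $X_{\tau(i)}$. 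Under Assumption~\ref{AStochDom}, $B_{\tau(n)}\dleq B_n$, i.e.\ $X_{\tau(n)}\cleq X_n$.

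For $k<m$, neither patient $n$ nor patient $\tau(n)$ lies among the first $k$ slots of either $\tau$ or $\tau'$, so $W_{k+1}$ is unchanged by the swap. For $m\le k\le n-2$, the first $k$ slots of $\tau$ contain patient $n$ but not patient $\tau(n)$, while those of $\tau'$ contain patient $\tau(n)$ but not patient $n$; thus passing from $\tau$ to $\tau'$ replaces the single step $X_n$ by $X_{\tau(n)}$. Since $X_{\tau(n)}\cleq X_n$ and $W_{k+1}$ is convex in that coordinate, $\E W'_{k+1}\le\E W_{k+1}$. For $k=n-1$, the first $n-1$ slots of both $\tau$ and $\tau'$ contain exactly the same multiset of patients (namely all patients except the one in slot $n$, i.e.\ all of $1,\dots,n-1$ in the case of $\tau'$, and all except $\tau(n)$ in the case of $\tau$), so the swap merely permutes the order of the steps; as the maximum of a random walk depends only on the multiset of its steps, $\E W'_n=\E W_n$.

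Taking expectations in \eqref{I} gives $\sum_{i=1}^n\E I_i=\sum_{i=1}^{n-1}x_{\tau(i)}+\E W_n+\E B_{\tau(n)}-\sum_{i=1}^n\E B_i$, which, since $\bm x$ is unchanged and $\sum_{i=1}^{n-1}x_{\tau(i)}$, $\sum_{i=1}^n\E B_i$ and $\sum_{i=1}^n\E I_i$ depend on $\tau$ only through $\E W_n$ and the constant $\E B_{\tau(n)}$ (and $\E B_{\tau(n)}\le\E B_n$ is not needed, but note that $\sum_i\E I_i$ does not increase since $\E W_n$ is unchanged and $\E B_{\tau(n)}$ may only decrease). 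Hence $\sum_{i=1}^n\E I_i$ does not increase. Therefore
\[
\Obj{\tau'}{\bm x}{\omega}=\omega\sum_{i=1}^n\E I'_i+(1-\omega)\sum_{i=1}^n\E W'_i\le \omega\sum_{i=1}^n\E I_i+(1-\omega)\sum_{i=1}^n\E W_i=\Obj{\tau}{\bm x}{\omega},
\]
so $(\tau',\bm x)$ is also optimal, and it satisfies $\tau'(n)=n$.
\end{proof}
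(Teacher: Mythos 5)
Your overall strategy (swap patient $n$ into the last slot and use convexity of $W_{k+1}$ in the centered service times together with $X_{\tau(n)}\cleq X_n$) is the right one, but the execution has a genuine gap: you keep the schedule $\bm{x}$ fixed, and then the convex-order comparison does not go through. Writing a step of the walk as $B_j-x_j=X_j+(\mu_j-x_j)$, the swap replaces the step $X_n+(\mu_n-x_n)$ by $X_{\tau(n)}+(\mu_{\tau(n)}-x_{\tau(n)})$. The relation $X_{\tau(n)}\cleq X_n$ only yields $\E\phi\bigl(X_{\tau(n)}+c\bigr)\le\E\phi\bigl(X_n+c\bigr)$ for the \emph{same} deterministic shift $c$; here the shift changes from $\mu_n-x_n$ to $\mu_{\tau(n)}-x_{\tau(n)}$, and since no ordering of the means or of the $x_j$ is assumed, nothing can be concluded about $\E W'_{k+1}$ versus $\E W_{k+1}$. (You flag exactly this issue in your plan, but the written proof then ignores it.) The fix --- and the one step of the paper's argument that yours is missing --- is to modify the schedule along with the sequence: give patient $\tau(n)$ the new slot length $y_{\tau(n)}=x_n+\mu_{\tau(n)}-\mu_n$, keeping $y_i=x_i$ otherwise, so that the new step $B_{\tau(n)}-y_{\tau(n)}=X_{\tau(n)}+(\mu_n-x_n)$ has the identical deterministic part as the old one and the convexity argument applies in every slot.

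Two further statements in your proof are incorrect as written. First, for $k=n-1$ the multisets of patients occupying the first $n-1$ slots under $\tau$ and $\tau'$ are \emph{not} equal: under $\tau$ the missing patient is $\tau(n)$ (so patient $n$ does appear among the first $n-1$ steps), while under $\tau'$ the missing patient is $n$. Your parenthetical in fact describes two different sets while calling them the same; the conclusion $\E W'_n=\E W_n$ is false in general, and what one actually obtains (after the schedule correction) is $\E W'_n\le\E W_n$, which suffices. Second, the idle-time accounting: with $\bm{x}$ unchanged, \eqref{I} gives $\sum_i\E I'_i-\sum_i\E I_i=(x_{\tau(n)}-x_n)+(\mu_n-\mu_{\tau(n)})+(\E W'_n-\E W_n)$, and the first two terms have no definite sign, so the claim that the total idle time cannot increase is unsupported. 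With the corrected schedule $\bm{y}$ these deterministic terms cancel exactly, the idle time changes only through $\E W_n$, and the argument closes as in the paper.
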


\begin{proof}
Suppose $\tau$ and $\bm{x}$ are an optimal combination of sequence and schedule. If $\tau(n)=n$ we are done. If $\tau(n)\neq n$, then under Assumption~\ref{AStochDom} we have $B_{\tau(n)}-\E B_{\tau(n)}\leq_{\text{cx}} B_n-\E B_n$. We then propose a new sequence $\tau'$ with patients $\tau(n)$ and $n$ switched (so now $\tau'(n)=n$), and a new schedule $\bm{y}$ with $y_{\tau(n)} = x_n+\E B_{\tau(n)}-\E B_n$ and $y_i=x_i$ for $i\neq \tau(n)$. In the expression (\ref{W}) for $W_{k+1}$ this change effectively makes it so that each term $B_n-\E B_n$ gets replaced by $B_{\tau(n)}-\E B_{\tau(n)}$, and as $W_{k+1}$ is a convex function in this expression we see that this change cannot increase the $\E W_{k+1}$. By expression (\ref{I}) we see that the expected idle time only changes through the term $\E W_n$, and so the total cost will not be increased after the change. Therefore the sequence $\tau'$ with schedule $\bm{y}$ is also optimal, and as $\tau'(n)=n$ we are done.
\end{proof}

\subsection{Exponential distributions}

Suppose that, as  in Appendix~\ref{MBNumerics}, the service time of patient $i$ is exponentially distributed with rate $n+1-i$. The exponential service times allow us to efficiently compute the cost for any given schedule, and the convexity of the cost in each of the interarrival times $x_i$ then allows us to compute the optimal schedule. Due to the extra complexity involved in computing the optimal schedule for each possible sequence, we only computed the approximation ratio for $n=3,\dots,8$, for each of mentioned parameter values of $\omega$. For each of these problem instances, we found that the \svf{} sequence is optimal.

\subsection{Lognormal distributions}

For lognormal distributions, we will use the same discrete approximation as was used in Appendix~\ref{MBNumerics}. For patient $i$, the underlying lognormal distribution is set to have {$m_i:=\E \ln B_i =\ln(20)+\ln(i)$}, the {20} again being a scaling parameter.  
We need to restrict ourselves to schedules with integer-valued interarrival times in order to be able to effectively compute the cost function for a given schedule. Fortunately, this class of schedules is known to contain the optimal schedule, and the cost function of the resulting discrete optimization problem is L-convex \cite{Begen}. We can thus use the {\sc odicon} solver \cite{Tsuchimura}, that can minimize L-convex functions, to find the optimal schedule for each sequence.
{For $s_i=0.33$ ($s_i$ being the standard deviation of $\ln B_i$), we computed the approximation ratio for $n=3,\dots,7$, for each of the mentioned values of $\omega$. When $s_i=0.75$, we did the same for $n=3,\dots,6$.} For all considered problem instances, we found that the \svf{} sequence is optimal. 

\begin{remark}
    The method to find the optimal interarrival times described here, with the use of {\sc odicon}, works for any discrete service-time distribution. It was also used to find the optimal interarrival times in Example~\ref{OptimalCounterExample}.
\end{remark}

\end{document}